\documentclass[10pt,reqno]{amsart}
\usepackage[T1]{fontenc}
\usepackage{lmodern}
\usepackage{amsmath,amssymb,amsthm,mathtools}
\usepackage[a4paper,margin=30mm]{geometry}
\usepackage{microtype}
\usepackage{enumitem}
\usepackage{xcolor}
\usepackage{hyperref}
\hypersetup{colorlinks=true,linkcolor=blue!50!black,citecolor=blue!50!black,urlcolor=blue!50!black,pdftitle={Harmonic Torsion of Algebraic Generalized Ricci Solitons and the Structure of Solvable Extensions},pdfauthor={Huibin Chen, Zhiqi Chen, Fuhai Zhu}}
\newcommand{\R}{\mathbb R}
\newcommand{\g}{\mathfrak g}
\newcommand{\n}{\mathfrak n}
\newcommand{\af}{\mathfrak a}
\newcommand{\h}{\mathfrak h}
\newcommand{\Ecal}{\mathcal E}
\newcommand{\Fcal}{\mathcal F}

\DeclareMathOperator{\Ric}{Ric}
\DeclareMathOperator{\scal}{scal}

\DeclareMathOperator{\tr}{tr}
\DeclareMathOperator{\ad}{ad}
\DeclareMathOperator{\Der}{Der}
\DeclareMathOperator{\Aut}{Aut}
\DeclareMathOperator{\Sym}{Sym}
\DeclareMathOperator{\Skew}{Skew}

\DeclareMathOperator{\pr}{pr}
\newtheorem{maintheorem}{Theorem}

\newtheorem{theorem}{Theorem}[section]
\newtheorem{proposition}[theorem]{Proposition}
\newtheorem{lemma}[theorem]{Lemma}
\newtheorem{corollary}[theorem]{Corollary}
\theoremstyle{definition}

\newtheorem{example}[theorem]{Example}
\theoremstyle{remark}
\newtheorem{remark}[theorem]{Remark}
\numberwithin{equation}{section}
\allowdisplaybreaks[2]
\title[Harmonic torsion and solvable extensions]{Harmonic Torsion of Algebraic Generalized Ricci Solitons and the Structure of Solvable Extensions}
\author{Huibin Chen}
\address{Institute of Mathematics, School of Mathematical Sciences, Nanjing Normal University, Nanjing 210023, P.R. China}
\email{chenhuibin@njnu.edu.cn}
\author{Zhiqi Chen}
\address{School of Mathematics and Statistics, Guangdong University of Technology, Guangzhou 510520, P.R. China}
\email{chenzhiqi@gdut.edu.cn}
\author{Fuhai Zhu}
\address{Department of Mathematics, Nanjing University, Nanjing 210023, P.R. China}
\email{zhufuhai@nju.edu.cn}
\date{}
\subjclass[2020]{Primary 53E20. Secondary 22E25, 53C30, 14L24.}
\keywords{Generalized Ricci soliton, harmonic torsion, solvable Lie group, quadratic Lie algebra, moment map}
\begin{document}
\begin{abstract}
We prove harmonicity for nonshrinking algebraic generalized Ricci solitons and for all algebraic generalized Ricci solitons on solvable Lie algebras. In the solvable setting, we characterize expanding solitons by normal extensions of generalized nilsolitons, and derive a sharp rank bound.
\end{abstract}
\maketitle
\tableofcontents

\section{Introduction}\label{sec:introduction}
Generalized geometry studies structures on $TM\oplus T^*M$ together with its natural pairing of split signature. Courant's Dirac structures \cite{Co1990}, the Lie bialgebroids of Liu, Weinstein and Xu \cite{LiWeXu1997}, and the generalized complex structures introduced by Hitchin and Gualtieri \cite{Hi2003,Gu2011} provide the basic framework. For Riemannian questions, an exact Courant algebroid is described in an isotropic splitting by a closed three-form $H_0$. Its bracket is
\begin{equation}\label{eq:intro-dorfman}
[X+\xi,Y+\zeta]_{H_0}
=[X,Y]+\mathcal L_X\zeta-\iota_Yd\xi+H_0(X,Y,\cdot).
\end{equation}
The isomorphism class is determined by $[H_0]\in H^3_{\mathrm{dR}}(M,\R)$, and a change of splitting is given by a two-form \cite{SeWe2001,Se2017}. A generalized metric is represented by a Riemannian metric $g$ and a two-form $b$. The three-form entering its curvature is $H=H_0+db$. Reduction and automorphisms of these structures are studied in \cite{BuCaGu2007,RuTi2020}.

The generalized Ricci flow evolves a metric and a closed three-form by
\begin{equation}\label{eq:intro-grf}
\left\{
\begin{aligned}
\frac{\partial g}{\partial t}&=-2\Ric_g+\tfrac12H_g^2,\\
\frac{\partial H}{\partial t}&=-d\delta_gH,\\
dH&=0,
\end{aligned}
\right.
\end{equation}
where $H_g^2(X,Y)=\sum_{i,j}H(X,e_i,e_j)H(Y,e_i,e_j)$ and $\delta_g$ is the geometric codifferential. It reduces to Hamilton's Ricci flow when $H=0$ \cite{Ha1982}. The coupled system arises in the one-loop renormalization of nonlinear sigma models with a $B$-field. Oliynyk, Suneeta and Woolgar gave a variational formulation in \cite{OlSuWo2006}. Streets developed regularity and entropy methods in \cite{St2008,St2023}, and Li obtained curvature estimates in \cite{Li2012}. Pluriclosed flow, introduced by Streets and Tian \cite{StTi2010}, gives a Hermitian source of this system through the gauge equivalence in \cite{StTi2013}. Its relation with generalized K\"ahler geometry is developed in \cite{StTi2012,Gu2014,GaJoSt2023}. Generalized Ricci curvature on Courant algebroids and its compatibility with duality are studied in \cite{St2017,SeVa2017,Ga2019,SeVa2020}. The recent work \cite{StStVa2026} establishes short-time existence for generalized metrics coupled to half-densities over compact manifolds. We refer to \cite{GaSt2021} for a systematic account of the exact Courant-algebroid theory.

Stationary solutions lead to Bismut--Ricci-flat geometry. For the metric connections $\nabla^\pm=\nabla^g\pm H^\sharp/2$, with $g(H^\sharp(X,Y),Z)=H(X,Y,Z)$, the full Ricci tensors are
\begin{equation}\label{eq:intro-bismut-ricci}
\Ric^{\nabla^\pm}=\Ric_g-\tfrac14H_g^2\mp\tfrac12\delta_gH.
\end{equation}
Bismut's Hermitian connection \cite{Bi1989} is an important instance of this construction. A closed pair is called Bismut--Ricci-flat, or BRF, when $\Ric_g=H_g^2/4$ and $\delta_gH=0$. Compact Lie groups with a bi-invariant metric and its Cartan three-form give basic examples. The distinction between Bismut flatness and Ricci flatness is illustrated by \cite{WaYaZh2020,PoRa2023}. Further compact homogeneous BRF pairs are constructed in \cite{PoRa2024,LaWi2023a}, with the correction in \cite{LaWi2025}. The associated invariant harmonic forms are studied in \cite{LaWi2023b}. Stability and deformation questions are addressed in \cite{RaVe2021,Le2024,Le2025}. On Lie groups, generalized Einstein classifications also depend on the divergence chosen in the Courant description \cite{CoKr2023,CoFrGa2024}. In particular, \cite[Corollary~3.3]{CoFrGa2024} gives torsion vanishing and flatness in the Riemannian solvable case under its generalized Einstein hypotheses.

For a homogeneous flow, varying the bracket while keeping the inner product fixed converts the evolution into a finite-dimensional orbit problem. Lauret's work on nilsolitons and moment maps \cite{La2001,La2003,La2013}, Heber's Einstein structure theory \cite{He1998}, and the structure theory of solvsolitons \cite{La2010,La2011} explain the role of the nilradical in this method. Lafuente and Lauret established corresponding structural results for homogeneous Ricci solitons \cite{LaLa2014}. In the generalized setting, Arroyo and Lafuente studied homogeneous pluriclosed flow \cite{ArLa2019}, and Paradiso formulated a bracket flow for nilpotent generalized Ricci flow \cite{Pa2021}. Fusi, Lafuente and Stanfield use the group $\mathrm{GL}(\g)\ltimes\Lambda^2\g^*$ to describe arbitrary invariant generalized metrics \cite{FuLaSt2024}. Their results include global existence on solvmanifolds and convergence of rescaled nilpotent solutions with harmonic initial torsion to expanding generalized solitons. Their second paper \cite{FuLaSt2026} uses the associated quadratic Lie algebra to obtain further convergence results under a nonnegative Killing-form assumption.

Generalized Ricci solitons are the self-similar solutions of \eqref{eq:intro-grf}. We use simultaneous scaling of the metric and three-form, as in \cite{FuLaSt2024,PoRa2025}.  The rotationally symmetric steady examples in \cite{PoRa2025} illustrate the use of ordinary differential equations outside the homogeneous setting. Here the self-similarity is generated by automorphisms. On a finite-dimensional real metric Lie algebra with positive-definite inner product, our algebraic system is
\begin{equation}\label{eq:soliton}
\left\{
\begin{aligned}
&R:=\Ric-\tfrac14H^2=\lambda I+D,\\
&d\delta H=2\lambda H-\rho(D)H,\qquad D=D^*\in\Der(\g),\\
&dH=0.
\end{aligned}
\right.
\end{equation}
The action $\rho$ is negative insertion, as specified in \eqref{eq:form-action}. The solution is expanding, steady or shrinking according as $\lambda<0$, $\lambda=0$ or $\lambda>0$. This is the algebraic formulation of \cite[Definition~7.3 and Proposition~7.4]{FuLaSt2024}. 

The torsion is harmonic if $dH=0$ and $\delta H=0$. On invariant forms, $\delta=d^\dagger+\iota_U$, where $d^\dagger$ is the exterior adjoint of the Chevalley--Eilenberg differential and $\langle U,X\rangle=\tr\ad_X$. Thus coclosedness requires care even at the algebraic level. Chen, Chen and Zhu proved harmonicity for all generalized nilsolitons in \cite{ChChZh2026a}, answering \cite[Question~7.12]{FuLaSt2024}. Their subsequent defect identity \cite{ChChZh2026b} gives, with $\eta=\delta H$ and $\eta(X,Y)=\langle K_\eta X,Y\rangle$,
\begin{equation}\label{eq:intro-killing-obstruction}
\tr(K_\eta^*K_\eta B)\ge0\quad\Longrightarrow\quad\eta=0,
\end{equation}
where $B$ is the Killing operator. Fusi, Lafuente and Stanfield prove harmonicity in the unimodular nonnegative-Killing case in \cite[Corollary~B]{FuLaSt2026}.   Our first result removes this Killing sign restriction for nonshrinking solitons.

\begin{maintheorem}\label{thm:harmonicity}
Let $\g$ be a finite-dimensional real metric Lie algebra, and suppose that $H$, $\lambda$ and $D$ satisfy \eqref{eq:soliton}. If $\lambda\le0$, then $\delta H=0$. If $\lambda=0$, then also $\Ric=H^2/4$ and $D=0$. In particular, every expanding or steady algebraic generalized Ricci soliton has harmonic torsion. Every non-unimodular algebraic generalized Ricci soliton is expanding and hence has harmonic torsion. If $\g$ is solvable, harmonicity holds for every value of $\lambda$.
\end{maintheorem}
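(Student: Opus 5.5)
The plan is to pair the second soliton equation with $H$ and extract an integral-type identity on the finite-dimensional space of invariant forms. Write $\eta=\delta H$. Since $dH=0$ and $\delta=d^\dagger+\iota_U$, we have
\[
\langle d\delta H,H\rangle=\langle \delta H, d^\dagger H\rangle=|\eta|^2-\langle \eta,\iota_UH\rangle .
\]
On the other side, $\langle 2\lambda H-\rho(D)H,H\rangle=2\lambda|H|^2-\langle\rho(D)H,H\rangle$. For symmetric $D$, the term $\langle\rho(D)H,H\rangle$ equals $-3\tr(D\,\tfrac{1}{6}H^2)$ up to the normalization of $H^2$. I would then trace the first equation against $D$ and against $I$, using the standard identities for metric Lie algebras:
\[
\tr(\Ric\,D)=-\tfrac14\tr(D\,B)\text{-type terms},\qquad \tr\Ric=\scal .
\]
For a symmetric derivation $D$ there is Lauret's identity $\tr(\Ric_{\n\text{-part}}D)=\dots$. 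More robustly, I would use the moment-map identity $\langle \Ric, D\rangle=-\tfrac14|[\cdot,\cdot]\text{ derivative}|$-type formula: since $D$ is a derivation, $D\cdot\mu=0$, so the moment-map part drops out. This leaves $\tr(\Ric D)=-\tfrac12\tr(S(\ad_U)D)-\tfrac14\tr(BD)$ with $B$ the Killing form. The aim is to combine these into an identity of the form
\[
|\eta|^2+(\text{nonnegative terms})=\lambda\cdot(\text{positive quantity}),
\]
for instance $\lambda\,(|H|^2+\tr D+\dots)$.

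A cleaner route uses the defect identity \eqref{eq:intro-killing-obstruction} of \cite{ChChZh2026b}, which I assume is available in the form
\[
c\,|\eta|^2+\tr(K_\eta^*K_\eta B)=\lambda\cdot Q
\]
with $Q\ge0$. Nonshrinking then reduces to controlling the Killing term. The key steps, in order, are as follows.
\begin{enumerate}
\item Show that $U$ is an eigen-direction in the sense $\tr(D\ad_U)$-relations: from $R=\lambda I+D$ and $\tr(\Ric\,\ad_X)$ for derivations one gets $\tr D\,\ad_U\ge 0$ and $\lambda|U|^2=-\tr(S(\ad_U)^2)-\dots$. This proves that $U\neq0$ forces $\lambda<0$, which gives the non-unimodular claim.
\item Apply $\iota_U$ and $d$ to the second equation and use $d\eta=\mathcal L$-type identities to get $\langle\eta,\iota_UH\rangle$ in terms of $|\eta|^2$ and $\lambda$.
\item Sum these to obtain $|\eta|^2\le\lambda\cdot(\ge0)$, so $\lambda\le0$ forces $\eta=0$.
\item When $\lambda=0$, the same identity forces the remaining nonnegative terms to vanish. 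These are $\tr D^2$ (from $\tr(RD)=\tr D^2$ combined with $\tr(\Ric D)\le \tfrac14\tr(H^2D)$), hence $D=0$ and $\Ric=H^2/4$.
\end{enumerate}

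For the solvable case with $\lambda>0$, note that a solvable $\g$ has $B$ vanishing on the nilradical and $B\le0$ is not automatic. Instead I would show that a solvable algebra carries no shrinking soliton at all: $\scal\le0$ on solvable metric Lie algebras (with equality only if flat), and $\tr H^2\ge0$. Hence $\tr R=n\lambda+\tr D\le0$. Combined with $\tr D=\tr(D\cdot I)$, and since $\tr(RD)=\tr D^2\ge0$ forces $\tr D\le0$ via the nilradical argument, this yields $\lambda\le0$ unless everything vanishes. The second part of the theorem then applies.

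The main obstacle is step 3: identifying the exact sign of the Killing contribution. I expect to handle it by decomposing $\g=\mathfrak{s}\ltimes\mathfrak{r}$ (Levi) and showing that the harmful negative-Killing part of $\tr(K_\eta^*K_\eta B)$ is absorbed by $\lambda|H|^2$ exactly when $\lambda\le0$.
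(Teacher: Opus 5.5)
There is a genuine gap: your step~3 fails, and with it the whole expanding case.

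\textbf{What works.} Steps 1, 2 and 4 can be made to work, and they are essentially what the paper does in Proposition~\ref{prop:nonunimodular-expanding} and Lemma~\ref{lem:invariant-scalar}. Pairing the metric equation with $U\otimes U$, using $DU=0$ rather than a sign of $\tr(D\ad_U)$, gives $\lambda|U|^2=-\|\Sym(\ad_U)\|^2-\frac14|\iota_UH|^2$. Pairing with $\ad_U$, $D$ and $I$, and pairing the torsion equation with $H$, yields the modular cancellation $4\tr(\Ric D)+\langle\iota_UH,\eta\rangle=0$. These identities then close up to
\[
\|D\|^2+\tfrac14|\eta|^2=\lambda\Bigl(\tfrac16|H|^2-\tr D\Bigr),
\]
equivalently $2\lambda(\scal-\frac1{12}|H|^2)=2\|R\|^2+\frac12|\eta|^2$.

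\textbf{Why step 3 fails.} The identity above has exactly the shape you want, but the coefficient of $\lambda$ has no favorable sign. For $\lambda<0$ it only says $\tr D\ge\frac16|H|^2$ and does not force $\eta=0$. In Example~\ref{ex:volume-extension} the coefficient is $-8p^2$. So these trace identities settle only $\lambda=0$ and the non-unimodular claim.

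Your fallback does not repair this. The criterion \eqref{eq:killing-criterion} is a $\lambda$-independent implication, so even granting your assumed form, everything hinges on the sign of $\tr(K_\eta^*K_\eta B)$. That sign is genuinely indefinite. $B$ can be negative on $\n^\perp$ even when $\g$ is solvable, where the Levi factor is zero; see Examples~\ref{ex:rotating-einstein} and~\ref{ex:rotating-nonzero-torsion} with $|b|>a$. A Levi decomposition therefore cannot supply the missing control.

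\textbf{The missing idea.} You need a mechanism forcing $\eta(\n^\perp,\g)=0$. Then $K_\eta$ kills $\af=\n^\perp$, while $B\n=0$ and $B=B^*$ put the image of $B$ in $\af$. Hence $K_\eta B=0$ and the criterion applies. The paper obtains this on the twisted cotangent algebra $\h=\g\oplus\g^*$ as follows.
\begin{enumerate}
\item The torsion equation says exactly that $L=\begin{pmatrix}D&0\\-K_\eta&2cI-D\end{pmatrix}$ is a derivation, with $\Sym L-cI$ the generalized Ricci operator.
\item Real GIT for $\mathrm O(q)$ on the nilpotent quadratic quotient $W=(\n\oplus\g^*)/\af^*$ gives a label $\beta$ with $C=\beta+sI>0$. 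Hence $\Ecal=\operatorname{diag}(0,C,2sI)\ge0$.
\item Moment-map orthogonality to derivations, and $\tr(BA)=0$ for $A\in\Der(\g)$, give $0=\|\Fcal-\frac cs\Ecal\|^2+\frac{2c}{3s}\tr(M_\nu\Ecal)$.
\item A five-component Cartan decomposition shows $\tr(M_\nu\Ecal)\ge0$.
\item Reading $\Fcal=\frac cs\Ecal$ on $\af$ in the lower-left block, together with $H(\af,\af,\g)=0$ from the equality case, yields $\eta(\af,\g)=0$.
\end{enumerate}

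\textbf{The solvable case.} Your solvable argument is also flawed. Shrinking solitons do exist on abelian algebras, with $H=0$ and $D=-\lambda I$. The identity $\tr(RD)=\tr D^2$ holds only at $\lambda=0$. Also $\scal\le0$ gives only $(\dim\g)\lambda+\tr D\le0$, and $\tr D$ has no sign. The correct split is as follows. If $\g$ is nilpotent, then $B=0$ and the criterion gives $\eta=0$ for every $\lambda$. Otherwise any $0\ne X\in\n^\perp$ satisfies $DX=0$ and $X\perp[\g,\g]$. Hence $\lambda|X|^2=-\|\Sym(\ad_X)\|^2-\frac14|\iota_XH|^2\le0$, which reduces to the nonshrinking case.
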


For expanding solitons, the proof also shows that $\af=\n^\perp$ is a reductive subalgebra and $H(\af,\af,\g)=0$, where $\n$ is the nilradical. The steady assertion follows already from the scalar evolution in \cite[Section~6, equation~(44)]{FuLaSt2024}, following \cite{St2023}. The proof combines elementary algebraic identities with real geometric invariant theory on a nilpotent quadratic quotient.

On solvable Lie algebras, the reductive complement is abelian, and the remaining horizontal torsion also vanishes. This gives both structure and reconstruction.

\begin{maintheorem}\label{thm:structure}
Let $\g$ be a solvable metric Lie algebra satisfying \eqref{eq:soliton}, and set $\af=\n^\perp$. Then
\begin{equation}\label{eq:main-structure}
[\af,\af]=0,\qquad H(\af,\g,\g)=0,\qquad
[\ad_A,(\ad_A)^*]=0\quad(A\in\af).
\end{equation}
If the soliton is expanding, its restriction to $\n$ is a generalized nilsoliton with the same constant. Conversely, all expanding solvable solutions arise by extending generalized nilsoliton data through an abelian subalgebra $\af\subset\Der(\n)$ of normal operators preserving $H$, for which $A\mapsto S(A):=\Sym A$ is injective. The horizontal metric is given by $\langle A,B\rangle=-\lambda^{-1}\tr(S(A)S(B))$.
\end{maintheorem}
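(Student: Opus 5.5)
We plan to take Theorem~\ref{thm:harmonicity} as the starting point. On a solvable $\g$ it gives $\delta H=0$ for every $\lambda$, so the soliton system reduces to
\[
\Ric-\tfrac14H^2=\lambda I+D,\qquad \rho(D)H=2\lambda H,\qquad dH=0,\qquad \delta H=0 .
\]
The first task is to rule out, or absorb, the cases $\lambda\ge 0$. If $\lambda=0$, Theorem~\ref{thm:harmonicity} gives $D=0$ and $\Ric=H^2/4$. If $\lambda>0$, the non-unimodular part of Theorem~\ref{thm:harmonicity} forces $\g$ to be unimodular. For unimodular solvable $\g$ we plan to trace the Ricci equation against the mean curvature vector and against $D$. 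The standard identity $\tr(\Ric\,D)=-\tfrac14\tr(D\,\beta)\ge\ldots$ for derivations, together with the invariance of $H$ under $D$, should give $\lambda\le 0$ unless the data are trivial. In that degenerate case the structure claims \eqref{eq:main-structure} hold vacuously or reduce to the steady case. So the core case is $\lambda<0$.

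For the expanding case we use the by-product recorded after Theorem~\ref{thm:harmonicity}: $\af=\n^\perp$ is a reductive subalgebra and $H(\af,\af,\g)=0$. Since $\g$ is solvable, a reductive subalgebra of $\g$ acts semisimply on $\g$, and a reductive solvable Lie algebra is abelian. Its derived algebra is semisimple and solvable, hence zero. This gives $[\af,\af]=0$.

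For normality we use the standard solvsoliton argument in the style of \cite{La2011}. Evaluate $\tr\big((\Ric-\tfrac14H^2)\,[\ad_A,\ad_A^*]\big)$ in two ways, once through the Ricci formula on $\n\rtimes\af$ and once through $\lambda I+D$. The second evaluation vanishes because $[\ad_A,\ad_A^*]$ has trace zero and the derivation term drops out by $D$-invariance. The first evaluation produces $-\tfrac12\|[\ad_A,\ad_A^*]\|^2$ plus an $H$-contribution. This $H$-contribution is where we need $H(\af,\g,\g)=0$, so the two claims have to be proved together. Coclosedness supplies the missing identity: $\delta H=0$ together with $\iota_U$ for $U\in\af$ gives $d^\dagger H=-\iota_UH$.

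To get $H(\af,\g,\g)=0$, we pair $d H=0$ with $\iota_A H$ for $A\in\af$. Then $\|\iota_AH\|^2$ becomes an expression in $\rho(\ad_A)H$ and $\rho(\ad_A^*)H$. The Ricci equation restricted to $\af\times\af$ gives $\langle RA,A\rangle=\lambda|A|^2$, and the standard formula $\Ric(A,A)=-\tr S(\ad_A)^2$ then forces $\tfrac14 H^2(A,A)$ to be absorbed. That yields $\iota_AH=0$ and the metric identity $\langle A,B\rangle=-\lambda^{-1}\tr(S(A)S(B))$ simultaneously. Here $D|_\af=0$ because $D$ preserves $\n$ and is symmetric. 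This coupled step, getting normality and the vanishing of the horizontal torsion at once, is the main obstacle.

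Once these hold, the restriction to $\n$ follows. The Ricci and torsion terms of $\n$ differ from those of $\g$ by $S(\ad_A)^2$-type terms, which are absorbed into a new symmetric derivation $D_\n=D|_\n+\sum_i S(\ad_{A_i})$. Hence $\n$ is a generalized nilsoliton with the same $\lambda$. Injectivity of $A\mapsto S(A)$ holds because a normal skew derivation in the kernel would commute with everything and enlarge the nilradical. For the converse we reverse the computations. Take nilsoliton data, an abelian algebra of normal derivations preserving $H$, and the prescribed inner product; these verify \eqref{eq:soliton} directly.
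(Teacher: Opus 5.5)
\textbf{Main gap: the coupled step.} The step you flag as the main obstacle does not go through as proposed. The Ricci equation on $\af\times\af$ is exactly \eqref{eq:horizontal}, $\lambda|A|^2=-\|\Sym(\ad_A)\|^2-\tfrac14|\iota_AH|^2$. This is a single relation between two nonnegative quantities. It gives $\iota_AH=0$ only if you already know $-\lambda|A|^2=\|\Sym(\ad_A)\|^2$, which is the metric identity you want to extract from it, so the argument is circular.

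Pairing $d(\iota_AH)=\rho(\ad_A)H$ with $H$ and using $d^\dagger H=-\iota_UH$ only yields $\tr(H^2\ad_A)=\langle\iota_AH,\iota_UH\rangle$. That does not control $|\iota_AH|^2$.

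Nor does the commutator computation close on $\g$ itself. Take $V=\ad_A$ and $Q=[V,V^*]$. On one side, $\tr(RQ)=\tr(\ad_{DA}V^*)=0$. On the other side, use the moment identity, $\tr(BQ)=\tr(\ad_UQ)=0$ (from $[\af,\af]=0$), $\rho(V)^*=\rho(V^*)$, and Cartan's formula $\rho(V)H=d(\iota_AH)$. These give
\[
0=\tfrac14\|\Theta(V^*)\mu\|^2+\tfrac1{12}\|\rho(V^*)H\|^2-\tfrac1{12}\|d(\iota_AH)\|^2 .
\]
This is not $-\tfrac12\|[\ad_A,\ad_A^*]\|^2$ plus a torsion term. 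The last summand has the wrong sign, and coclosedness says nothing about it.

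The missing idea is to run the commutator argument on the twisted cotangent algebra $\h=\g\oplus\g^*$, where the torsion is part of the bracket $\nu$. There $T=\ad^\nu_A$ is a $q$-skew derivation of $\nu$ with diagonal blocks $V,-V^*$ and lower-left block $K_{\iota_AH}$, so the defect above never appears. Compute $\tr(P[T,T^*])$ once from $P=P_q(\tfrac23M_\nu-\tfrac12B_\nu)-\Sym(\ad^\nu_U)$ and once from $P=\Sym L-cI$. This gives $\tfrac16\|\Theta(T^*)\nu\|^2+2|\iota_A\eta|^2=0$ (Lemma~\ref{lem:courant-normality}). Hence $T^*$ is a derivation with $T^*A=0$, so $[T,T^*]=0$. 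The trace of its $\g\g$-block is $\tr([V,V^*]-K_{\iota_AH}^*K_{\iota_AH})=-\|K_{\iota_AH}\|^2$, which forces $\iota_AH=0$ and then $[V,V^*]=0$. Only after this does the $\af\times\af$ equation yield the metric formula.

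\textbf{Smaller points.}
\begin{itemize}
\item The shrinking case needs no trace argument against $U$ or $D$. Your $\beta$ is undefined there, and $U=0$ in the unimodular case anyway. Instead, \eqref{eq:horizontal} gives $\lambda\le0$ as soon as $\af\ne0$. If $\af=0$, then $\g$ is nilpotent and \eqref{eq:main-structure} is vacuous.
\item The steady case is never actually treated. It follows from \eqref{eq:horizontal} with $\lambda=0$, which gives $\Sym(\ad_A)=0$ and $\iota_AH=0$. Then $\ad_A\af=-\ad_A^*\af=0$, because $\ad_A\g\subset\n$.
\item Your injectivity argument for $A\mapsto S(A)$ is wrong. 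A nonzero skew action does not enlarge the nilradical: $\mathfrak e(2)$, i.e.\ Example~\ref{ex:rotating-einstein} with $a=0$, has nilradical $\mathfrak v$ and $S(A)=0$. Injectivity comes from $-\lambda|A|^2=\|S(A)\|^2$ with $\lambda<0$.
\item The induced derivation is $D_\n=D|_\n+S(U)$, with $U$ the mean-curvature vector, since $\Ric_\g|_\n=\Ric_\n-S(U)$ once normality holds. It is not $D|_\n+\sum_iS(\ad_{A_i})$.
\item Both for this and for the converse, you need that a normal derivation preserving $H$ has its adjoint again a derivation preserving $H$, commuting with $R_\n$ (Lemma~\ref{lem:normal-stabilizer}). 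Without this, $S(U)$ and $D_\g=0\oplus(D_\n-S(U))$ need not be derivations, so ``reversing the computations'' does not yet verify \eqref{eq:soliton}.
\end{itemize}
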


The precise reconstruction, including the mean-curvature correction to the soliton derivation, is stated in Theorem~\ref{thm:extension}. It extends the classical normal construction of \cite[Theorem~4.8]{La2011}. Normality permits skew-symmetric components, so complete solvability is not a hypothesis. Mucciante proves the expanding sign for solvable solitons with nonzero torsion in \cite[Theorem~4.2]{Mu2026}, and treats specified unimodular codimension-one extensions with abelian nilradical in Propositions~4.4--4.5. Theorem~\ref{thm:structure} allows arbitrary solvable nilradicals. Corollary~\ref{cor:expanding-solvable} also includes the zero-torsion boundary. A solvable solution with nonzero torsion or nonflat metric is expanding, every steady solution is flat with zero torsion, and a shrinking solution has abelian underlying algebra and zero torsion.

We describe the main tools and their relation to earlier work. The formulation of the soliton equations on the twisted cotangent Lie algebra $\g\oplus\g^*$ comes from \cite[Proposition~7.4 and Lemma~7.6]{FuLaSt2024}. We use the expression for the generalized Ricci operator in \cite[Theorem~3.1, equation~(18)]{FuLaSt2026}, with its normalization verified in Proposition~\ref{prop:cotangent-curvature}. The trace identities used below extend arguments for classical homogeneous Ricci solitons in \cite[Lemmas~2.3, 2.6 and~4.2, Remark~2.7 and Proposition~4.1]{LaLa2014}. Our main step is to construct a positive-definite operator on the nilpotent quadratic quotient
\begin{equation}\label{eq:introduction-quotient}
W=(\n\oplus\g^*)/\operatorname{ann}(\n).
\end{equation}
We retain the block determined by $\eta=\delta H$ until the equality argument is complete. The quotient is necessary because the corresponding subspace $\n\oplus\n^*$ need not be a Lie subalgebra.

The real geometric invariant theory of B\"ohm and Lafuente \cite{BoLa2020} provides a stratum label $\beta$ for the nonzero quotient bracket, with the given positive-definite inner product. Setting $s=\|\beta\|^2>0$, we prove that $C=\beta+sI$ is positive definite and extend it to a positive-semidefinite operator $\Ecal$ on $\g\oplus\g^*$. Let $\nu$ denote the twisted cotangent bracket and $M_\nu$ its moment operator, defined by \eqref{eq:moment-operator}. For $\lambda=-c<0$, the derivation $L$ in \eqref{eq:double-derivation} and the mean-curvature vector $U$ yield
\begin{equation}\label{eq:introduction-square}
0=\left\|\Fcal-\frac cs\Ecal\right\|^2
+\frac{2c}{3s}\tr(M_\nu\Ecal),
\qquad \Fcal=\Sym(L+\ad_U^\nu).
\end{equation}
We decompose the Cartan three-form $\tau(X,Y,Z)=q(\nu(X,Y),Z)$ into five orthogonal components, where $q$ is the natural invariant pairing on $\g\oplus\g^*$. This proves that the trace term is nonnegative. The component determined by the bracket on $\g/\n$ contributes zero to this trace pairing. Equality gives $\eta(\n^\perp,\g)=0$, and hence $K_\eta^*K_\eta B=0$. Equation~\eqref{eq:intro-killing-obstruction} then proves harmonicity. A separate calculation treats the zero quotient bracket, including $W=0$. For solvable $\g$, a further commutator identity proves normality and $H(\n^\perp,\g,\g)=0$.

The remaining results follow from the criterion for extending generalized nilsolitons by commuting normal derivations. When $H\ne0$, we obtain the sharp bound $\dim\af\le\dim\n-1$ and classify the equality cases. For fixed generalized nilsoliton data, extensions by $k$-dimensional spaces of commuting symmetric derivations are parametrized, up to orthogonal Lie algebra isomorphisms preserving $H$, by a quotient of a Grassmannian by a finite group. This extends Will's description for classical solvsolitons \cite[Section~3, equation~(5)]{Wi2011} to the simultaneous stabilizer of the bracket and the three-form. The identity $H(\n^\perp,\g,\g)=0$ also restricts which invariant cohomology classes can occur. Explicit examples include Lie algebras that are not completely solvable and whose Killing form takes negative values.

The paper is organized as follows. Section~\ref{sec:preliminaries} fixes notation, establishes preliminary identities, recalls the harmonicity criterion from \cite{ChChZh2026b}, and treats compact semisimple Lie algebras with bi-invariant metrics. Section~\ref{sec:cotangent} verifies the formula for the generalized Ricci operator on the twisted cotangent Lie algebra and constructs the nilpotent quadratic quotient and its positive-definite operator. Section~\ref{sec:global} proves Theorem~\ref{thm:harmonicity}, including the case of a zero quotient bracket. Section~\ref{sec:structure} proves Theorem~\ref{thm:structure} and the extension criterion. Section~\ref{sec:rank-moduli} gives the sharp rank bound and parametrizes symmetric extensions.

\section{Algebraic solitons and elementary reductions}\label{sec:preliminaries}
In this section, we fix the conventions, establish the preliminary identities and recall the harmonicity criterion used below. We also prove harmonicity for compact semisimple Lie algebras with bi-invariant metrics.

All vector spaces and Lie algebras are finite-dimensional and real. Adjoints are taken with respect to the positive-definite metric. For an endomorphism $A$, set $\Sym A=(A+A^*)/2$ and $\Skew A=(A-A^*)/2$. Endomorphisms carry the Hilbert--Schmidt inner product $\langle A,B\rangle=\tr(AB^*)$ and norm $\|A\|$. We distinguish the usual exterior norm $\|\omega\|_\wedge$ from the full contraction norm $|\omega|$ of a form. In degree $p$,
\begin{equation}\label{eq:norm-convention}
|\omega|^2=p!\|\omega\|_\wedge^2,\qquad
\langle d\alpha,\omega\rangle=(p+1)\langle\alpha,d^\dagger\omega\rangle
\quad(\alpha\in\Lambda^p\g^*).
\end{equation}
The pairings in the second identity are full contractions in their respective degrees. All unmarked form pairings below use this convention. The operator $d^\dagger$ is the adjoint of the Chevalley--Eilenberg differential for the exterior inner product.

The mean-curvature vector $U$ is defined by $\langle U,X\rangle=\tr\ad_X$. On left-invariant forms the geometric codifferential is
\begin{equation}\label{eq:codifferential}
\delta=d^\dagger+\iota_U.
\end{equation}
To verify \eqref{eq:codifferential}, choose a left-invariant orthonormal frame $e_1,\ldots,e_{\dim\g}$ with dual coframe $e^1,\ldots,e^{\dim\g}$, and let $\nabla$ be the Levi-Civita connection. On left-invariant forms, each $\nabla_{e_i}$ is a skew-adjoint endomorphism for the exterior inner product, by metric compatibility. Since the connection is torsion-free, $d=\sum_i e^i\wedge\nabla_{e_i}$. Taking its exterior adjoint and using the Leibniz rule gives, for any left-invariant form $\omega$,
\begin{equation}\label{eq:codifferential-adjoint}
d^\dagger\omega=-\sum_i\nabla_{e_i}(\iota_{e_i}\omega) =-\sum_i\iota_{e_i}\nabla_{e_i}\omega
-\iota_{\sum_i\nabla_{e_i}e_i}\omega.
\end{equation}
For every left-invariant vector field $X$, the Koszul formula gives $\langle\sum_i\nabla_{e_i}e_i,X\rangle=\sum_i\langle[X,e_i],e_i\rangle=\tr\ad_X$, so $\sum_i\nabla_{e_i}e_i=U$. The geometric identity $\delta\omega=-\sum_i\iota_{e_i}\nabla_{e_i}\omega$ now turns \eqref{eq:codifferential-adjoint} into \eqref{eq:codifferential}.

In particular, the second term in \eqref{eq:codifferential} must be retained in a non-unimodular Lie algebra. We put $\Delta=-(d\delta+\delta d)$. Thus the torsion equation in \eqref{eq:soliton} is equivalent to $\Delta H=-2\lambda H+\rho(D)H$, where
\begin{equation}\label{eq:form-action}
(\rho(A)\omega)(X_1,\ldots,X_p)
=-\sum_{j=1}^p\omega(X_1,\ldots,AX_j,\ldots,X_p).
\end{equation}
The three-form contraction in \eqref{eq:soliton} satisfies $\tr H^2=|H|^2=6\|H\|_\wedge^2$. For a two-form $\alpha$, the associated skew endomorphism $K_\alpha$ is defined by $\langle K_\alpha X,Y\rangle=\alpha(X,Y)$. Then $|\alpha|^2=\|K_\alpha\|^2$.

The correspondence between algebraic generalized Ricci solitons and self-similar solutions is established in \cite[Proposition~3.4 and Lemma~7.5]{FuLaSt2024}. We write the construction explicitly in our conventions. Let $G$ be the simply connected group with Lie algebra $\g$. Set $a(t)=1-2\lambda t$ and $s(t)=\int_0^t a(u)^{-1}\,du$ on the interval where $a(t)>0$. Let $\Phi_t\in\Aut(G)$ have differential $d_e\Phi_t=\exp(-s(t)D)$. The associated self-similar family is
\begin{equation}\label{eq:self-similar-family}
g_t=a(t)\Phi_t^*g,\qquad H_t=a(t)\Phi_t^*H.
\end{equation}
Indeed, symmetry of $D$ and the definition of $\rho$ give
\begin{equation}\label{eq:self-similar-derivatives}
\begin{aligned}
\left.\frac{d}{dt}\right|_{t=0}g_t
&=-2\lambda g-2g(D\,\cdot,\cdot)=-2\Ric_g+\tfrac12H_g^2,\\
\left.\frac{d}{dt}\right|_{t=0}H_t
&=-2\lambda H+\rho(D)H=-d\delta_gH.
\end{aligned}
\end{equation}
The tensor $\Ric_g$ and the two-tensor $H_g^2$ are unchanged by simultaneous scaling $(g,H)\mapsto(ag,aH)$, and $\delta_{ag}(aH)=\delta_gH$. Naturality under pullback and $s'(t)=a(t)^{-1}$ therefore extend \eqref{eq:self-similar-derivatives} to every time in \eqref{eq:self-similar-family}. This verifies \eqref{eq:intro-grf} directly and explains why the same scaling factor is used for $g$ and $H$.

We record the curvature conventions used on both the base algebra and the double. For a bracket $\mu$ on a Euclidean vector space, set
\begin{equation}\label{eq:moment-operator}
\langle M_\mu X,Y\rangle
=-\tfrac12\sum_i\langle\mu(X,e_i),\mu(Y,e_i)\rangle
+\tfrac14\sum_{i,j}\langle\mu(e_i,e_j),X\rangle\langle\mu(e_i,e_j),Y\rangle.
\end{equation}
If $\mu$ is a Lie bracket and $B$ is its Killing operator, the classical Ricci formula \cite[Corollary~7.38]{Be1987} gives
\begin{equation}\label{eq:ricci-decomposition}
\Ric=M_\mu-\tfrac12B-\Sym(\ad_U).
\end{equation}
The moment-map interpretation of $M_\mu$ goes back to Lauret \cite[Section~3]{La2003}. The infinitesimal action on brackets is $\Theta(A)\mu=A\mu-\mu(A\,\cdot,\cdot)-\mu(\cdot,A\,\cdot)$. With the full bracket norm $\|\mu\|^2=\sum_{i,j}\|\mu(e_i,e_j)\|^2$, we use the normalization in \cite[equation~(15)]{LaLa2014},
\begin{equation}\label{eq:moment-identity}
4\tr(M_\mu A)=\langle\Theta(A)\mu,\mu\rangle,\qquad
\tr M_\mu=-\tfrac14\|\mu\|^2.
\end{equation}
The first identity holds for every endomorphism $A$, since the skew-adjoint part acts skew-adjointly on bracket tensors. The second follows by taking $A=I$. In particular, $\tr(M_\mu A)=0$ for every derivation, whether or not it is symmetric, as observed in \cite[Remark~2.4]{LaLa2014}.

The next proposition applies Milnor's curvature identity for vectors orthogonal to the derived algebra \cite[Lemma~2.3 and its proof, pp.~313--314]{Mi1976} to the mean-curvature vector $U$.

\begin{proposition}\label{prop:nonunimodular-expanding}
Every non-unimodular algebraic generalized Ricci soliton is expanding.
\end{proposition}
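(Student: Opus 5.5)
The plan is to evaluate the quadratic form $\langle R\,U,U\rangle$ in two ways, where $R=\Ric-\tfrac14H^2=\lambda I+D$, and compare. Assume $\g$ is non-unimodular, so $U\neq0$.

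\textbf{Step 1: $U$ is orthogonal to $[\g,\g]$.} Since $\tr\ad_{[X,Y]}=0$, we have $\langle U,[\g,\g]\rangle=0$.

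\textbf{Step 2: Milnor's identity.} For a vector $U$ orthogonal to the derived algebra, Milnor's computation \cite[Lemma~2.3]{Mi1976}, equivalently \eqref{eq:ricci-decomposition} with $M_\mu$ and $B$ evaluated at $U$, gives
\[
\langle\Ric U,U\rangle=-\|\Sym\ad_U\|^2\le0 .
\]
In \eqref{eq:ricci-decomposition} the second sum in $M_\mu$ vanishes at $U$ because $U\perp[\g,\g]$. The remaining terms are $-\tfrac12\|\ad_U\|^2-\tfrac12\tr(\ad_U^2)$ from $M_\mu-\tfrac12B$, together with $-\langle\ad_U U,U\rangle=0$. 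They combine to $-\|\Sym\ad_U\|^2$. Moreover $\tr\Sym\ad_U=|U|^2>0$, so by Cauchy--Schwarz $\|\Sym\ad_U\|^2\ge|U|^4/n>0$. Since $H^2$ is positive semidefinite, it follows that $\langle RU,U\rangle<0$.

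\textbf{Step 3: the soliton equation at $U$.} From \eqref{eq:soliton}, $\langle RU,U\rangle=\lambda|U|^2+\langle DU,U\rangle$. Any derivation $D$ maps $\g$ into $[\g,\g]$ on the image of $\ad$, and for any derivation $\tr\ad_{DX}=\tr[D,\ad_X]=0$. Hence $D\g\perp U$, so $\langle DU,U\rangle=0$.

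\textbf{Step 4: conclusion.} Combining Steps 2 and 3 gives
\[
\lambda|U|^2=\langle RU,U\rangle\le-\|\Sym\ad_U\|^2<0,
\]
so $\lambda<0$ and the soliton is expanding.

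The main obstacle is the sign bookkeeping in Step 2. I will check the Milnor formula directly from \eqref{eq:ricci-decomposition} and \eqref{eq:moment-operator} using an orthonormal basis. The torsion equation is never used.
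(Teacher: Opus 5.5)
Your proposal is correct and is essentially the paper's proof. It uses Milnor's identity $\langle\Ric U,U\rangle=-\|\Sym(\ad_U)\|^2$ from \eqref{eq:ricci-decomposition} with $U\perp[\g,\g]$, the vanishing $\langle DU,U\rangle=\tr\ad_{DU}=0$, and pairs the metric equation with $U\otimes U$ to get $\lambda|U|^2=-\|\Sym(\ad_U)\|^2-\tfrac14|\iota_UH|^2<0$. (The paper uses symmetry of $D$ to get the stronger $DU=0$, which you rightly note is unnecessary.) One small correction: the opening clause of your Step~3 is false as written, since a derivation need not map $\g$ into $[\g,\g]$ (take $D=I$ on an abelian algebra); it is harmless, because $\langle U,DX\rangle=\tr\ad_{DX}=\tr[D,\ad_X]=0$ already gives $D\g\perp U$.
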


\begin{proof}
Symmetry of $D$ and the derivation identity give $\langle DU,X\rangle=\tr\ad_{DX}=\tr[D,\ad_X]=0$. Also $U\perp[\g,\g]$. In \eqref{eq:moment-operator}, the bracket-output term therefore vanishes at $U$, and the mean-curvature term in \eqref{eq:ricci-decomposition} vanishes because $[U,U]=0$. Thus $\langle\Ric U,U\rangle=-\frac12\|\ad_U\|^2-\frac12\tr((\ad_U)^2)=-\|\Sym(\ad_U)\|^2$. Pairing the metric equation with $U\otimes U$ yields
\begin{equation}\label{eq:nonunimodular-sign}
\lambda|U|^2=-\|\Sym(\ad_U)\|^2-\tfrac14|\iota_UH|^2.
\end{equation}
For a non-unimodular algebra, $U\ne0$ and $\tr\ad_U=|U|^2>0$. Thus $\Sym(\ad_U)\ne0$, and \eqref{eq:nonunimodular-sign} gives $\lambda<0$.
\end{proof}

The generalized scalar curvature detects the steady case without any structural assumption on $\g$. We give a direct algebraic proof. The identity also follows from the invariant scalar evolution in \cite[Section~6, equation~(44)]{FuLaSt2024}.

\begin{lemma}\label{lem:invariant-scalar}
Every algebraic generalized Ricci soliton satisfies
\begin{equation}\label{eq:invariant-scalar}
2\lambda\left(\scal-\frac1{12}|H|^2\right)
=2\|R\|^2+\frac12|\delta H|^2.
\end{equation}
Consequently, every steady algebraic generalized Ricci soliton satisfies $R=0$, $D=0$ and $\delta H=0$.
\end{lemma}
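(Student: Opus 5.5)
The plan is to combine two scalar identities: the Hilbert--Schmidt pairing of the metric equation with $R$, and the pairing of the torsion equation with $H$. The cross terms involving $D$ should then cancel.

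\textbf{Step 1 (trace of the metric equation).} Taking the trace of $R=\lambda I+D$ and using $\tr H^2=|H|^2$ gives $\scal-\frac14|H|^2=n\lambda+\tr D$, where $n=\dim\g$. Pairing the same equation with $R$ gives $\|R\|^2=\lambda\tr R+\tr(RD)$. Expanding $\tr(RD)=\lambda\tr D+\tr D^2$ will let me express $\|R\|^2$ in terms of $\lambda$, $\tr D$ and $\tr D^2$.

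\textbf{Step 2 (trace of $\Ric\,D$).} I would evaluate $\tr(\Ric D)$ using \eqref{eq:ricci-decomposition}. By \eqref{eq:moment-identity}, $\tr(M_\mu D)=0$ because $D$ is a derivation. Also $\tr(BD)=0$: the Killing form is $\Der$-invariant and $D$ is symmetric, so $\sum_i B(De_i,e_i)$ equals its own negative. Hence $\tr(\Ric D)=-\tr(\Sym(\ad_U)D)=-\tr(\ad_UD)$. The identity $DU=0$ from the proof of Proposition~\ref{prop:nonunimodular-expanding} should help to rewrite this remaining term.

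\textbf{Step 3 (pairing the torsion equation with $H$).} Pairing $d\delta H=2\lambda H-\rho(D)H$ with $H$ gives two sides.
\begin{itemize}
\item On the right: $2\lambda|H|^2-\langle\rho(D)H,H\rangle$, and $\langle\rho(D)H,H\rangle=-3\tr\bigl(D\cdot\tfrac{1}{2}H^2\bigr)$ up to the normalization in \eqref{eq:norm-convention}. So the $D$--$H^2$ coupling appearing in $\tr(RD)=\tr(\Ric D)-\frac14\tr(H^2D)$ is exactly what this pairing produces.
\item On the left: \eqref{eq:norm-convention} gives $\langle d\delta H,H\rangle=3\langle\delta H,d^\dagger H\rangle=3|\delta H|^2-3\langle\delta H,\iota_UH\rangle$, by \eqref{eq:codifferential}.
\end{itemize}

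\textbf{Step 4 (combining).} I would then form the combination $2\lambda(\scal-\frac1{12}|H|^2)-2\|R\|^2$. Using Steps 1--3, the contributions $\tr D^2$, $\tr(DH^2)$ and the $U$-terms should all cancel. What should remain is $\frac12|\delta H|^2$, which requires fixing the relative coefficient of the torsion pairing so that $3|\delta H|^2$ is scaled down to $\frac12|\delta H|^2$.

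\textbf{Main obstacle.} I expect the non-unimodular bookkeeping to be the hardest part: showing that $\tr(\ad_UD)$ and $\langle\delta H,\iota_UH\rangle$ combine with the rest to zero. I would first try $\langle\delta H,\iota_UH\rangle=\langle d^\dagger H,\iota_UH\rangle+|\iota_UH|^2$, and compare it with the $U$-component of the metric equation, namely \eqref{eq:nonunimodular-sign} together with $DU=0$. If that fails, the fallback is the identity already cited in the paper: differentiate the invariant quantity $\scal-\frac1{12}|H|^2$ along the self-similar family \eqref{eq:self-similar-family}, as in \cite[equation~(44)]{FuLaSt2024}. Since $\Phi_t$ acts by automorphisms, this quantity only scales by $a(t)^{-1}$, and its first variation gives the identity directly.

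\textbf{Consequence for $\lambda=0$.} If $\lambda=0$, the left side of \eqref{eq:invariant-scalar} vanishes, so $R=0$ and $\delta H=0$. Then $D=R-\lambda I=0$.
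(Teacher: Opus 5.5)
Your Steps 1--3 follow the paper's proof: $\|R\|^2=\lambda\tr R+\tr(\Ric D)-\tfrac14\tr(H^2D)$, the reduction $\tr(\Ric D)=-\tr(\ad_UD)$, and the pairing of the torsion equation with $H$. Two corrections there. The normalization is $\langle\rho(D)H,H\rangle=-3\tr(H^2D)$ exactly, with no factor $\tfrac12$. With $h=\iota_UH$ and $\eta=\delta H$ this gives $\tr(H^2D)=|\eta|^2-\langle h,\eta\rangle-\tfrac{2\lambda}{3}|H|^2$, so no coefficient needs adjusting, and the expansion in $\tr D^2$ is not needed. Substituting these, \eqref{eq:invariant-scalar} becomes equivalent to
\[
4\tr(\ad_UD)=\langle\iota_UH,\delta H\rangle .
\]
This is a genuine gap. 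It is the entire non-unimodular content of the lemma, and the tools you list cannot produce it. The relation $DU=0$ only gives $[D,\ad_U]=\ad_{DU}=0$, which says nothing about $\tr(\ad_UD)$. The identity \eqref{eq:nonunimodular-sign} is the $U\otimes U$ component of the metric equation, and it contains neither $\tr(\ad_UD)$ nor $\langle d^\dagger H,\iota_UH\rangle$. Splitting $\langle\delta H,\iota_UH\rangle$ by \eqref{eq:codifferential} only relabels terms.

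The missing idea is a second pairing of the metric equation, this time the trace pairing with the endomorphism $V=\ad_U$. Since $V$ is a derivation, \eqref{eq:moment-identity} and $\tr(BV)=0$ give $\tr(\Ric V)=-\|\Sym V\|^2$. Since $dH=0$, Cartan's formula gives $d(\iota_UH)=\rho(V)H$. Hence $-3\tr(H^2V)=\langle dh,H\rangle=3\langle h,d^\dagger H\rangle=3\langle h,\eta-h\rangle$. With $\tr V=|U|^2$ this yields
\[
\lambda|U|^2+\tr(DV)=-\|\Sym V\|^2+\tfrac14\langle h,\eta\rangle-\tfrac14|h|^2 .
\]
Subtracting \eqref{eq:nonunimodular-sign} gives $\tr(DV)=\tfrac14\langle h,\eta\rangle$, which is exactly the required cancellation. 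So \eqref{eq:nonunimodular-sign} does enter, but only in combination with this trace identity.

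Your fallback via the self-similar family is acceptable as a citation; the paper itself notes that the identity follows from \cite[Section~6, equation~(44)]{FuLaSt2024}. But it imports the lemma rather than proving it. The scaling by $a(t)^{-1}$ produces only the left-hand side, and the right-hand side is precisely the content of the cited evolution formula. Computing that first variation yourself does not avoid the issue. For left-invariant data $\nabla U=-\Sym(\ad_U)$, so the $\operatorname{div}\operatorname{div}$ term in the variation of $\scal$ reintroduces $\tr(D\,\ad_U)$.
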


\begin{proof}
Set $V=\ad_U$, $S=\Sym V$, $h=\iota_UH$ and $\eta=\delta H$. For every derivation $A$, automorphism invariance of the Killing form gives $A^*B+BA=0$, hence $\tr(BA)=0$. Together with \eqref{eq:moment-identity}, this gives
\begin{equation}\label{eq:static-derivation-pairings}
\tr(\Ric D)=-\tr(SD)=-\tr(VD),
\qquad
\tr(\Ric V)=-\|S\|^2.
\end{equation}
Here $D=D^*$ and $V$ are derivations. Since $dH=0$, Cartan's formula gives $dh=\rho(V)H$. The full-contraction pairing and \eqref{eq:codifferential} therefore imply
\begin{equation}\label{eq:static-cartan-pairing}
-3\tr(H^2V)=\langle\rho(V)H,H\rangle
=\langle dh,H\rangle
=3\langle h,\eta-h\rangle.
\end{equation}
Pairing $\Ric-H^2/4=\lambda I+D$ with $V$ and using $\tr V=|U|^2$, we obtain
\begin{equation}\label{eq:static-mean-pairing}
\lambda|U|^2+\tr(DV)
=-\|S\|^2+\frac14\langle h,\eta\rangle-\frac14|h|^2.
\end{equation}
The mean-curvature identity \eqref{eq:nonunimodular-sign} now gives $\tr(DV)=\langle h,\eta\rangle/4$. Consequently,
\begin{equation}\label{eq:static-modular-cancellation}
4\tr(\Ric D)+\langle h,\eta\rangle=0.
\end{equation}
No unimodularity assumption has been used.

Next pair $d\eta=2\lambda H-\rho(D)H$ with $H$. Since $\langle\rho(D)H,H\rangle=-3\tr(H^2D)$, the degree factor in \eqref{eq:norm-convention} yields
\begin{equation}\label{eq:static-torsion-pairing}
\tr(H^2D)=|\eta|^2-\langle h,\eta\rangle
-\frac{2\lambda}{3}|H|^2.
\end{equation}
Finally, $R=\lambda I+D$ gives
\begin{equation}\label{eq:static-final-pairing}
\begin{aligned}
\|R\|^2
&=\lambda\tr R+\tr(\Ric D)-\frac14\tr(H^2D)\\
&=\lambda\left(\scal-\frac1{12}|H|^2\right)-\frac14|\eta|^2,
\end{aligned}
\end{equation}
where \eqref{eq:static-modular-cancellation} cancels the two modular terms. This proves \eqref{eq:invariant-scalar}. If $\lambda=0$, both nonnegative terms vanish, and $R=\lambda I+D$ gives $D=0$.
\end{proof}

Let $\mathfrak r$ be the radical and $\n$ the nilradical of $\g$. Set $\af=\n^\perp$, $m=\dim\n$, $r=\dim\af$ and $N=m+r$. At this point $\af$ is only a vector-space complement. We use the classical identities of \cite[Lemma~2.6 and Remark~2.7]{LaLa2014}, in the case of trivial isotropy. The proof below specifies why the induced derivation vanishes on the central summand of $\g/\n$. 

\begin{lemma}\label{lem:general-nilradical}
Every derivation $A$ of $\g$ satisfies $A\mathfrak r\subset\n$. Moreover, $[\g,\mathfrak r]\subset\n$, the quotient $\g/\n$ is reductive, and the derivation induced by $A$ on this quotient has zero trace. In particular, a symmetric derivation preserves both $\n$ and $\af$ and has trace zero on $\af$. Also $B\n=0$ and $U\in\af$.
\end{lemma}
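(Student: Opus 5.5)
We prove the assertions in the order stated, using only classical structure theory of real Lie algebras. The one genuinely new point is the vanishing of the induced trace on the centre of $\g/\n$.

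\emph{Derivations map the radical into the nilradical.} For $A\in\Der(\g)$ we use the standard fact (Jacobson) that $A\mathfrak r\subset\n$. A direct argument: form the semidirect product $\tilde\g=\R A\ltimes\g$. Its radical $\tilde{\mathfrak r}$ contains $\R A\oplus\mathfrak r$ modulo the obvious identification. The derived algebra of a solvable Lie algebra is nilpotent, and more generally $[\tilde\g,\tilde{\mathfrak r}]$ lies in the nilradical of $\tilde\g$. Hence $A\mathfrak r=[A,\mathfrak r]$ lies in the nilradical of $\tilde\g$, and its intersection with $\g$ is a nilpotent ideal of $\g$, hence contained in $\n$. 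Applying the same fact to $A=\ad_X$ gives $[\g,\mathfrak r]\subset\n$.

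\emph{Reductivity of the quotient and vanishing of the induced trace.} Write $\g=\mathfrak s\ltimes\mathfrak r$ with $\mathfrak s$ a Levi factor. Then $\g/\n\cong\mathfrak s\oplus\mathfrak r/\n$, and $\mathfrak r/\n$ is central because $[\g,\mathfrak r]\subset\n$. So $\g/\n$ is reductive. The induced derivation $\bar A$ preserves $\n$ (a characteristic ideal), and $\bar A$ maps $\mathfrak r/\n$ to zero because $A\mathfrak r\subset\n$. On the semisimple part, $\bar A$ composed with the projection to $\g/\mathfrak r\cong\mathfrak s$ is an inner derivation $\ad_Y$, and $\tr\ad_Y=0$ there. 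With respect to the filtration $0\subset\mathfrak r/\n\subset\g/\n$, $\bar A$ is block triangular with diagonal blocks $0$ and $\ad_Y$. Hence $\tr\bar A=0$.

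\emph{Consequences for symmetric derivations.} If $A=A^*$ preserves $\n$, then it preserves $\af=\n^\perp$. Via the isomorphism $\af\cong\g/\n$, the restriction $A|_\af$ corresponds to $\bar A$, so $\tr(A|_\af)=0$.

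\emph{The Killing operator and the mean-curvature vector.} For $X\in\n$ and any $Y$, $\ad_X\ad_Y$ maps $\g$ into $\n$ and acts nilpotently on $\n$ (since $\ad_X|_\n$ is nilpotent and $\n$ is an ideal; Engel applied to $\n$ acting on $\g$ via the chain $\g\supset\n\supset[\n,\n]\supset\cdots$, each step mapped forward by $\ad_X$). So $\tr(\ad_X\ad_Y)=0$, and $\langle B X,Y\rangle=0$, i.e.\ $B\n=0$. Similarly, for $X\in\n$, $\ad_X$ is nilpotent, so $\tr\ad_X=0$. Hence $U\perp\n$, which means $U\in\af$.

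The main care is needed in the trace statement: one must confirm that the central block $\mathfrak r/\n$ contributes zero, and this uses precisely $A\mathfrak r\subset\n$ rather than merely $A\mathfrak r\subset\mathfrak r$.
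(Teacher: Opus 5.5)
Your proposal follows essentially the paper's route. It uses a one-dimensional semidirect extension by $A$ for the first assertion, the Levi decomposition with $A\mathfrak r\subset\n$ killing the central block, and the lower central series of $\n$ for $B$ and $U$. Your block-triangular argument for $\tr\bar A=0$ is sound. It even avoids checking that $\bar A$ preserves the semisimple summand, since only the induced derivation of $\g/\mathfrak r$ must be inner. Two local statements need repair.

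First, the claim that the radical $\tilde{\mathfrak r}$ of $\tilde\g=\R A\ltimes\g$ contains $\R A\oplus\mathfrak r$ is false as stated. Take $\g$ semisimple and $A=\ad_Y$ with $Y\ne0$. Then $A-Y$ is central, so $\tilde\g=\R(A-Y)\oplus\g$ and $\tilde{\mathfrak r}=\R(A-Y)$, which does not contain $A$. You only use $A\in\tilde\g$ and $\mathfrak r\subset\tilde{\mathfrak r}$. The latter holds because $\mathfrak r$ is characteristic, hence $A$-invariant, hence a solvable ideal of $\tilde\g$; state it that way. Note also what you are citing for $\tilde\g$: the classical theorem $[\tilde\g,\tilde{\mathfrak r}]\subset\operatorname{nil}(\tilde\g)$, which is the lemma's own second assertion for an arbitrary Lie algebra. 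The paper uses a weaker input by extending only the solvable algebra $\mathfrak r$. There it needs only that the derived algebra of a solvable Lie algebra is nilpotent, and it then checks that the nilradical of $\mathfrak r$ equals $\n$.

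Second, in the step $B\n=0$, the facts that $\ad_X|_\n$ is nilpotent and $\n$ is an ideal do not by themselves make $\ad_X\ad_Y$ nilpotent. What is needed is that every term $C^k(\n)$ of the lower central series is an ideal of $\g$. Then $\ad_Y$ preserves the filtration $\g\supset\n\supset C^2(\n)\supset\cdots\supset0$, while $\ad_X$ strictly lowers it, so $\ad_X\ad_Y$ strictly lowers it and has zero trace. This is the paper's sentence ``Every $\ad_X$ preserves this filtration.'' An arbitrary Engel flag for $\n$ acting on $\g$ would not serve, since $\ad_Y$ need not preserve it. With these two fixes your proof is complete.
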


\begin{proof}
For a solvable Lie algebra $\mathfrak t$ and $A\in\Der(\mathfrak t)$, the semidirect extension $\R z\ltimes_A\mathfrak t$ is solvable. Lie's theorem and Engel's theorem show that its derived algebra is a nilpotent ideal. Its intersection with $\mathfrak t$ is a nilpotent ideal containing $A\mathfrak t$. Thus every derivation of $\mathfrak t$ maps into its nilradical.

The nilradical of $\mathfrak r$ equals $\n$. Indeed, it is characteristic in $\mathfrak r$ and therefore a nilpotent ideal of $\g$, while $\n\subset\mathfrak r$ gives the reverse inclusion. Restricting $A$ and $\ad_X$ to $\mathfrak r$ gives $A\mathfrak r\subset\n$ and $[\g,\mathfrak r]\subset\n$. Levi decomposition identifies $\g/\n$ with the direct sum of a semisimple algebra and the center $\mathfrak r/\n$. The induced derivation vanishes on the center and is inner on the semisimple summand, so its trace is zero.

The nilradical is characteristic. Symmetry of $A$ therefore makes $\af$ invariant as well. For $Z\in\n$, the operator $\ad_Z$ induces zero on $\g/\n$ and strictly lowers the lower central filtration of $\n$. Every $\ad_X$ preserves this filtration. Hence $\tr\ad_Z=\tr(\ad_Z\ad_X)=0$, which proves the assertions about $U$ and $B$.
\end{proof}

Automorphisms preserve the Killing form. Differentiating this invariance gives $A^*B+BA=0$ for every $A\in\Der(\g)$. Since $B=B^*$, taking traces gives
\begin{equation}\label{eq:killing-derivation-trace}
\tr(BA)=0\qquad(A\in\Der(\g)).
\end{equation}
This identity will cancel the Killing term in the global square without a sign assumption on $B$.

We record separately the stronger conclusions available for solvable algebras.

\begin{lemma}\label{lem:solvable-preliminaries}
For every solvable Lie algebra,
\begin{equation}\label{eq:solvable-preliminaries}
[\g,\g]\subset\n,\qquad B\n=0,\qquad U\in\af,\qquad
A\g\subset\n\quad(A\in\Der(\g)).
\end{equation}
Consequently, a symmetric derivation $D$ vanishes on $\af$, and $dX^\flat=0$ for every $X\in\af$.
\end{lemma}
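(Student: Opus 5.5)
The plan is to specialize Lemma~\ref{lem:general-nilradical} to the case $\mathfrak r=\g$, and then to use the symmetry of $D$ for the two consequences. When $\g$ is solvable the radical is the whole algebra. The first assertion of Lemma~\ref{lem:general-nilradical}, applied to an arbitrary derivation $A$, then gives $A\g=A\mathfrak r\subset\n$. The inclusion $[\g,\mathfrak r]\subset\n$ becomes $[\g,\g]\subset\n$. One could also see this directly from $A=\ad_X$, since inner derivations are derivations. The statements $B\n=0$ and $U\in\af$ are already contained in Lemma~\ref{lem:general-nilradical}, so the four inclusions in \eqref{eq:solvable-preliminaries} follow at once.

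For the consequences, I take a symmetric derivation $D$. The key identity is that $\af=\n^\perp$ equals $(D\g)^\perp$ restricted appropriately. Since $D\g\subset\n$, for $X\in\af$ and any $Y\in\g$ we have
\[
\langle DX,Y\rangle=\langle X,DY\rangle=0,
\]
because $DY\in\n$ and $X\perp\n$. Hence $DX=0$, so $D$ vanishes on $\af$.

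For the one-forms, take $X\in\af$ and set $\xi=X^\flat=\langle X,\cdot\rangle$. The Chevalley--Eilenberg differential is
\[
d\xi(Y,Z)=-\xi([Y,Z])=-\langle X,[Y,Z]\rangle .
\]
This vanishes because $[Y,Z]\in[\g,\g]\subset\n$ and $X\perp\n$. Therefore $dX^\flat=0$.

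None of these steps is hard. The only point that needs care is whether Lemma~\ref{lem:general-nilradical} really applies with $\mathfrak r=\g$, that is, whether its proof covers the case where the algebra is its own radical. It does, because the first paragraph of that proof treats an arbitrary solvable algebra $\mathfrak t$ with a derivation, and here one simply takes $\mathfrak t=\g$.
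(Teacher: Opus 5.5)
Your proof is correct and is essentially the paper's argument: set $\mathfrak r=\g$ in Lemma~\ref{lem:general-nilradical} to obtain all four inclusions, then combine the symmetry of $D$ with $D\g\subset\n$ to get $D|_\af=0$, and use $X\perp[\g,\g]$ to get $dX^\flat=0$. The only slip is your aside that $\af$ ``equals'' $(D\g)^\perp$, which is false in general, since only the inclusion $\af\subset(D\g)^\perp$ holds; your displayed computation uses just this inclusion, so the argument is unaffected.
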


\begin{proof}
Here $\mathfrak r=\g$, so Lemma~\ref{lem:general-nilradical} gives every assertion in \eqref{eq:solvable-preliminaries}. If $D=D^*$ and $D\g\subset\n$, symmetry gives $D|_\af=0$. Finally, $X\perp[\g,\g]$ for $X\in\af$, so $dX^\flat=0$.
\end{proof}

\begin{lemma}\label{lem:horizontal}
If \eqref{eq:soliton} holds on a solvable Lie algebra, then
\begin{equation}\label{eq:horizontal}
\lambda\|X\|^2=-\|\Sym(\ad_X)\|^2-\tfrac14|\iota_XH|^2
\quad\text{for every }X\in\af.
\end{equation}
\end{lemma}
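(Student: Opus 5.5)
The plan is to repeat the computation in Proposition~\ref{prop:nonunimodular-expanding}, with $U$ replaced by an arbitrary $X\in\af$. We pair the metric equation $\Ric-\frac14H^2=\lambda I+D$ with $X\otimes X$ and evaluate each term using Lemma~\ref{lem:solvable-preliminaries}. By that lemma a symmetric derivation vanishes on $\af$, so $DX=0$ and the right-hand side gives exactly $\lambda\|X\|^2$. On the left, $\langle H^2X,X\rangle=\sum_{i,j}H(X,e_i,e_j)^2=|\iota_XH|^2$, which produces the torsion term $-\frac14|\iota_XH|^2$.

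It remains to show that $\langle\Ric X,X\rangle=-\|\Sym(\ad_X)\|^2$. We use the decomposition \eqref{eq:ricci-decomposition} together with \eqref{eq:moment-operator}.
\begin{itemize}
\item In $M_\mu$, the bracket-output term $\frac14\sum_{i,j}\langle[e_i,e_j],X\rangle^2$ vanishes. Indeed, $[\g,\g]\subset\n\perp\af$ by Lemma~\ref{lem:solvable-preliminaries}.
\item The remaining part of $M_\mu$ gives $-\frac12\sum_i\|[X,e_i]\|^2=-\frac12\|\ad_X\|^2$.
\item The Killing term vanishes: $B\n=0$ and $B$ is symmetric, so $B$ maps into $\n^\perp=\af$. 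Moreover $\langle BX,X\rangle=\tr(\ad_X^2)$, and $\ad_X^2$ maps $\g$ into $[\g,\g]\subset\n$ and $\n$ into $\n$, so $\ad_X^2$ is nilpotent. Alternatively, $B\n=0$ alone gives $\tr(\ad_X^2)=\tr(\ad_X^2|_{\g/\n})=0$ since $\ad_X$ vanishes on $\g/\n$.
\end{itemize}
This shows $-\frac12\langle BX,X\rangle=0$.

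The mean-curvature term is $\langle\Sym(\ad_U)X,X\rangle=\langle[U,X],X\rangle$. It vanishes because $[U,X]\in[\g,\g]\subset\n$ while $X\in\af$.

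Collecting the terms, $\langle\Ric X,X\rangle=-\frac12\|\ad_X\|^2$. Since $\tr(\ad_X^2)=0$, we have
\[
\|\Sym\ad_X\|^2=\tfrac12\|\ad_X\|^2+\tfrac12\tr(\ad_X^2)=\tfrac12\|\ad_X\|^2,
\]
so $\langle\Ric X,X\rangle=-\|\Sym(\ad_X)\|^2$. Combining this with the torsion term and the right-hand side yields \eqref{eq:horizontal}.

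The only point needing care is the vanishing of the Killing and mean-curvature contributions. In Proposition~\ref{prop:nonunimodular-expanding} these disappeared because of the special vector $U$. Here they disappear instead because of solvability, through the inclusion $[\g,\g]\subset\n$ and the resulting nilpotence of $\ad_X^2$.
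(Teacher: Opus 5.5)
Your handling of the $D$, torsion, moment and mean-curvature terms is correct, but the Killing step is false. For $X\in\af$ one does \emph{not} have $\tr(\ad_X^2)=0$.

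The operator $\ad_X^2$ maps $\g$ into $\n$ and preserves $\n$. So it is nilpotent exactly when $\ad_X|_\n$ is. For $0\ne X\in\af$ this never happens, because in a solvable algebra the nilradical is precisely the set of $\ad$-nilpotent elements.

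Your alternative argument fails for the same reason. One has $\tr(\ad_X^2)=\tr(\ad_X^2|_\n)+\tr(\overline{\ad_X}^{\,2})$, where $\overline{\ad_X}=0$ is the induced map on $\g/\n$, so only the quotient term vanishes. The condition $B\n=0$ says nothing about $B(X,X)$ for $X\in\af$. In fact your own observation that $B$ maps into $\af$ shows that $\af$ is exactly where $B$ can be nonzero.

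Here is a concrete counterexample. Take $\mathfrak{aff}(\R)$ with orthonormal $A,X$ and $[A,X]=X$. This is the hyperbolic plane, a soliton with $H=0$, $D=0$ and $\lambda=-1$. There one has
\begin{itemize}
\item $\tr(\ad_A^2)=1$;
\item $\Ric(A,A)=-1$, not $-\tfrac12\|\ad_A\|^2=-\tfrac12$;
\item $\|\Sym\ad_A\|^2=1\ne\tfrac12\|\ad_A\|^2$.
\end{itemize}
Example~\ref{ex:rotating-einstein} similarly gives $B(e_0,e_0)=2(a^2-b^2)$.

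Your final formula is nonetheless correct, because the false identity $\tr(\ad_X^2)=0$ is used twice and the two errors cancel. You use it once to drop $-\tfrac12\langle BX,X\rangle$ and once to replace $\|\Sym\ad_X\|^2$ by $\tfrac12\|\ad_X\|^2$.

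The repair, which is exactly the paper's proof, is to keep the Killing term. Equation \eqref{eq:ricci-decomposition} gives $\langle\Ric X,X\rangle=-\tfrac12\|\ad_X\|^2-\tfrac12\tr((\ad_X)^2)$. Your identity $\|\Sym V\|^2=\tfrac12\|V\|^2+\tfrac12\tr(V^2)$ turns this into $-\|\Sym(\ad_X)\|^2$ with no vanishing claim at all.

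The same absorption happens in Proposition~\ref{prop:nonunimodular-expanding}, where the Killing term is retained as $-\tfrac12\tr((\ad_U)^2)$. So your closing remark misdescribes that proof. In the present lemma, solvability is needed only for $[\g,\g]\subset\n$, which removes the bracket-output and mean-curvature terms, and for $DX=0$.
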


\begin{proof}
For $X\in\af$, the positive term in \eqref{eq:moment-operator} vanishes because $[\g,\g]\subset\n$. Also $\langle[U,X],X\rangle=0$. Formula \eqref{eq:ricci-decomposition} gives $\Ric(X,X)=-\tfrac12\|\ad_X\|^2-\tfrac12\tr((\ad_X)^2)=-\|\Sym(\ad_X)\|^2$. Combine this with $DX=0$ and the metric equation in \eqref{eq:soliton}.
\end{proof}

We recall the harmonicity criterion of \cite[Theorem~3.3]{ChChZh2026b}, which also applies to non-unimodular Lie algebras. If $\eta=\delta H$ for a solution of \eqref{eq:soliton}, then
\begin{equation}\label{eq:killing-criterion}
\tr(K_\eta^*K_\eta B)\ge0\quad\Longrightarrow\quad\eta=0.
\end{equation}

For a nilpotent algebra, $B=0$, so \eqref{eq:killing-criterion} proves harmonicity for every soliton constant. The structural assertion in Theorem~\ref{thm:structure} is vacuous in this case. For a solvable nonnilpotent algebra, \eqref{eq:horizontal} gives $\lambda\le0$. Thus the solvable harmonicity assertion follows from the general nonshrinking assertion. The steady structural assertion will be proved in Section~\ref{sec:structure}, using the equality case of \eqref{eq:horizontal}.

Compact homogeneous steady generalized Ricci solitons are Bismut--Ricci-flat by \cite[Corollary~2.4]{PoRa2025}. The proof uses a gradient representative and integration of the trace equation. For compact semisimple Lie groups, \cite[Theorem~7.9]{FuLaSt2026} proves the same conclusion for semi-algebraic generalized Ricci solitons with $\lambda\le0$ by an algebraic trace identity, without assuming that the metric is bi-invariant. Under the bi-invariant metric assumption, the following Weitzenb\"ock argument also treats the shrinking case.

\begin{proposition}\label{prop:bi-invariant-harmonicity}
Let $\g$ be compact semisimple and let its metric be bi-invariant. Every algebraic generalized Ricci soliton on this metric has harmonic torsion. If $\lambda>0$, then $H=0$.
\end{proposition}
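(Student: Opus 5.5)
The plan is to use the strong rigidity of a bi-invariant metric on a compact semisimple algebra. First I record the simplifications. Every $\ad_X$ is skew-adjoint, so $\g$ is unimodular and $U=0$, hence $\delta=d^\dagger$. By \eqref{eq:ricci-decomposition}, $M_\mu=-\tfrac14B$ and $\Ric=-\tfrac14B$ is positive definite. Every derivation of a semisimple algebra is inner, say $D=\ad_Z$. Then $D$ is both symmetric and skew-adjoint, so $D=0$. The soliton system therefore reduces to
\begin{equation*}
-\tfrac14B-\tfrac14H^2=\lambda I,\qquad d\,d^\dagger H=2\lambda H,\qquad dH=0.
\end{equation*}

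Next I pair the torsion equation with $H$. Using \eqref{eq:norm-convention}, this gives $|\eta|^2=c\,\lambda|H|^2$ with $\eta=d^\dagger H$ and an explicit positive constant $c$; this agrees with Lemma~\ref{lem:invariant-scalar} once $R=\lambda I$ is inserted. If $\lambda\le0$, then $\eta=0$, so the torsion is harmonic (and $H=0$ when $\lambda<0$). This settles harmonicity in the nonshrinking case. If $\lambda>0$, I aim to prove $H=0$, which gives harmonicity trivially.

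For $\lambda>0$ I would use the Weitzenb\"ock/Kostant description of the Hodge Laplacian on left-invariant forms for a bi-invariant metric. On $\Lambda\g^*$ the Levi-Civita connection acts by $\nabla_X=\tfrac12\rho(\ad_X)$. The rough Laplacian is then a positive multiple of the Casimir operator $\mathrm{Cas}=-\sum_i\rho(\ad_{e_i})^2$ of the coadjoint action, and the curvature term is also expressible through $\mathrm{Cas}$. This yields Kostant's identity $dd^\dagger+d^\dagger d=\kappa\,\mathrm{Cas}$ on $\Lambda\g^*$ for an explicit $\kappa>0$, whose kernel consists of the $\Ad$-invariant forms. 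Since $dH=0$, the torsion equation says that $H$ lies in the $\mathrm{Cas}$-eigenspace with eigenvalue $2\lambda/\kappa>0$. In particular $H$ has no $\Ad$-invariant component, so $H=\tfrac1{2\lambda}d\eta$ is exact and is built from nontrivial isotypic components of $\Lambda^3\g^*$.

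The main obstacle is to turn this spectral information into a contradiction with the metric equation $H^2=-B-4\lambda I$. My first attempt is an averaging argument. Both $B$ and $I$ are $\Ad(G)$-invariant, so $H^2$ must be $\Ad(G)$-invariant, and so must its trace $|H|^2=-\tr B-4\lambda\dim\g$. I would then pair $H^2$ with $\rho(\ad_X)$-derivatives: invariance of $H^2$ gives $\sum\langle\iota_{\cdot}\rho(\ad_X)H,\iota_{\cdot}H\rangle=0$. Summing the second-order version over an orthonormal basis should express $\langle\mathrm{Cas}\,H,H\rangle$ through $\sum_X|\rho(\ad_X)H|^2$ and traces of $H^2$ against $B$. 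Feeding in the eigenvalue $2\lambda/\kappa$ and $H^2=-B-4\lambda I$ should give an identity of the form $\alpha\lambda|H|^2=-\beta\lambda|H|^2$ with $\alpha,\beta\ge0$ not both zero, forcing $H=0$. If this bookkeeping does not close, the fallback is to compare Casimir bounds directly. Every nontrivial constituent of $\Lambda^3\g^*$ reached by exact forms has Casimir eigenvalue at least that of the adjoint representation on the relevant simple factor. On the other hand, $H^2\succeq0$ forces $4\lambda\le$ the smallest eigenvalue of $-B$. I expect these two inequalities to be incompatible with the normalization of $\kappa$, but this step is where I anticipate the real work.
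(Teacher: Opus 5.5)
\textbf{Verdict: there is a genuine gap.} The case $\lambda>0$, which is the real content of the proposition, is not proved, and neither of your sketched routes closes it.

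\textbf{What is correct.} Your reductions are right: $U=0$, $D=0$ and $\Ric=-\tfrac14B$. One small slip: for \eqref{eq:ricci-decomposition} to give this Ricci tensor one needs $M_\mu=+\tfrac14B$, not $-\tfrac14B$. Your nonshrinking argument is also correct, and more direct than the paper's: pairing $dd^\dagger H=2\lambda H$ with $H$ gives $3|\eta|^2=2\lambda|H|^2$, which settles $\lambda\le0$. The Kostant step $\mathrm{Cas}\,H=4\lambda H$ is fine too.

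\textbf{Why your $\lambda>0$ routes do not close.} The averaging route is vacuous. The trace of $\sum_k\rho(\ad_{e_k})^2(H^2)=0$ is the identity $\sum_k|\rho(\ad_{e_k})H|^2=\langle\mathrm{Cas}\,H,H\rangle$. This holds for every three-form, because the $\rho(\ad_{e_k})$ are skew, so no relation between $\lambda$ and $|H|^2$ emerges. The Casimir comparison rests on an unproved lower bound, and that bound is not a general fact about $\Lambda^3\g^*$. For $\g=\mathfrak{so}(5)$, the five-dimensional representation occurs in $\Lambda^3\g^*$ with Casimir eigenvalue $\tfrac23$ of the adjoint one. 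So everything hinges on the restriction to exact forms, for which you give no argument. With several simple factors at different scalings, ``the relevant simple factor'' is also not defined.

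\textbf{The missing idea.} Apply the two-form Weitzenb\"ock formula to $\eta=\delta H$, not to $H$. Three facts make it close.
\begin{enumerate}
\item Since $\delta\eta=0$, we have $b\eta=0$ for the bracket map $b(X\wedge Y)=[X,Y]$. Hence the curvature-operator term $\mathcal R\eta=\tfrac14b^*b\eta$ vanishes.
\item Since $D=0$, we get $(d\delta+\delta d)\eta=\delta d\eta=2\lambda\eta$.
\item Write the Ricci term using the metric equation $\Ric=\lambda I+\tfrac14H^2$. In the usual sign convention it equals $2\lambda\eta-\tfrac14\rho(H^2)\eta$, so $\lambda$ cancels exactly.
\end{enumerate}
Pairing with $\eta$ and integrating over the compact group gives
\[
0=\|\nabla\eta\|_\wedge^2+\tfrac14\tr(K_\eta^*K_\eta H^2),
\]
where both terms are nonnegative. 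Hence $\nabla\eta=0$, so $d\eta=0$ and $\|\eta\|_\wedge^2=\langle H,d\eta\rangle_\wedge=0$. Then $2\lambda H=d\eta=0$ forces $H=0$ when $\lambda>0$. This is the argument the paper uses.

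\textbf{Completing your fallback.} The same formula supplies what your Casimir route lacks. Let $m$ be the smallest eigenvalue of $-B$, and use $\Ric=-\tfrac14B\ge\tfrac m4I$ instead of the metric equation. This gives
\[
2\lambda|\eta|_\wedge^2\ge\|\nabla\eta\|_\wedge^2+\tfrac m2|\eta|_\wedge^2 .
\]
A parallel $\eta$ is closed and hence zero. So $\eta\ne0$ would force $4\lambda>m$, which contradicts $H^2=-B-4\lambda I\succeq0$.
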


\begin{proof}
Every derivation of $\g$ is inner, and bi-invariance makes every inner derivation skew-adjoint. Since $D=D^*$, we have $D=0$. In this proof, all form inner products are exterior inner products. For $\alpha\in\Lambda^2\g^*$, define $b\alpha=\sum_{i<j}\alpha(e_i,e_j)[e_i,e_j]$. The Levi-Civita connection is $\nabla_XY=[X,Y]/2$, which gives
\[
(\delta\alpha)(X)=\tfrac12\sum_i\alpha(e_i,[e_i,X])=-\langle b\alpha,X\rangle.
\]
The curvature operator on two-forms is $\mathcal R=b^*b/4$. In our sign convention, the two-form Weitzenb\"ock formula \cite[Chapter~1, Section~I, equation~(1.154)]{Be1987} reads
\begin{equation}\label{eq:bi-invariant-weitzenbock}
(d\delta+\delta d+\rho(\Ric))\alpha
=\nabla^*\nabla\alpha-2\mathcal R\alpha,
\qquad \mathcal R=\tfrac14b^*b.
\end{equation}
Here $\rho(\Ric)$ is the negative of the usual Ricci action on two-forms.

Set $\eta=\delta H$. Since $D=0$, the torsion equation gives $d\eta=2\lambda H$, and hence $\delta d\eta=2\lambda\eta$. Also $\delta\eta=0$, so $b\eta=0$ and $\mathcal R\eta=0$. The metric equation gives $\rho(\Ric)\eta=-2\lambda\eta+\rho(H^2)\eta/4$. Apply \eqref{eq:bi-invariant-weitzenbock} to $\eta$, take its exterior inner product with $\eta$, and integrate over a compact integrating group. Dividing by its volume and using $\langle\rho(H^2)\eta,\eta\rangle_\wedge=-\tr(K_\eta^*K_\eta H^2)$ gives
\begin{equation}\label{eq:bi-invariant-defect}
0=\|\nabla\eta\|_\wedge^2
+\tfrac14\tr(K_\eta^*K_\eta H^2).
\end{equation}
Both terms in \eqref{eq:bi-invariant-defect} are nonnegative. Hence $\nabla\eta=0$ and $d\eta=0$. Unimodularity gives $\delta=d^\dagger$, and therefore $\|\eta\|_\wedge^2=\langle H,d\eta\rangle_\wedge=0$. Finally, with $D=0$ the torsion equation reads $d\delta H=2\lambda H$. If $\lambda>0$, harmonicity gives $H=0$.
\end{proof}

The steady conclusion of Theorem~\ref{thm:harmonicity} does not imply flatness on a general Lie algebra. On a nonzero compact semisimple algebra with bi-invariant metric, the Cartan form $H(X,Y,Z)=\langle[X,Y],Z\rangle$ is parallel and satisfies $H^2=-B=4\Ric$. This is the standard Bismut-flat construction of \cite[Proposition~3.53]{GaSt2021}. Thus $\lambda=0$ and $D=0$ give a nonflat steady solution with nonzero torsion. The solvability assumption in the flatness conclusion of Corollary~\ref{cor:expanding-solvable} is therefore essential.

For an arbitrary left-invariant metric on a compact semisimple Lie algebra, a symmetric derivation is still zero, since every derivation is inner and has purely imaginary spectrum. However, the curvature-operator argument above uses bi-invariance. Thus the equations $\Ric-H^2/4=\lambda I$ and $d\delta H=2\lambda H$ for $\lambda>0$ remain outside the preceding proof when this metric symmetry is absent.

In Sections~\ref{sec:cotangent}--\ref{sec:global}, $\g$ is an arbitrary metric Lie algebra and we assume only
\begin{equation}\label{eq:expanding-reduction}
\lambda=-c<0.
\end{equation}
Both $r=0$ and $m=0$ are permitted. The latter occurs when $\g$ is semisimple.

\section{Nilpotent quadratic quotients and a positive-definite operator}\label{sec:cotangent}\label{sec:quadratic-git}
In this section, we verify the curvature formula on the twisted cotangent Lie algebra, construct its nilpotent quadratic quotient and obtain the positive-definite operator needed for the global identity.

We use the twisted cotangent construction of \cite[Section~3]{Bo1997}, in the form recalled in \cite[Lemma~4.6]{FuLaSt2024}. It turns the torsion equation into a derivation equation on $\h=\g\oplus\g^*$. On $\h$, use the Euclidean product metric and the neutral form
\begin{equation}\label{eq:neutral-form}
q(X+\xi,Y+\zeta)=\xi(Y)+\zeta(X)=\langle J(X+\xi),Y+\zeta\rangle,
\qquad J=\begin{pmatrix}0&I\\I&0\end{pmatrix}.
\end{equation}
Musical identifications in this formula and in subsequent block matrices are induced by the metric on $\g$. In particular, $J=J^*=J^{-1}$. Our form $q$ is twice the neutral pairing in \cite{FuLaSt2024,FuLaSt2026}, which does not change its orthogonal group. The positive product metric agrees with the metric used in the curvature calculation of \cite[Section~3]{FuLaSt2026}. Define
\begin{equation}\label{eq:cotangent-bracket}
[X+\xi,Y+\zeta]_\nu=[X,Y]+\ad_X^{\mathrm{coad}}\zeta-\ad_Y^{\mathrm{coad}}\xi+H(X,Y,\cdot),
\end{equation}
where $\ad_X^{\mathrm{coad}}\xi=-\xi\circ\ad_X$. We reserve the superscript $*$ for the Euclidean adjoint. Closedness of $H$ is exactly the additional Jacobi identity in \eqref{eq:cotangent-bracket}, and $q$ is invariant. Thus $\nu$ is a quadratic Lie bracket.

Write $\mathfrak{so}(q)=\{A:A^*J+JA=0\}$. This Lie algebra is closed under Euclidean transpose, and every inner derivation of $\nu$ belongs to it. The diagonal blocks of $\ad_{X+\xi}^\nu$ are $\ad_X$ and $-(\ad_X)^*$, so $\nu$ is unimodular. Taking the trace of a product of two such adjoints gives the Killing operator, as in \cite[equation~(22)]{FuLaSt2026},
\begin{equation}\label{eq:double-killing}
B_\nu=\begin{pmatrix}2B&0\\0&0\end{pmatrix}.
\end{equation}
For symmetric $A$, set $P_q(A)=(A-JAJ)/2$, the orthogonal projection to $\operatorname{Sym}(\h)\cap\mathfrak{so}(q)$.

We use the curvature formula of \cite[Theorem~3.1, equation~(18)]{FuLaSt2026}, with $\Ric_\nu=M_\nu-B_\nu/2$. The vector $U$ below is the mean-curvature vector of the base algebra. We give the block calculation to fix the normalization of the off-diagonal terms.

\begin{proposition}[{\cite[Theorem~3.1]{FuLaSt2026}}]\label{prop:cotangent-curvature}
The generalized Ricci operator has the following block form and moment expression, with $\eta=\delta H$,
\begin{equation}\label{eq:curvature}
P=\begin{pmatrix}R&\tfrac12K_\eta\\-\tfrac12K_\eta&-R\end{pmatrix}
=P_q\left(\tfrac23M_\nu-\tfrac12B_\nu\right)-\Sym(\ad_U^\nu).
\end{equation}
\end{proposition}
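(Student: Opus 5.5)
The plan is a direct block computation of the right-hand side of \eqref{eq:curvature}, term by term. We then compare it with the left-hand side, using $R=\Ric-\tfrac14H^2$, the Ricci decomposition \eqref{eq:ricci-decomposition} on the base algebra, and the codifferential formula \eqref{eq:codifferential}. Fix an orthonormal basis $e_i$ of $\g$ and the dual basis $e^i$ of $\g^*$, so that $\{e_i,e^i\}$ is orthonormal for the product metric on $\h$. Then write $\nu=\mu+\mu^{\mathrm{coad}}+H$ as a sum of three bracket pieces: the base bracket on $\g\times\g\to\g$, the two coadjoint pieces $\g\times\g^*\to\g^*$, and the torsion piece $\g\times\g\to\g^*$. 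Since $M_\nu$ is quadratic in $\nu$, we expand it into diagonal contributions and cross terms, and record each contribution in $2\times2$ block form.

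The first main step computes the diagonal blocks of $M_\nu$ from \eqref{eq:moment-operator}.

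On the $\g\times\g$ block, the base bracket contributes $M_\mu$. The coadjoint pieces contribute terms of the form $-\tfrac12\sum\langle(\ad_X)^*e^j,(\ad_Y)^*e^j\rangle$ (counted twice, for both argument orders), together with output terms. These combine into multiples of $-\tfrac12\sum_i\langle[X,e_i],[Y,e_i]\rangle$ and $\tfrac14\sum\langle[e_i,e_j],X\rangle\langle[e_i,e_j],Y\rangle$. The torsion piece contributes $-\tfrac12H^2$.

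On the $\g^*\times\g^*$ block, there is the coadjoint input term and an output term $\tfrac14 H^2$.

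I expect the combination to be $M_\nu|_{\g\g}=\tfrac32(M_\mu)$-type terms $-\tfrac12 H^2$, plus something in the dual block. The projection $P_q$ then averages the $\g\g$ block with minus the transposed $\g^*\g^*$ block, via $P_q(A)=(A-JAJ)/2$. After multiplying by $\tfrac23$, the terms should assemble into $M_\mu-\tfrac14H^2$ on the $\g$-block, and into its negative on the dual block. Next, $-\tfrac12B_\nu$ from \eqref{eq:double-killing} projects to $\mathrm{diag}(-\tfrac12B,\tfrac12B)$. Finally, $\ad_U^\nu$ has diagonal blocks $\ad_U$ and $-(\ad_U)^*$ and off-diagonal block $\iota_UH$. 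So $-\Sym(\ad_U^\nu)$ supplies $\mp\Sym\ad_U$ on the diagonal, which yields $\pm R$ via \eqref{eq:ricci-decomposition}.

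The second step handles the off-diagonal blocks. These come from the cross terms between the torsion piece and the base and coadjoint pieces in $M_\nu$. The relevant sums are of the shape $\sum_{i,j}H(e_i,e_j,\cdot)\langle[e_i,e_j],\cdot\rangle$, which is essentially $d^\dagger H$, and $\sum_i H([X,e_i],e_i,\cdot)$-type terms. After applying $P_q$, which on off-diagonal blocks takes the $J$-antisymmetrized part, and factoring in $\tfrac23$, these should give $\tfrac12K_{d^\dagger H}$. The off-diagonal part of $-\Sym(\ad_U^\nu)$ is $\mp\tfrac12 K_{\iota_UH}$ with the appropriate sign. Adding it converts $d^\dagger H$ into $\delta H=\eta$ by \eqref{eq:codifferential}, giving the entries $\pm\tfrac12K_\eta$.

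The main obstacle is bookkeeping of the normalizations. These include the factor $\tfrac23$, the doubled form $q$, full versus exterior contraction in $d^\dagger$ (the degree factor in \eqref{eq:norm-convention}), and the sign of the cross terms. I would therefore first check the whole identity on a small test case: the abelian $\g$ with $H\neq0$, where $R=-\tfrac14H^2$ and $\eta=0$. This fixes the $\tfrac23$ and the $H^2$ coefficients. I would then check a one-dimensional non-unimodular extension with $H=0$ to fix the $U$-terms, before trusting the general computation.
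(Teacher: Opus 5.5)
Your plan is the paper's proof: expand $M_\nu$ blockwise from \eqref{eq:moment-operator}, apply $P_q$ (which sees only $(M_\nu)_{\g\g}-(M_\nu)_{\g^*\g^*}$ on the diagonal and the $J$-antisymmetrized mixed block), add $P_q(B_\nu)=\operatorname{diag}(B,-B)$ and the blocks of $\Sym(\ad_U^\nu)$, and conclude with \eqref{eq:ricci-decomposition} and $\delta=d^\dagger+\iota_U$. Your preview misplaces some terms, so note the following when you carry out the bookkeeping.
\begin{itemize}
\item The coadjoint pieces enter $(M_\nu)_{\g\g}$ only once, through the input term, and contribute no output term there.
\item Their output term $\tfrac12A_\mu$, with $A_\mu(X,Y)=\sum_i\langle[X,e_i],[Y,e_i]\rangle$, lands in the dual block. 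With $O_\mu(X,Y)=\sum_{i,j}\langle[e_i,e_j],X\rangle\langle[e_i,e_j],Y\rangle$, the diagonal blocks are $(M_\nu)_{\g\g}=-A_\mu+\tfrac14O_\mu-\tfrac12H^2$ and $(M_\nu)_{\g^*\g^*}=\tfrac12A_\mu-\tfrac12O_\mu+\tfrac14H^2$.
\item No $\sum_iH([X,e_i],e_i,\cdot)$ term appears in the mixed block. Both mixed contributions have the single form $G_{\ell k}=\sum_{i,j}\langle[e_i,e_j],e_\ell\rangle H(e_k,e_i,e_j)$ or its transpose, so $(M_\nu)_{\g^*\g}=-\tfrac12G+\tfrac14G^*$.
\item Since $K_{d^\dagger H}=\tfrac12(G-G^*)$, the projected lower-left block is $-\tfrac34K_{d^\dagger H}$.
\end{itemize}
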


\begin{proof}
Let $e=d^\dagger H$, $h=\iota_UH$ and $S=\Sym(\ad_U)$. Thus $\eta=e+h$. The bracket \eqref{eq:cotangent-bracket} has one copy of the original bracket, two copies of its coadjoint coefficients, and the three-form component. This gives the norm identity in \cite[equation~(40)]{FuLaSt2024},
\begin{equation}\label{eq:double-norm}
\|\nu\|^2=3\|\mu\|^2+|H|^2.
\end{equation}
To check the diagonal moment blocks, put $A_\mu(X,Y)=\sum_i\langle\mu(X,e_i),\mu(Y,e_i)\rangle$ and $O_\mu(X,Y)=\sum_{i,j}\langle\mu(e_i,e_j),X\rangle\langle\mu(e_i,e_j),Y\rangle$. Here $A_\mu$ and $O_\mu$ denote symmetric operators through the metric, so $M_\mu=-A_\mu/2+O_\mu/4$. Substitution in \eqref{eq:moment-operator} gives
\begin{equation}\label{eq:double-diagonal-moment}
(M_\nu)_{\g\g}=-A_\mu+\tfrac14O_\mu-\tfrac12H^2,\qquad
(M_\nu)_{\g^*\g^*}=\tfrac12A_\mu-\tfrac12O_\mu+\tfrac14H^2.
\end{equation}
To compute the off-diagonal blocks, write $\mu_{ij}^{\ell}=\langle\mu(e_i,e_j),e_\ell\rangle$ and $H_{kij}=H(e_k,e_i,e_j)$. Set $G_{\ell k}=\sum_{i,j}\mu_{ij}^{\ell}H_{kij}$. Direct contraction in \eqref{eq:moment-operator} and the exterior adjoint identity give
\begin{equation}\label{eq:double-mixed-contractions}
N_\nu:=(M_\nu)_{\g^*\g}=-\tfrac12G+\tfrac14G^*,\qquad
K_{d^\dagger H}=\tfrac12(G-G^*).
\end{equation}
Here $*$ is matrix transpose in the chosen orthonormal basis. Since $M_\nu$ is symmetric, its opposite block is $N_\nu^*$. Equation \eqref{eq:double-mixed-contractions} therefore yields
\begin{equation}\label{eq:double-off-diagonal-projection}
(P_q(M_\nu))_{\g^*\g}
=\tfrac12(N_\nu-N_\nu^*)
=-\tfrac38(G-G^*)
=-\tfrac34K_{d^\dagger H}.
\end{equation}
Combining \eqref{eq:double-diagonal-moment} and \eqref{eq:double-off-diagonal-projection}, we obtain
\begin{equation}\label{eq:projected-double-moment}
\frac23P_q(M_\nu)=
\begin{pmatrix}
M_\mu-\tfrac14H^2&\tfrac12K_e\\
-\tfrac12K_e&-M_\mu+\tfrac14H^2
\end{pmatrix}.
\end{equation}
The adjoint $d^\dagger$ in \eqref{eq:double-mixed-contractions} is taken for the standard exterior inner products. With full contractions, its defining pairing is $\langle d^\dagger H,\alpha\rangle=\langle H,d\alpha\rangle/3$ for $\alpha\in\Lambda^2\g^*$. Thus the degree factor in \eqref{eq:norm-convention} is retained in the off-diagonal calculation.

Equation \eqref{eq:double-killing} gives $P_q(B_\nu)=\operatorname{diag}(B,-B)$. The adjoint formula in \cite[equation~(23)]{FuLaSt2026}, or directly \eqref{eq:cotangent-bracket}, gives
\begin{equation}\label{eq:double-modular-blocks}
\Sym(\ad_U^\nu)=
\begin{pmatrix}S&-\tfrac12K_h\\\tfrac12K_h&-S\end{pmatrix}.
\end{equation}
Combining \eqref{eq:projected-double-moment}, \eqref{eq:double-modular-blocks} and \eqref{eq:ricci-decomposition} gives exactly \eqref{eq:curvature}. In particular, the off-diagonal form is $e+h=\delta H$. Replacing it by $d^\dagger H$ would lose the geometric correction used in the global identity.
\end{proof}

\begin{proposition}[{\cite[Proposition~7.4 and Lemma~7.6]{FuLaSt2024}}]\label{prop:cotangent-derivation}
Under \eqref{eq:expanding-reduction}, the endomorphism
\begin{equation}\label{eq:double-derivation}
L=\begin{pmatrix}D&0\\-K_\eta&2cI-D\end{pmatrix}
\end{equation}
is a derivation of $\nu$. Moreover, $L-cI\in\mathfrak{so}(q)$ and $P=\Sym L-cI$.
\end{proposition}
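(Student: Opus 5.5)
The plan is to check the derivation property of $L$ directly against the bracket \eqref{eq:cotangent-bracket}, treating the three bracket types separately. After that, the two algebraic identities follow from block computations.

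First, the pair $\g\times\g$. Write $L(X)=DX-K_\eta X$, where $K_\eta X$ is viewed in $\g^*$ as $\iota_X\eta$ up to sign. We need
\[
L[X,Y]_\nu=[LX,Y]_\nu+[X,LY]_\nu .
\]
The $\g$-component reduces to $D[X,Y]=[DX,Y]+[X,DY]$, which holds because $D\in\Der(\g)$. The $\g^*$-component has three contributions. The left side contributes $(2c-D^*)H(X,Y,\cdot)-\eta([X,Y],\cdot)$. The right side contributes $H(DX,Y,\cdot)+H(X,DY,\cdot)$, together with the coadjoint terms $-\ad^{\mathrm{coad}}_Y(\iota_X\eta)+\ad^{\mathrm{coad}}_X(\iota_Y\eta)$. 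The $H$-terms combine to $2cH+\rho(D)H$ after one uses $D=D^*$ to write $D^*$ acting on the last slot. The coadjoint terms and the $\eta([X,Y],\cdot)$ term combine to $(d\eta)(X,Y,\cdot)$, with signs to be fixed by the convention in \eqref{eq:cotangent-bracket}. So the identity becomes exactly $d\eta=2\lambda H-\rho(D)H$ with $\lambda=-c$, the torsion equation in \eqref{eq:soliton}. Tracking the sign and normalization of $K_\eta$ versus $\iota\eta$ is the main bookkeeping obstacle. I would fix it by testing on basis elements and comparing with the $\Delta H$ form of the equation.

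Second, the pair $\g\times\g^*$. Here $[X,\zeta]_\nu=\ad^{\mathrm{coad}}_X\zeta$, and $L$ acts on $\g^*$ as $2c-D^*$, since $D^*=D$ under the musical identification. The required identity is
\[
(2c-D^*)(\ad^{\mathrm{coad}}_X\zeta)=\ad^{\mathrm{coad}}_{DX}\zeta+\ad^{\mathrm{coad}}_X\bigl((2c-D^*)\zeta\bigr)+[\,-K_\eta X,\zeta]_\nu .
\]
The last bracket vanishes because $\g^*$ is abelian. The remaining equation is the dual of the derivation property of $D$. Third, the pair $\g^*\times\g^*$ is trivial, since both sides vanish.

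Finally, the algebraic claims. We have
\[
L-cI=\begin{pmatrix}D-c&0\\-K_\eta&c-D\end{pmatrix}.
\]
To show $L-cI\in\mathfrak{so}(q)$, check $A^*J+JA=0$. The diagonal blocks give $(D-c)^*+(c-D)=0$, using $D=D^*$. The off-diagonal block gives $-K_\eta^*-K_\eta=0$, by skewness of $K_\eta$. For the last claim, $\Sym L$ has diagonal blocks $D$ and $2c-D$ and off-diagonal blocks $\mp\tfrac12K_\eta$. Subtracting $cI$ gives diagonal blocks $D-c=R$ and $c-D=-R$, because $R=\lambda I+D=D-c$ by \eqref{eq:soliton}. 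Comparing with \eqref{eq:curvature} yields $P=\Sym L-cI$, provided the off-diagonal sign matches. This sign is consistent with the convention chosen in the first step, and I would adjust the sign of $K_\eta$ in $L$ if necessary so that both statements agree.
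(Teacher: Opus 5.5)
This is essentially the paper's proof: the base and mixed components of the derivation identity follow from $D\in\Der(\g)$, the covector component on $\g\times\g$ reduces to the torsion equation, and $L^*J+JL=2cJ$ and the formula for $\Sym L$ are block multiplications.

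The signs you deferred do close with $L$ exactly as stated, but not as you wrote them. Since $\langle K_\eta X,Y\rangle=\eta(X,Y)$, one has $K_\eta X=\iota_X\eta$ exactly. The right side therefore contributes $+\ad^{\mathrm{coad}}_Y(\iota_X\eta)-\ad^{\mathrm{coad}}_X(\iota_Y\eta)$, the opposite of your expression; with your signs the $\eta$-terms do not assemble into $\pm d\eta$. With the correct signs, the $\eta$-terms of the defect are $-\eta([X,Y],Z)+\eta(X,[Y,Z])-\eta(Y,[X,Z])=d\eta(X,Y,Z)$, so the defect is $(d\eta+2cH+\rho(D)H)(X,Y,Z)$, which vanishes by the torsion equation with $\lambda=-c$.

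Likewise, $\Sym L$ has upper-right block $+\tfrac12K_\eta$ and lower-left block $-\tfrac12K_\eta$, matching \eqref{eq:curvature} directly. So no adjustment of the sign in $L$ is needed, and none would be permitted, since the statement fixes $L$.
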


\begin{proof}
The base and mixed components of the derivation identity follow from $D\in\Der(\g)$. To check the remaining component, replace $-K_\eta$ in \eqref{eq:double-derivation} by $K_b$, where $b$ is any two-form, and denote the resulting operator by $L_b$. For $X,Y,Z\in\g$, the covector component of its derivation defect is
\begin{equation}\label{eq:double-derivation-defect}
\begin{aligned}
&\bigl(\pr_{\g^*}\{L_b\nu(X,Y)-\nu(L_bX,Y)-\nu(X,L_bY)\}\bigr)(Z)\\
&\quad=b([X,Y],Z)-b(X,[Y,Z])+b(Y,[X,Z])\\
&\qquad\quad+2cH(X,Y,Z)-H(DX,Y,Z)-H(X,DY,Z)-H(X,Y,DZ)\\
&\quad=\bigl(-db+2cH+\rho(D)H\bigr)(X,Y,Z).
\end{aligned}
\end{equation}
For $b=-\eta$, equation~\eqref{eq:double-derivation-defect} vanishes by the torsion equation in \eqref{eq:soliton}, since $\lambda=-c$. Direct multiplication gives $L^*J+JL=2cJ$. Taking the symmetric part in \eqref{eq:double-derivation} and using $R=-cI+D$ proves the last assertion.
\end{proof}

The lower-left block of $L$ is retained throughout the argument. In particular, Proposition~\ref{prop:cotangent-derivation} does not assume that $L$ is symmetric.

Identify $\af^*\subset\g^*$ with the annihilator of $\n$. The subspace $I=\n\oplus\g^*$ is an ideal of $\h$, and $I^{\perp_q}=\af^*\subset I$. The orthogonal complement of an ideal in a quadratic Lie algebra is again an ideal \cite[Proposition~2.1(i)]{Bo1997}. Thus $\af^*$ is an ideal of $\h$. It is central in $I$, since $[\n,\g]\subset\n$, but need not be central in $\h$. The radical of the restriction of $q$ to $I$ is exactly $\af^*$.

\begin{proposition}\label{prop:nilpotent-quotient}
The quotient
\begin{equation}\label{eq:nilpotent-quotient}
W=(\n\oplus\g^*)/\af^*\simeq\n\oplus\n^*
\end{equation}
has an induced nilpotent quadratic bracket $\nu_0$. Equip $W$ with the quotient Euclidean metric. This metric and the induced neutral form $q_W$ identify with the standard forms on $\n\oplus\n^*$. The derivation $L$ and the inner derivations $\ad_A^\nu$, $A\in\af$, induce derivations of $\nu_0$.
\end{proposition}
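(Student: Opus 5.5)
The plan has four steps. First I show that $I=\n\oplus\g^*$ is a subalgebra with ideal $\af^*$. Then I identify the quotient with its forms, prove nilpotency, and finally pass the derivations to the quotient.

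\emph{Step 1: $I$ is an ideal and $\af^*$ is central in $I$.} For $X\in\g$, $Z\in\n$ and covectors $\xi,\zeta$, the bracket \eqref{eq:cotangent-bracket} gives
\[
[X+\xi,Z+\zeta]_\nu=[X,Z]+\ad^{\mathrm{coad}}_X\zeta-\ad^{\mathrm{coad}}_Z\xi+H(X,Z,\cdot).
\]
This lies in $I$ because $[\g,\n]\subset\n$, so $I$ is an ideal. As recalled above, $I^{\perp_q}=\af^*$ is an ideal of $\h$. It is central in $I$: for $\alpha\in\af^*$ and $Z\in\n$, $\ad^{\mathrm{coad}}_Z\alpha=-\alpha\circ\ad_Z$ vanishes, since $\ad_Z\g\subset\n$ and $\alpha$ annihilates $\n$. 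Hence $\nu$ restricted to $I$ descends to a bracket $\nu_0$ on $W=I/\af^*$.

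\emph{Step 2: the forms on $W$.} Since $\af^*$ is the radical of $q|_I$, the induced form $q_W$ is nondegenerate. It is invariant because $q$ is. The restriction map $\g^*\to\n^*$ has kernel $\af^*$, which gives $W\simeq\n\oplus\n^*$. The Euclidean orthogonal complement of $\af^*$ in $I$ is $\n\oplus\n^*$, where $\n^*$ is realized as $\n^\flat$. So the quotient metric is the product metric on $\n\oplus\n^*$, and $q_W$ becomes the standard pairing.

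\emph{Step 3: nilpotency, the main step.} I compute $\nu_0$ on $\n\oplus\n^*$. For $Z,Z'\in\n$ and $\xi,\zeta\in\n^*$, it is
\[
[Z,Z']+\ad^{\mathrm{coad}}_Z\zeta-\ad^{\mathrm{coad}}_{Z'}\xi+H(Z,Z',\cdot)|_\n,
\]
with the coadjoint action taken within $\n$. This is a twisted cotangent algebra of $\n$ with closed three-form $H|_\n$. To prove nilpotency, I filter by the lower central series $\n=\n^1\supset\n^2\supset\cdots\supset\n^k\supset 0$. Define weights $\n^i$ of weight $i$ and $\n^*$-pieces dual to $\n^{i}/\n^{i+1}$ of weight $i-k$; alternatively, I argue directly that each $\ad^{\nu_0}_w$ is nilpotent and apply Engel's theorem. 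The direct argument goes as follows. Each $\ad_w$ is block lower-triangular with diagonal blocks $\ad_Z$ and $-(\ad_Z)^*$ restricted to $\n$, and both are nilpotent because $\n$ is nilpotent. The lower-left block maps $\n\to\n^*$. A block-triangular operator with nilpotent diagonal blocks is nilpotent, so Engel's theorem gives nilpotency of $\nu_0$.

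\emph{Step 4: derivations descend.} An operator that is a derivation of $\nu$ and preserves both $I$ and $\af^*$ induces a derivation of $\nu_0$. For $\ad^\nu_A$ this holds because $I$ and $\af^*$ are ideals. For $L$ I check the blocks. The top-left block $D$ preserves $\n$ by Lemma~\ref{lem:general-nilradical}. The lower-left block $-K_\eta$ maps into $\g^*$, and $2cI-D$ preserves $\g^*$, so $L(I)\subset I$. On $\af^*$, $L$ acts by $2cI-D^*$ in the dual picture; since $D$ preserves $\n$, $D^*$ preserves the annihilator $\af^*$. The remaining care is keeping the musical identifications consistent. The derivation identity then passes to the quotient.
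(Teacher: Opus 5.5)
Your proof is correct and follows the paper's argument: $I$ is an ideal with $\af^*=I^{\perp_q}$ the radical of $q|_I$, the Euclidean complement $\n\oplus\n^*$ gives the metric identification, and block-triangular adjoints with nilpotent diagonal blocks together with Engel's theorem give nilpotency. As in the paper, $L$ and $\ad^\nu_A$ descend because they preserve $I$ and $\af^*$. The weight assignment $i-k$ for the dual pieces that you mention in passing would not work as stated, since nonpositive weights give no nilpotency, but you do not rely on it.
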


\begin{proof}
The two ideal inclusions give the quotient bracket, and $q$ induces a nondegenerate invariant form by \cite[Proposition~2.3(iii)]{Bo1997}. The Euclidean orthogonal complement of $\af^*$ in $I$ is $\n\oplus\n^*$, which gives the asserted metric identification. Under \eqref{eq:nilpotent-quotient}, the bracket is the cotangent bracket of $\n$ twisted by the closed form $\gamma=H|_{\Lambda^3\n}$. Its nilpotency is a special case of \cite[Theorem~3.1(i)]{Bo1997}. For completeness, the adjoint of any $X+\xi\in\n\oplus\n^*$ is block triangular, with diagonal blocks $\ad_X|_\n$ and $-(\ad_X|_\n)^*$. Both are nilpotent, so the full adjoint is nilpotent. Engel's theorem proves the assertion.

The operator $L$ preserves both ideals defining the quotient because $D$ preserves $\n$ and $\af$ by Lemma~\ref{lem:general-nilradical}. Inner derivations preserve these ideals as well. They therefore induce derivations on $W$.
\end{proof}

Choose an orthonormal basis $a_1,\ldots,a_r$ of $\af$. On $W$, write
\begin{equation}\label{eq:quotient-operators}
T_i=\begin{pmatrix}A_i&0\\K_{h_i}&-A_i^*\end{pmatrix},\qquad
L_0=\begin{pmatrix}D_\n&0\\-K_{\eta|_\n}&2cI-D_\n\end{pmatrix},
\end{equation}
where $A_i=\ad_{a_i}|_\n$ and $h_i=H(a_i,\cdot,\cdot)|_{\n\times\n}$. If $J_W$ denotes the induced neutral involution, then
\begin{equation}\label{eq:quotient-ray}
T_i\in\Der(\nu_0)\cap\mathfrak{so}(q_W),\qquad
L_0-cI\in\mathfrak{so}(q_W),\qquad
\Theta(L_0-cI)\nu_0=c\nu_0.
\end{equation}
The last identity follows from $\Theta(L_0)\nu_0=0$ and $\Theta(I)\nu_0=-\nu_0$. Cartan's formula and $dH=0$ give $d_\n h_i=\rho(A_i)\gamma$, where $d_\n$ is the Chevalley--Eilenberg differential on $\n$. Thus the lower-left block of $T_i$ accounts for the possible failure of $\gamma$ to be invariant under $A_i$.

\begin{remark}\label{rem:quotient-not-subalgebra}
The identification in \eqref{eq:nilpotent-quotient} is an identification of a quotient with a Euclidean representative. Before eliminating the components $H(\af,\g,\g)$, the representative $\n\oplus\n^*$ need not be a subalgebra of $\h$, since its bracket may have an $\af^*$ component. All uses of $\nu_0$, $T_i$ and $L_0$ refer to the quotient.
\end{remark}

The argument below supplies the positive operator used in the global trace identity. We apply the real geometric invariant theory of \cite{BoLa2020} to the orthogonal group of the invariant neutral form. For the general linear action on nilpotent brackets, the corresponding positivity statement is \cite[Lemma~2.17]{La2010}. We adapt its proof to the representation used here.

Let $W$ be a real Euclidean vector space, and let $J=J^*=J^{-1}$ define a nondegenerate symmetric form $q(X,Y)=\langle JX,Y\rangle$. Set $G=\mathrm O(q)$, $K=G\cap\mathrm O(W)$ and
\begin{equation}\label{eq:quadratic-cartan-decomposition}
\mathfrak g_q=\mathfrak{so}(q),\qquad
\mathfrak p_q=\{A\in\operatorname{Sym}(W):AJ+JA=0\}.
\end{equation}
We equip $\mathfrak g_q$ with the Hilbert--Schmidt inner product. The space
\begin{equation}\label{eq:quadratic-bracket-representation}
\mathcal V_q=\{\mu\in\Lambda^2W^*\otimes W:q(\mu(\cdot,\cdot),\cdot)\in\Lambda^3W^*\}
\end{equation}
is $G$-invariant under change of basis. Its infinitesimal action is
\begin{equation}\label{eq:quadratic-theta-action}
(\Theta(A)\mu)(X,Y)=A\mu(X,Y)-\mu(AX,Y)-\mu(X,AY).
\end{equation}
The identification with $\Lambda^3W^*$ given by $q$ intertwines this action with the covariant tensor action. All tensor norms in this section are full contraction norms. In particular, $\Theta(A)^*=\Theta(A^*)$ for $A\in\mathfrak g_q$.

Following the normalization of \cite[Definition~1.2]{BoLa2020}, define the moment map $m_q:\mathcal V_q\setminus\{0\}\longrightarrow\mathfrak p_q$ by
\begin{equation}\label{eq:quadratic-moment-definition}
\langle m_q(\mu),A\rangle
=\frac{\langle\Theta(A)\mu,\mu\rangle}{\|\mu\|^2},
\qquad A\in\mathfrak p_q.
\end{equation}
If $M_\mu$ is the usual bracket moment operator, normalized by $4\operatorname{tr}(M_\mu A)=\langle\Theta(A)\mu,\mu\rangle$, then
\begin{equation}\label{eq:quadratic-moment-projection}
m_q(\mu)=\frac{4}{\|\mu\|^2}\operatorname{pr}_{\mathfrak p_q}M_\mu
=\frac{2}{\|\mu\|^2}(M_\mu-JM_\mu J).
\end{equation}
The moment-map interpretation for Dorfman brackets and the neutral orthogonal group is given in \cite[Remark~7.10]{FuLaSt2024}. Formula \eqref{eq:quadratic-moment-projection} fixes the scale-invariant normalization used here on $\mathcal V_q$.

To apply the faithful closed-image formulation of \cite{BoLa2020}, consider the representation of $G$ on $\mathcal V_q\oplus W$, with the standard action on $W$. This representation is faithful and has closed image, since its second component recovers the original matrix in $G$. It is compatible with the given inner products. We retain the Hilbert--Schmidt inner product on $\mathfrak g_q\subset\operatorname{End}(W)$, which satisfies the compatibility conditions in \cite[Section~1]{BoLa2020}. At $(\mu,0)$ the moment map is $m_q(\mu)$. The gradient of its squared norm has zero $W$-component, so the critical point equation is unchanged. The spectral slices restrict to the corresponding slices on $\mathcal V_q\oplus\{0\}$, and the orbit closures used in semistability and in the scaled critical limit remain in this invariant subspace. The following proposition therefore applies with the normalization \eqref{eq:quadratic-moment-definition}.

\begin{proposition}[{\cite[Theorem~1.3, Lemma~7.13 and Corollaries~7.15 and~8.4]{BoLa2020}}]\label{prop:quadratic-git-input}
For every $0\ne\mu\in\mathcal V_q$, there is $\beta\in\mathfrak p_q$ with the following properties. Set $s=\|\beta\|^2$, and let $V_\beta^{\ge0}$ and $V_\beta^0$ denote the nonnegative spectral subspace and the kernel of $\Theta(\beta)-sI_{\mathcal V_q}$. Let $p_\beta:V_\beta^{\ge0}\to V_\beta^0$ be the orthogonal projection. Set
\begin{equation}\label{eq:quadratic-slice-subgroup}
\mathfrak h_\beta=\{A\in\mathfrak g_q:[A,\beta]=0,\ \langle A,\beta\rangle=0\}.
\end{equation}
Let $K_\beta=\{k\in K:k\beta=\beta k\}$ and $H_\beta=K_\beta\exp(\mathfrak h_\beta\cap\mathfrak p_q)$. Set $U_\beta=\{v\in V_\beta^{\ge0}:0\notin\overline{H_\beta\cdot p_\beta(v)}\}$. The associated parabolic subgroup is
\begin{equation}\label{eq:quadratic-parabolic}
Q_\beta=\left\{g\in G:\lim_{t\to\infty}\exp(-t\beta)g\exp(t\beta)\text{ exists in }G\right\}.
\end{equation}
Its Lie algebra $\mathfrak q_\beta$ is the sum of the nonnegative eigenspaces of $\operatorname{ad}\beta$ on $\mathfrak g_q$, by \cite[Lemmas~B.2--B.3]{BoLa2020}. Then the following assertions hold.
\begin{enumerate}
\item The current tensor $\mu$ belongs to $U_\beta$. In particular, $\mu_{\min}=p_\beta(\mu)$ is nonzero and $H_\beta$-semistable. The projection formula in \cite[Lemma~7.5]{BoLa2020} gives
\begin{equation}\label{eq:quadratic-lowest-projection}
\mu_{\min}=\lim_{t\to\infty}e^{st}\exp(-t\beta)\cdot\mu.
\end{equation}
\item There is a nonzero critical point $\mu_c\in\overline{\mathbb R_{>0}G\cdot\mu}$ of $\|m_q\|^2$ such that $m_q(\mu_c)=\beta$.
\item The set $U_\beta$ is a cone. For $v\in U_\beta$ and $g\in G$, one has $g\cdot v\in U_\beta$ if and only if $g\in Q_\beta$.
\end{enumerate}
\end{proposition}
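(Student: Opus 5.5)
Proposition~\ref{prop:quadratic-git-input} is a transcription of results of \cite{BoLa2020}, so the proof consists of checking that the hypotheses of that paper hold for the $G$-representation $\mathcal V_q\oplus W$ and then reading off the conclusions at points $(\mu,0)$. First I will verify the standing assumptions of \cite[Section~1]{BoLa2020}. The group $G=\mathrm O(q)$ is real reductive and closed under transpose because $J=J^*=J^{-1}$. Its Cartan decomposition is $\mathfrak g_q=\mathfrak k_q\oplus\mathfrak p_q$, where $\mathfrak k_q$ consists of the skew elements, and $K=G\cap\mathrm O(W)$. The representation is orthogonal for $K$ and symmetric for $\mathfrak p_q$, since $\Theta(A)^*=\Theta(A^*)$ on full-contraction tensors and the standard action on $W$ has the same property. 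The Hilbert--Schmidt product is $\mathrm{Ad}(K)$-invariant. Faithfulness and closed image come from the $W$-component, as the paper already notes.

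Next I will relate the moment maps. At $(\mu,0)$, the \cite{BoLa2020} moment map pairs with $A\in\mathfrak p_q$ to give $\langle\Theta(A)\mu,\mu\rangle/\|\mu\|^2$, which is \eqref{eq:quadratic-moment-definition}. The identity \eqref{eq:quadratic-moment-projection} then follows from \eqref{eq:moment-identity}, using the fact that $A\mapsto(A-JAJ)/2$ is the orthogonal projection of $\mathrm{Sym}(W)$ onto $\mathfrak p_q$. Because the subspace $\mathcal V_q\oplus\{0\}$ is $G$-invariant, the negative gradient flow of $\|m\|^2$, the orbit closures, and the scaled limits all remain in it. For the same reason, the $\Theta(\beta)$-spectral slices of $\mathcal V_q\oplus W$ meet this subspace in the slices of $\mathcal V_q$.

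With these checks done, I will apply the stratification theorem \cite[Theorem~1.3]{BoLa2020}. It places $\mu$ in a unique stratum $\mathfrak S_\beta$, defined by $\beta\in\mathfrak p_q$ and $s=\|\beta\|^2$. For $\mu\neq0$ we have $s>0$ unless $\mu$ is semistable, and the semistable case is included in the theorem's conventions with $\beta=0$. Item (1) is \cite[Lemma~7.13]{BoLa2020}, which gives $\mu\in U_\beta$. The limit formula \eqref{eq:quadratic-lowest-projection} follows from \cite[Lemma~7.5]{BoLa2020}: decompose $\mu$ into $\Theta(\beta)$-eigencomponents with eigenvalues at least $s$, and observe that $e^{st}\exp(-t\beta)$ kills every component with eigenvalue strictly greater than $s$. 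Item (2) is \cite[Corollary~7.15]{BoLa2020}, applied to the scaled limit of the gradient flow. Item (3) is \cite[Corollary~8.4]{BoLa2020}, and the description of $\mathfrak q_\beta$ as the sum of the nonnegative eigenspaces is \cite[Lemmas~B.2--B.3]{BoLa2020}.

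The main obstacle is matching sign and scale conventions. One has to confirm that the eigenvalue condition defining $V_\beta^{\ge0}$, namely eigenvalues of $\Theta(\beta)-sI$ that are at least $0$, agrees with the convention in \cite{BoLa2020} once our $\Theta$ is used in place of theirs. If the conventions are opposite, I will replace $\beta$ by $-\beta$ and correspondingly reverse the direction of $t$ in \eqref{eq:quadratic-parabolic}. I will check the orientation on a simple example: for $\mu$ of degree one, $\Theta(I)\mu=-\mu$, and this fixes how $\beta$ must act.
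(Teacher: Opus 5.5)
Your route is the paper's: check the hypotheses of \cite{BoLa2020} for the faithful, closed-image representation on $\mathcal V_q\oplus W$, use invariance of $\mathcal V_q\oplus\{0\}$ so that slices, orbit closures and scaled limits stay there, and read off the conclusions. The missing step is the one that makes a \emph{single} label $\beta$ work for the current tensor in all three items. The stratification in \cite{BoLa2020} uses labels $\beta_0$ taken from a fixed set of representatives of $K$-conjugacy classes. Its slice theorem places $\mu$ in $U_{\beta_0}$ only after moving it by some $k\in K$. So it gives $k\cdot\mu\in U_{\beta_0}$, not $\mu\in U_{\beta_0}$ as you claim for item (1). Likewise, a critical point obtained from the gradient flow of $\mu$ has its $m_q$-value determined only up to $K$-conjugation. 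Your item (2) therefore need not produce the same $\beta$ as your item (1). This coherence matters later. Lemma~\ref{lem:quadratic-git-inequalities} uses $\mu\in U_\beta$ and $Q_\beta$, Proposition~\ref{prop:quadratic-positive-operator} uses $m_q(\mu_c)=\beta$, and Remark~\ref{rem:quadratic-git-distinction} relies on these being the same label.

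The repair is the paper's remark after the statement. Put $\beta=k^{-1}\beta_0k\in\mathfrak p_q$ and write $\pi$ for the representation. Then $\Theta(\beta)=\pi(k)^{-1}\Theta(\beta_0)\pi(k)$, with $\pi(k)$ orthogonal and $\|\beta\|=\|\beta_0\|$. Hence $V_\beta^{\ge0}$, $p_\beta$, $H_\beta$, $U_\beta$ and $Q_\beta$ are the transports by $k^{-1}$ of the corresponding objects for $\beta_0$. It follows that $\mu\in U_\beta$, and items (1) and (3) hold for this $\beta$. For item (2), suppose $\mu_\infty\in\overline{\mathbb R_{>0}G\cdot\mu}$ is critical with $m_q(\mu_\infty)=k'\beta k'^{-1}$ for some $k'\in K$. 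Then $k'^{-1}\cdot\mu_\infty$ lies in the same $G$-invariant closure and is still critical. By $K$-equivariance of $m_q$, it satisfies $m_q(k'^{-1}\cdot\mu_\infty)=\beta$.

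Separately, the orientation issue you single out as the main obstacle does not arise. The identity $\exp(-t\beta)\cdot\mu=e^{-t\Theta(\beta)}\mu$ and the critical-point equation $\Theta(\beta)\mu_c=s\mu_c$ at $m_q(\mu_c)=\beta$ already fix every sign. They agree with \cite{BoLa2020} because both moment maps are built from the same action. Replacing $\beta$ by $-\beta$ would break item (2). Your test with $\Theta(I)$ calibrates nothing, since $I\notin\mathfrak p_q$.
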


The label may be chosen for the current Euclidean metric. Indeed, the cited slice theorem first places $\mu$ in a slice by an element of $K$. Conjugating both the label and its auxiliary critical representative back by this element gives the stated formulation.

The slice inequality and the derivation assertions below are consequences of \cite[Lemma~9.1 and Corollary~9.2]{BoLa2020}. We include a proof that also gives the trace identity for an endomorphism acting by a scalar on $\mu$.

\begin{lemma}\label{lem:quadratic-git-inequalities}
Let $0\ne\mu\in\mathcal V_q$, and choose $\beta$ as above. Then
\begin{equation}\label{eq:quadratic-slice-inequality}
\langle(\Theta(\beta)-sI)\mu,\mu\rangle\ge0.
\end{equation}
If $T\in\mathfrak{so}(q)\cap\operatorname{Der}(\mu)$, then
\begin{equation}\label{eq:quadratic-stabilizer-inequalities}
\langle[\beta,T],T\rangle\ge0,\qquad
\operatorname{tr}(\beta T)=0.
\end{equation}
Equality in the first inequality holds exactly when $[\beta,T]=0$. More generally, if $A\in\mathfrak{so}(q)$ and $\Theta(A)\mu=a\mu$, then
\begin{equation}\label{eq:quadratic-ray-trace}
\operatorname{tr}(\beta A)=a.
\end{equation}
\end{lemma}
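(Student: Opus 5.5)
Everything will be derived from Proposition~\ref{prop:quadratic-git-input}, namely the limit formula \eqref{eq:quadratic-lowest-projection} and the parabolic characterization in part~(3), together with the fact that $\Theta(\beta)$ is symmetric. We use the self-adjointness $\Theta(A)^*=\Theta(A^*)$ and $\beta\in\mathfrak p_q$ symmetric.

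For the slice inequality \eqref{eq:quadratic-slice-inequality}, decompose $\mu=\sum_{\kappa}\mu_\kappa$ into eigencomponents of $\Theta(\beta)$. By part~(1), $\mu\in U_\beta\subset V_\beta^{\ge0}$. In the paper's convention, $V_\beta^{\ge0}$ is the sum of the eigenspaces of $\Theta(\beta)-sI$ with nonnegative eigenvalue; the limit \eqref{eq:quadratic-lowest-projection} exists precisely when all eigenvalues $\kappa$ occurring in $\mu$ satisfy $\kappa\ge s$. Hence
\[
\langle(\Theta(\beta)-sI)\mu,\mu\rangle=\sum_\kappa(\kappa-s)\|\mu_\kappa\|^2\ge0.
\]

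For a derivation $T\in\mathfrak{so}(q)\cap\Der(\mu)$, the one-parameter group $g_t=\exp(tT)\subset G$ fixes $\mu$. It therefore maps $U_\beta$ to itself at $\mu$, and part~(3) gives $g_t\in Q_\beta$ for all $t$. Consequently $T\in\mathfrak q_\beta$, the sum of the nonnegative eigenspaces of $\ad\beta$ on $\mathfrak g_q$. Since $\beta$ is symmetric, $\ad\beta$ is symmetric for the Hilbert--Schmidt product on $\mathfrak g_q$, which is closed under transpose. Writing $T=\sum_{\kappa\ge0}T_\kappa$, we get $\langle[\beta,T],T\rangle=\sum\kappa\|T_\kappa\|^2\ge0$, with equality iff $T=T_0$, that is, iff $[\beta,T]=0$.

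The trace identities come from the ray identity \eqref{eq:quadratic-ray-trace}, applied with $a=0$ to derivations. Let $A\in\mathfrak{so}(q)$ with $\Theta(A)\mu=a\mu$. Then $\exp(tA)\cdot\mu=e^{-ta}\mu$ (sign per $\Theta$ convention) stays in the cone $U_\beta$, so part~(3) again gives $A\in\mathfrak q_\beta$. Decompose $A=A_0+A_+$ by $\ad\beta$ eigenvalues, with $A_0$ commuting with $\beta$ and $A_+$ in the positive eigenspaces. Then $\tr(\beta A)=\tr(\beta A_0)$, since positive eigenvectors are orthogonal to the $0$-eigenspace containing $\beta$. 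Apply the limit \eqref{eq:quadratic-lowest-projection}: conjugation by $\exp(-t\beta)$ sends $\exp(uA)$ to $\exp(uA_0)$ as $t\to\infty$. Hence $\Theta(A_0)\mu_{\min}=a\mu_{\min}$.

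Next, write $A_0=x\beta/s+A'$ with $\langle A',\beta\rangle=0$ and $A'\in\mathfrak g_q$ commuting with $\beta$; here $x=\tr(\beta A_0)$, since $\beta$ is symmetric. Since $\mu_{\min}\in V_\beta^0$, we have $\Theta(\beta)\mu_{\min}=s\mu_{\min}$, so
\[
\Theta(A')\mu_{\min}=(a-x)\mu_{\min}.
\]
The symmetric part of $A'$ lies in $\mathfrak h_\beta\cap\mathfrak p_q$; its skew part acts skew-adjointly and contributes $0$ to $\langle\Theta(A')\mu_{\min},\mu_{\min}\rangle$. If $a\ne x$, the group $\exp(t\Sym A')\subset H_\beta$ scales $\mu_{\min}$ by $e^{\pm t(a-x)}$ along the ray. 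More carefully, $\langle\Theta(\Sym A')\mu_{\min},\mu_{\min}\rangle=(a-x)\|\mu_{\min}\|^2\ne0$, so by the standard convexity of $t\mapsto\|\exp(tS)\mu_{\min}\|^2$ the orbit norm tends to $0$ in one direction. That contradicts $H_\beta$-semistability in part~(1). Hence $x=a$, giving \eqref{eq:quadratic-ray-trace}, and $a=0$ gives $\tr(\beta T)=0$.

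The main obstacle is this final semistability step. Convexity alone gives only a decreasing direction, not a limit equal to $0$. I would instead invoke the moment-map characterization of semistability in \cite{BoLa2020}: for an $H_\beta$-semistable vector, $\inf\|H_\beta\cdot v\|>0$. The function $\|\exp(tS)v\|^2$ has nonzero derivative $2(a-x)\|v\|^2$ at $0$, so it cannot have a positive minimum along the line unless it is proper. To make this rigorous, I would replace $A'$ by its $\Theta$-eigen decomposition applied to $\mu_{\min}$ exactly as in \cite[Lemma~9.1]{BoLa2020}.
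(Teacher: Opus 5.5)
Your outline follows the paper's proof: the slice inequality from $\mu\in U_\beta\subset V_\beta^{\ge0}$, the inclusions $T,A\in\mathfrak q_\beta$ from part~(3), the $\ad\beta$-eigen decomposition for $\langle[\beta,T],T\rangle$, and the reduction to $\Theta(A_0)\mu_{\min}=a\mu_{\min}$. Your conjugation-limit argument for that last identity is a valid substitute for the paper's remark that $\Theta(A_+)$ raises $\Theta(\beta)$-weights.

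The gap is in the final semistability step, and it comes from your passage to $\Sym A'$. The vector identity $\Theta(A')\mu_{\min}=(a-x)\mu_{\min}$ does not pass to the symmetric part. In general $\Theta(\Sym A')\mu_{\min}$ is not proportional to $\mu_{\min}$, so $\exp(t\Sym A')$ does not move $\mu_{\min}$ along its ray, contrary to what you state. You are left only with the scalar identity $\langle\Theta(\Sym A')\mu_{\min},\mu_{\min}\rangle=(a-x)\|\mu_{\min}\|^2$, and this is compatible with semistability. A convex function $t\mapsto\|\exp(tS)v\|^2$ with nonzero derivative at $t=0$ can have a positive minimum elsewhere; take $v$ with components in $\Theta(S)$-eigenspaces of both signs. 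So your repair via $\inf\|H_\beta\cdot v\|>0$ gives no contradiction, and the closing appeal to an eigen-decomposition is not an argument. As written, \eqref{eq:quadratic-ray-trace} is unproved, and with it $\tr(\beta T)=0$.

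The missing point is that no symmetrization is needed. The algebra $\mathfrak h_\beta$ is defined inside $\mathfrak g_q$, not $\mathfrak p_q$, and your $A'$ (the paper's $B=A_0-s^{-1}\langle\beta,A\rangle\beta$) already belongs to it. Moreover $\exp(tA')\in H_\beta$, which you can check as follows.
\begin{itemize}
\item Write the polar decomposition $\exp(tA')=ke^P$ in $G$. Both factors commute with the symmetric operator $\beta$, so $k\in K_\beta$ and $[P,\beta]=0$.
\item Let $W_\sigma$ be the $\sigma$-eigenspace of $\beta$, and define the character $\chi(g)=\sum_\sigma\sigma\log|\det(g|_{W_\sigma})|$ on the centralizer of $\beta$. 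It vanishes on $K_\beta$ and has differential $X\mapsto\tr(\beta X)$.
\item Hence $\langle P,\beta\rangle=\chi(\exp(tA'))=t\langle A',\beta\rangle=0$, so $P\in\mathfrak h_\beta\cap\mathfrak p_q$.
\end{itemize}
Equivalently, you can cite that $H_\beta$ is the reductive group with Lie algebra $\mathfrak h_\beta$. Then $\exp(tA')\cdot\mu_{\min}=e^{t(a-x)}\mu_{\min}$ exactly. If $a\ne x$ this tends to $0$ in one time direction, contradicting $H_\beta$-semistability; this is precisely the paper's argument.

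Finally, since you divide by $s$, you must treat $s=0$ separately. Then $\beta=0$ and $\mu_{\min}=\mu$ is $G$-semistable, and the same argument applied to $A$ itself gives $a=0=\tr(\beta A)$.
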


\begin{proof}
The spectral condition gives \eqref{eq:quadratic-slice-inequality}. Suppose that $\Theta(A)\mu=a\mu$. Then $\exp(tA)\cdot\mu=e^{at}\mu$ remains in the same slice. The characterization of the parabolic subgroup \eqref{eq:quadratic-parabolic} in Proposition~\ref{prop:quadratic-git-input} gives $A\in\mathfrak q_\beta$. Write $A=A_0+A_+$ according to the zero and positive eigenspaces of $\operatorname{ad}\beta$. If $[\beta,A_r]=rA_r$, then $[\Theta(\beta),\Theta(A_r)]=r\Theta(A_r)$. Thus $\Theta(A_+)$ raises eigenvalues and has zero projection to the lowest spectral subspace. Projecting the ray equation therefore gives $\Theta(A_0)\mu_{\min}=a\mu_{\min}$.

Assume first that $s>0$. Set $B=A_0-s^{-1}\langle\beta,A\rangle\beta\in\mathfrak h_\beta$. Since $\Theta(\beta)\mu_{\min}=s\mu_{\min}$, we obtain
\begin{equation}\label{eq:quadratic-ray-semistability}
\Theta(B)\mu_{\min}
=(a-\langle\beta,A\rangle)\mu_{\min}.
\end{equation}
If the coefficient on the right were nonzero, one time direction of $\exp(tB)$ would carry $\mu_{\min}$ to zero. This contradicts $H_\beta$-semistability. Therefore \eqref{eq:quadratic-ray-trace} holds. If $s=0$, the slice consists of semistable vectors for $G$ itself, and the same argument applied to $A$ gives $a=0$.

For a derivation $T$, take $a=0$. Since $T\in\mathfrak q_\beta$, write $T=\sum_{r\ge0}T_r$, where $[\beta,T_r]=rT_r$. The eigenspaces are orthogonal, and hence
\begin{equation}\label{eq:quadratic-stabilizer-square}
\langle[\beta,T],T\rangle
=\sum_{r>0}r\|T_r\|^2.
\end{equation}
This proves the remaining assertions.
\end{proof}

\begin{proposition}\label{prop:quadratic-positive-operator}
Suppose that $0\ne\mu\in\mathcal V_q$ is a nilpotent Lie bracket. Then the label in Lemma~\ref{lem:quadratic-git-inequalities} satisfies $s>0$, and
\begin{equation}\label{eq:quadratic-positive-operator}
C=\beta+sI_W>0.
\end{equation}
\end{proposition}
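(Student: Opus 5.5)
We adapt the proof of \cite[Lemma~2.17]{La2010} to the quadratic representation $\mathcal V_q$, using Lemma~\ref{lem:quadratic-git-inequalities} and the critical point from Proposition~\ref{prop:quadratic-git-input}(2).

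\textbf{Step 1: $s>0$.} Suppose instead that $s=0$, so $\beta=0$. Then Proposition~\ref{prop:quadratic-git-input} makes $\mu$ semistable for $G=\mathrm O(q)$. Since $\mu$ is nilpotent, I would exhibit an element of $\mathfrak p_q$ that scales $\mu$ to zero. Take a flag of ideals adapted to the lower central series, and choose it compatible with $q$: the invariant form makes the annihilator of an ideal an ideal, and this is how a $q$-isotropic filtration is arranged. A grading element of $\mathfrak{so}(q)$ with strictly increasing weights along this filtration then lies in $\mathfrak p_q$ after a $K$-conjugation, and $\exp(-tA)\cdot\mu\to0$. This contradicts semistability.

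A cleaner route is the trace identity \eqref{eq:quadratic-ray-trace} with $A=0$. Here I would rather use the critical point $\mu_c$ with $m_q(\mu_c)=\beta$. By \eqref{eq:quadratic-moment-definition}, taking $A=\beta$ gives
\[
\langle\Theta(\beta)\mu_c,\mu_c\rangle=\|\beta\|^2\|\mu_c\|^2 .
\]
If $\beta=0$, then $M_{\mu_c}$ has zero $\mathfrak p_q$-projection. The limit $\mu_c$ lies in the closure of nilpotent quadratic brackets, so it is nilpotent, and $\tr M_{\mu_c}=-\tfrac14\|\mu_c\|^2<0$ by \eqref{eq:moment-identity}. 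Since $J$ is traceless-compatible, we have $\tr(JM J)=\tr M$. It follows that $\tr m_q(\mu_c)=\tfrac{4}{\|\mu_c\|^2}\tr M_{\mu_c}=-1$. Hence $\beta\ne0$, and in fact $\tr\beta=-1$.

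\textbf{Step 2: positivity of $C$.} Fix an eigenvector $X$ of $\beta$, with $\beta X=bX$. I need to show $b>-s$. Following Lauret, use the critical point equation $\mathrm{grad}\|m_q\|^2(\mu_c)=0$. This gives $\Theta(\beta)\mu_c=s\mu_c$, together with the scale-invariant consequence that $\mu_c$ is a nilsoliton-type bracket for $\mathfrak{so}(q)$. Concretely, $\beta$ equals $\tfrac{4}{\|\mu_c\|^2}\pr_{\mathfrak p_q}M_{\mu_c}$, and $\Theta(\beta)\mu_c=s\mu_c$ says that $\beta-sI$ acts on $\mu_c$ as a derivation, up to the $\mathfrak{so}(q)$ correction, since $\Theta(sI)\mu=-s\mu$. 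Thus $D_c:=\beta+sI$ is a derivation of $\mu_c$.

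\textbf{Step 3: the main obstacle.} It remains to show that this derivation is positive definite. The standard Lauret argument shows that a symmetric nilsoliton derivation of a nilpotent bracket has positive eigenvalues: pair the equation $\beta=c_1 M+(\ldots)$ with the projection onto the lowest eigenspace and use that $M$ restricted there is controlled by outgoing brackets. Here $M$ has been projected to $\mathfrak p_q$, and the $J$-twist maps the $b$-eigenspace to the $(-b)$-eigenspace. I therefore expect the main obstacle to be handling the $JM_{\mu_c}J$ term. I would try to bound $\langle M_{\mu_c}X,X\rangle$ and $\langle M_{\mu_c}JX,JX\rangle$ separately on the lowest eigenspace of $\beta$, using nilpotency of $\mu_c$ to make the output term vanish on that space. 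This should give $b+s>0$ for the smallest eigenvalue $b$.
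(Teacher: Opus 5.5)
Your proposal has two genuine gaps.

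\textbf{The argument for $s>0$.} Your ``cleaner route'' is wrong. Every element of $\mathfrak{so}(q)$, hence of $\mathfrak p_q$, is traceless. Your own remark $\tr(JMJ)=\tr M$ then gives $\tr m_q(\mu_c)=\frac{2}{\|\mu_c\|^2}\tr\bigl(M_{\mu_c}-JM_{\mu_c}J\bigr)=0$, not $-1$; the paper uses $\tr\beta=0$, e.g.\ in Corollary~\ref{cor:quadratic-conformal-derivation-trace}. Since $I\notin\mathfrak{so}(q)$, Lauret's $\mathrm{GL}$ trace trick is unavailable here. Your argument also never really uses nilpotency, because $\tr M_\mu=-\frac14\|\mu\|^2$ holds for every bracket. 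Without nilpotency its conclusion is false: for a compact semisimple algebra with $q$ equal to a definite bi-invariant metric, $J=I$ and $\mathfrak p_q=0$, so $\beta=0$ although $\mu\ne0$. Your first route, a $q$-compatible grading element destabilizing $\mu$, is only asserted. You never produce weights that are negated under the $q$-pairing and strictly increased by the bracket. Step~2 is fine: $\Theta(\beta)\mu_c=s\mu_c$ does make $C=\beta+sI_W$ a derivation of $\mu_c$.

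\textbf{Positivity of $C$.} Step~3 is not a proof, and positivity is the heart of the statement. The missing ingredient is a stabilizer identity at the critical point. For $A\in\mathfrak{so}(q)\cap\Der(\mu_c)$ one has $[A,A^*]\in\mathfrak p_q$. The definition of $m_q$ and $\Theta(A)\mu_c=0$ then give
\[
\langle[\beta,A],A\rangle=\langle\beta,[A,A^*]\rangle=\frac{\|\Theta(A^*)\mu_c\|^2}{\|\mu_c\|^2}.
\]
Apply this to $A=\operatorname{ad}_{\mu_c}X$.
\begin{itemize}
\item If $\beta=0$, then $A^*$ is a derivation. So $[A^*,A]=\operatorname{ad}_{\mu_c}(A^*X)$ is a symmetric nilpotent inner derivation, hence zero. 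Thus $A$ is normal and nilpotent, so $A=0$ for every $X$, contradicting $\mu_c\ne0$. This is where nilpotency yields $s>0$.
\item If $X$ is an eigenvector of $C$ with eigenvalue $r$, then $[\beta,A]=[C,A]=rA$ because $C$ is a derivation. Hence $r\ge0$ whenever $A\ne0$, and $r=0$ forces $A=0$ by the same argument.
\item Only central eigenvectors remain. Here your idea of estimating $\langle M_{\mu_c}X,X\rangle$ and $\langle M_{\mu_c}JX,JX\rangle$ separately works cleanly. Invariance of $q$ gives $JX\perp\mu_c(W,W)$, so $\langle M_{\mu_c}X,X\rangle\ge0\ge\langle M_{\mu_c}JX,JX\rangle$. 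The projection formula then gives $\langle\beta X,X\rangle\ge0$, hence $\langle CX,X\rangle\ge s\|X\|^2>0$.
\end{itemize}
On an arbitrary lowest eigenspace of $\beta$ the outgoing-bracket term need not vanish. That is why the non-central eigenvectors must first be disposed of by the stabilizer identity before the $J$-twist can be handled by this sign split.
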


\begin{proof}
In fixed dimension, the Jacobi identity and the vanishing of all iterated brackets of length $\dim W+1$ are polynomial conditions. Invariance of $q$ is a linear condition. Thus nilpotent quadratic Lie brackets form a closed set preserved by positive scaling and the $G$-action. Consequently, the critical bracket $\mu_c$ is again a nonzero nilpotent quadratic Lie bracket. We prove \eqref{eq:quadratic-positive-operator} using this bracket. The operator $\beta$ is the same stratum label, so this proves positivity for the operator associated with the original bracket.

We first record an identity at $\mu_c$. If $A\in\mathfrak{so}(q)\cap\operatorname{Der}(\mu_c)$, then $[A,A^*]\in\mathfrak p_q$. Equations \eqref{eq:quadratic-moment-definition} and $\Theta(A)\mu_c=0$ give
\begin{equation}\label{eq:quadratic-critical-stabilizer}
\langle[\beta,A],A\rangle
=\langle\beta,[A,A^*]\rangle
=\frac{\|\Theta(A^*)\mu_c\|^2}{\|\mu_c\|^2}.
\end{equation}
If $\beta=0$, apply this identity to $A=\operatorname{ad}_{\mu_c}X$. It follows that $A^*$ is also a derivation. Thus $[A^*,A]=\operatorname{ad}_{\mu_c}(A^*X)$ is an inner derivation. It is symmetric and nilpotent, so it vanishes. The operator $A$ is therefore normal and nilpotent, which implies $A=0$. This holds for every $X$, contradicting $\mu_c\ne0$. Hence $s>0$.

The critical point equation is $\Theta(\beta)\mu_c=s\mu_c$ by \cite[Lemma~7.2]{BoLa2020}. Since $\Theta(I_W)\mu_c=-\mu_c$, the symmetric operator $C=\beta+sI_W$ is a derivation of $\mu_c$. Choose an eigenvector $X$ of $C$, with eigenvalue $r$, and set $A=\operatorname{ad}_{\mu_c}X$. Then $[\beta,A]=[C,A]=rA$. If $A\ne0$, equation \eqref{eq:quadratic-critical-stabilizer} gives $r\ge0$. If $r=0$, the preceding argument again gives $A=0$. Thus $r>0$ whenever $X$ is not central.

It remains to treat central eigenvectors. If $X$ is central, $q$-invariance gives $JX\perp\mu_c(W,W)$. The bracket moment formula therefore yields
\begin{equation}\label{eq:quadratic-center-moment}
\begin{aligned}
\langle M_{\mu_c}X,X\rangle
&=\frac14\sum_{i,j}\langle\mu_c(e_i,e_j),X\rangle^2\ge0,\\
\langle M_{\mu_c}JX,JX\rangle
&=-\frac12\sum_i\|\mu_c(JX,e_i)\|^2\le0.
\end{aligned}
\end{equation}
By \eqref{eq:quadratic-moment-projection}, this implies $\langle\beta X,X\rangle\ge0$. Hence $\langle CX,X\rangle\ge s\|X\|^2>0$.
Every eigenvalue of $C$ is positive, as required.
\end{proof}

\begin{corollary}\label{cor:quadratic-conformal-derivation-trace}
Under the assumptions of Proposition~\ref{prop:quadratic-positive-operator}, let $L\in\operatorname{Der}(\mu)$ satisfy $q(LX,Y)+q(X,LY)=2c\,q(X,Y)$. Then
\begin{equation}\label{eq:quadratic-conformal-trace}
\operatorname{tr}(\beta L)=c.
\end{equation}
\end{corollary}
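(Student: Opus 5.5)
The plan is to reduce to the ray identity \eqref{eq:quadratic-ray-trace} of Lemma~\ref{lem:quadratic-git-inequalities}. The conformal condition $q(LX,Y)+q(X,LY)=2c\,q(X,Y)$ reads $L^*J+JL=2cJ$, so $A:=L-cI$ satisfies $A^*J+JA=0$, that is, $A\in\mathfrak{so}(q)$. Next I compute the action of $A$ on $\mu$: since $L\in\operatorname{Der}(\mu)$, we have $\Theta(L)\mu=0$, while \eqref{eq:quadratic-theta-action} gives $\Theta(I_W)\mu=\mu-2\mu=-\mu$. Hence
\[
\Theta(A)\mu=\Theta(L)\mu-c\,\Theta(I_W)\mu=c\mu .
\]
This is the same computation already recorded in \eqref{eq:quadratic-ray} for $L_0$.

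Applying \eqref{eq:quadratic-ray-trace} with $a=c$ gives $\operatorname{tr}(\beta A)=c$. It remains to show that $\operatorname{tr}(\beta I_W)=\operatorname{tr}\beta=0$, so that $\operatorname{tr}(\beta L)=\operatorname{tr}(\beta A)+c\operatorname{tr}\beta=c$. For this I use $\beta\in\mathfrak p_q$, so $\beta J=-J\beta$ and hence $\beta=-J\beta J$. Taking traces and using $J^2=I$ gives $\operatorname{tr}\beta=-\operatorname{tr}\beta$, so $\operatorname{tr}\beta=0$.

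The only point needing care is that the lemma's ray identity requires $A\in\mathfrak{so}(q)$ rather than an arbitrary endomorphism. That is exactly why the scalar part $cI$, which is not in $\mathfrak{so}(q)$ when $c\neq0$, is split off and handled separately by the tracelessness of $\beta$. The hypotheses of Proposition~\ref{prop:quadratic-positive-operator} (nilpotency, $\mu\ne0$) are only needed to ensure $\beta$ is the stratum label with $s>0$. The ray identity itself already covers the case $s=0$, so no further obstacle arises.
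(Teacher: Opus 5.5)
Your proposal is correct and is the paper's proof: set $A=L-cI_W\in\mathfrak{so}(q)$, note $\Theta(A)\mu=c\mu$, apply the ray identity \eqref{eq:quadratic-ray-trace}, and use $\operatorname{tr}\beta=0$. Your derivation of $\operatorname{tr}\beta=0$ from $\beta J=-J\beta$ just spells out a step the paper leaves implicit.
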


\begin{proof}
The operator $A=L-cI_W$ belongs to $\mathfrak{so}(q)$ and satisfies $\Theta(A)\mu=c\mu$. Apply \eqref{eq:quadratic-ray-trace} and use $\operatorname{tr}\beta=0$.
\end{proof}

\begin{remark}\label{rem:quadratic-git-distinction}
Only the auxiliary bracket $\mu_c$ in the closure of the scaled $G$-orbit is assumed to be critical. Positivity transfers to the current tensor because the operator $C$ depends on the common label $\beta$. All moment maps in this argument concern the $\mathrm O(q)$ representation.
\end{remark}

\section{Harmonic torsion without a Killing sign condition}\label{sec:global}
In this section, we prove the global square identity, establish harmonicity and obtain a reductive complement to the nilradical.

We return to the expanding soliton fixed in \eqref{eq:expanding-reduction}. Apply Section~\ref{sec:quadratic-git} to the nilpotent quadratic bracket $\nu_0$ on $W$. If $\nu_0\ne0$, choose its adapted label $\beta$ and set $s=\|\beta\|^2$ and $C=\beta+sI$. If $\nu_0=0$, choose instead
\begin{equation}\label{eq:zero-bracket-choice}
\beta=0,\qquad s=1,\qquad C=I_W.
\end{equation}
Thus $s>0$ and $C>0$ in both cases. The choice \eqref{eq:zero-bracket-choice} uses no GIT stratum. In particular, $s=\|\beta\|^2$ is imposed only in the nonzero case.

In the nonzero case, the label $\beta$ is already adapted to the current tensor $\nu_0$ by Proposition~\ref{prop:quadratic-git-input}. In both cases we retain the original splitting throughout the proof. In the following block matrices, we identify $W$ with its Euclidean representative $\n\oplus\n^*$ as in Remark~\ref{rem:quotient-not-subalgebra}. Thus $\h=\af\oplus W\oplus\af^*$ is an orthogonal decomposition of vector spaces, and we define
\begin{equation}\label{eq:extended-operator}
\Ecal=\operatorname{diag}(0_\af,C,2sI_{\af^*}).
\end{equation}
Since $J$ interchanges $\af$ and $\af^*$ and $\beta J_W+J_W\beta=0$, this operator satisfies $\Ecal+J\Ecal J=2sI$. Also $\tr\beta=0$ and $\dim W=2m$, so $\tr C=2ms$ and $\|C\|^2=\|\beta\|^2+2ms^2$. Consequently,
\begin{equation}\label{eq:extended-operator-traces}
\tr\Ecal=2Ns,\qquad
\|\Ecal\|^2=\|\beta\|^2+2(m+2r)s^2.
\end{equation}
These formulas apply in both branches of the construction, including $W=0$.

Put $V=\ad_U^\nu$, $\mathcal K=L+V$ and $\Fcal=\Sym\mathcal K$. The operator $\mathcal K$ is an ordinary derivation of $\nu$. By \eqref{eq:curvature} and Proposition~\ref{prop:cotangent-derivation},
\begin{equation}\label{eq:corrected-curvature}
\Fcal=cI+P_q\left(\tfrac23M_\nu-\tfrac12B_\nu\right),\qquad
\Fcal+J\Fcal J=2cI.
\end{equation}
The inclusion of $V$ cancels exactly the mean-curvature contribution in \eqref{eq:curvature}.

The argument uses the orthogonality of the moment operator to derivations and the Killing-form cancellation in \cite[Remarks~2.4 and~2.7, Lemma~4.2]{LaLa2014}. We apply these identities to the twisted cotangent bracket and retain the block involving $\eta$ throughout the calculation.

\begin{proposition}\label{prop:global-square}
For every expanding algebraic generalized Ricci soliton on a real metric Lie algebra, the operators above satisfy
\begin{equation}\label{eq:global-square}
0=\left\|\Fcal-\frac cs\Ecal\right\|^2
+\frac{2c}{3s}\tr(M_\nu\Ecal).
\end{equation}
\end{proposition}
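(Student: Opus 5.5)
The plan is to expand the square and evaluate each of its pieces in two independent ways. The first way uses the moment-map expression \eqref{eq:corrected-curvature} for $\Fcal$. The second uses the fact that $\mathcal K=L+V$ is a derivation of $\nu$, together with the block structure of $\Ecal$. Write $X:=\tfrac23M_\nu-\tfrac12B_\nu$, so $\Fcal=cI+P_q(X)$. Since $\Ecal$ is symmetric, expanding gives
\[
\left\|\Fcal-\tfrac cs\Ecal\right\|^2=\|\Fcal\|^2-\tfrac{2c}{s}\tr(\Fcal\Ecal)+\tfrac{c^2}{s^2}\|\Ecal\|^2 .
\]
The identity then reduces to computing $\|\Fcal\|^2$, $\tr(\Fcal\Ecal)$ and $\|\Ecal\|^2$. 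The last of these is already given by \eqref{eq:extended-operator-traces}: $\|\Ecal\|^2=s+2(m+2r)s^2$ in both branches, since $\|\beta\|^2=s$ for $\nu_0\neq0$ and $\beta=0$, $s=1$ otherwise.

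First, $\|\Fcal\|^2=\tr(\Fcal\mathcal K)$. Here $L-cI\in\mathfrak{so}(q)$ and $V\in\mathfrak{so}(q)$, so $\Sym(\mathcal K-cI)$ lies in the range of $P_q$. Moreover $\tr P_q(Y)=0$, because $\tr(JYJ)=\tr Y$. Hence
\[
\tr(P_q(X)\mathcal K)=\tr(X(\mathcal K-cI)) .
\]
Now $\tr(M_\nu\mathcal K)=0$ by \eqref{eq:moment-identity}, and $\tr(B_\nu\mathcal K)=0$ by \eqref{eq:killing-derivation-trace} applied to $\nu$. This leaves $\|\Fcal\|^2=c\tr\mathcal K-c\tr X$. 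We also have $\tr\mathcal K=\tr L=2cN$, since $\nu$ is unimodular so $\tr V=0$.

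Next, compute $\tr(\Fcal\Ecal)$ in the first way. The relation $\Ecal+J\Ecal J=2sI$ gives $P_q(\Ecal)=\Ecal-sI$. Therefore
\[
\tr(\Fcal\Ecal)=2Ncs+\tr(X\Ecal)-s\tr X .
\]
The Killing contribution vanishes. Indeed, by \eqref{eq:double-killing} $B_\nu$ lives on the $\g\g$ block, where it equals $2B$. Since $B\n=0$ (Lemma~\ref{lem:general-nilradical}), $B$ is supported on $\af$, and there $\Ecal$ is $0_\af$. This is exactly why the $\af$ block of $\Ecal$ was chosen to be zero. So $\tr(\Fcal\Ecal)=2Ncs+\tfrac23\tr(M_\nu\Ecal)-s\tr X$.

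Then compute $\tr(\Fcal\Ecal)=\tr(\mathcal K\Ecal)$ in the second way, block by block.
\begin{itemize}
\item On $W$, $\mathcal K$ induces $L_0+\sum_i u_iT_i$, where $U=\sum_i u_ia_i$. Corollary~\ref{cor:quadratic-conformal-derivation-trace} gives $\tr(\beta L_0)=c$, and Lemma~\ref{lem:quadratic-git-inequalities} gives $\tr(\beta T_i)=0$. Also $\tr L_0=2cm$ and $\tr T_i=0$. Hence $\tr(C\mathcal K_W)=c+2cms$; in the zero-bracket branch the same value holds, with $s=1$ and $\beta=0$.
\item On $\af^*$, the block of $L$ is $2cI-D$. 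Its trace is $2cr$, because $D$ has trace zero on $\af$ by Lemma~\ref{lem:general-nilradical}. The block of $V$ there is $-(\ad_U)^*$ compressed to $\af$. Its trace is minus the trace of the derivation induced on $\g/\n$, which is zero by the same lemma.
\end{itemize}
Together these give $\tr(\mathcal K\Ecal)=c+2cs(m+2r)=\tfrac cs\|\Ecal\|^2$.

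Finally, combine the pieces. Using $\tr(\Fcal\Ecal)=\tfrac cs\|\Ecal\|^2$, the square equals $\|\Fcal\|^2-\tfrac cs\tr(\Fcal\Ecal)$. Substituting the first-way expressions for both terms, the quantities $2Nc^2$ and $c\tr X$ cancel, leaving exactly $-\tfrac{2c}{3s}\tr(M_\nu\Ecal)$, which is the claim.

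The main obstacle is the second evaluation of $\tr(\mathcal K\Ecal)$. One must check carefully that, in the orthogonal splitting $\af\oplus W\oplus\af^*$, the diagonal $W$-block of $\mathcal K$ really is the induced quotient operator, despite Remark~\ref{rem:quotient-not-subalgebra}. This holds because $\Ecal$ is block diagonal, so only diagonal blocks of $\mathcal K$ enter. One must also check that the GIT trace identities apply to the non-symmetric $L_0$, including its lower-left $K_{\eta|_\n}$ block.
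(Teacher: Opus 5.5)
Your argument is correct and uses the same ingredients as the paper's proof. These are the block-diagonal pairing $\langle\Fcal,\Ecal\rangle=\frac cs\|\Ecal\|^2$ obtained from the GIT trace identities, the orthogonality $\tr(M_\nu\mathcal K)=0$, the Killing cancellation and the self-adjointness of $P_q$; the difference is only bookkeeping, since the paper pairs \eqref{eq:corrected-curvature} with $Z=\Fcal-\frac cs\Ecal$ (symmetric and in $\mathfrak{so}(q)$, so $P_q(Z)=Z$ and $\tr Z=0$) instead of expanding the square and evaluating $\|\Fcal\|^2$ and $\langle\Fcal,\Ecal\rangle$ separately.

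There is one slip in the zero-bracket branch, where $\|\beta\|^2=0\neq s=1$. The true values there are $\|\Ecal\|^2=2(m+2r)$ and $\tr(C\mathcal K_W)=2cm$ (because $\tr(\beta L_0)=0$), not $s+2(m+2r)s^2$ and $c+2cms$ as you state. Both errors shift the two sides of $\tr(\mathcal K\Ecal)=\frac cs\|\Ecal\|^2$ by the same $c$, so that identity, which is all you actually use, still holds. This is why the paper writes the uniform normalization $\tr(\beta L_0)=\frac cs\|\beta\|^2$.
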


\begin{proof}
We first compute the pairing of $\Fcal$ and $\Ecal$. By \eqref{eq:quotient-ray}, Lemma~\ref{lem:quadratic-git-inequalities} and Corollary~\ref{cor:quadratic-conformal-derivation-trace}, the identities
\begin{equation}\label{eq:unified-label-trace}
\tr(\beta T_i)=0,\qquad
\tr(\beta L_0)=\frac cs\|\beta\|^2
\end{equation}
hold when $\nu_0\ne0$. In that case the second right-hand side is $c$. When $\nu_0=0$, we have $\beta=0$ by \eqref{eq:zero-bracket-choice}, so both identities in \eqref{eq:unified-label-trace} remain valid with the displayed normalization.

The derivation $D+\ad_U$ preserves the characteristic ideal $\n$. Let $Q=(D+\ad_U)_{\af\af}$ be its induced operator on $\g/\n$ under the orthogonal identification. Lemma~\ref{lem:general-nilradical} gives $\tr Q=0$ without a solvability assumption. The diagonal blocks of $\mathcal K$ on $\af$, $W$ and $\af^*$ are $Q$, $L_0+T_U$ and $2cI-Q^*$, respectively, where $T_U=\sum_i\langle U,a_i\rangle T_i$. Here $U\in\af$, $\tr L_0=2cm$ and $\tr T_U=0$. Equation~\eqref{eq:unified-label-trace} also gives $\tr(\beta T_U)=0$. Pairing these blocks with \eqref{eq:extended-operator}, we obtain
\begin{align}
\langle\Fcal,\Ecal\rangle
&=\tr\bigl((L_0+T_U)(\beta+sI)\bigr)+4crs\notag\\
&=\frac cs\|\beta\|^2+2c(m+2r)s
=\frac cs\|\Ecal\|^2.\label{eq:extended-operator-pairing}
\end{align}
No off-diagonal block has been set equal to zero in this calculation.

Set $Z=\Fcal-(c/s)\Ecal$. By \eqref{eq:corrected-curvature} and \eqref{eq:extended-operator}, the operator $Z$ is symmetric and belongs to $\mathfrak{so}(q)$. Hence $\tr Z=0$ and $P_q(Z)=Z$. Lemma~\ref{lem:general-nilradical} and \eqref{eq:double-killing} show that $B_\nu$ vanishes on $W\oplus\af^*$ and maps $\af$ into itself, so $\tr(B_\nu\Ecal)=0$. In the original splitting $\g\oplus\g^*$, the top-left block of $\Fcal$ is $\Sym(D+\ad_U)$. Since $D+\ad_U$ is a derivation, \eqref{eq:killing-derivation-trace} gives
\begin{equation}\label{eq:global-killing-cancellation}
\tr(B_\nu\Fcal)=2\tr\bigl(B(D+\ad_U)\bigr)=0,
\qquad \tr(B_\nu Z)=0.
\end{equation}
Thus the Killing term vanishes. Finally, $\mathcal K$ is a derivation and $M_\nu$ is symmetric. Equation \eqref{eq:moment-identity}, or \cite[Remark~2.4]{LaLa2014}, therefore gives
\begin{equation}\label{eq:moment-derivation-orthogonality}
\tr(M_\nu\Fcal)=\tr(M_\nu\mathcal K)=0.
\end{equation}
The projection $P_q$ is self-adjoint for the Hilbert--Schmidt inner product, and $P_q(Z)=Z$. Pairing \eqref{eq:corrected-curvature} with $Z$ and using $\tr Z=0$, \eqref{eq:global-killing-cancellation} and \eqref{eq:moment-derivation-orthogonality}, we obtain
\begin{equation}\label{eq:global-square-pairing}
\langle\Fcal,Z\rangle
=\tfrac23\tr(M_\nu Z)
=-\frac{2c}{3s}\tr(M_\nu\Ecal).
\end{equation}
Equation \eqref{eq:extended-operator-pairing} gives $\langle Z,\Ecal\rangle=0$, so the left-hand side of \eqref{eq:global-square-pairing} equals $\|Z\|^2$. This proves \eqref{eq:global-square}, including the zero-bracket case.
\end{proof}

We next prove that the last term of \eqref{eq:global-square} is nonnegative. The classical bracket decomposition in \cite[Lemma~2.3]{LaLa2014} separates the nilradical bracket, the action of its complement and the remaining bracket components. Here we use the Cartan three-form $\tau(X,Y,Z)=q(\nu(X,Y),Z)$ to include the torsion terms. Since $J$ is an isometry, $|\tau|^2=\|\nu\|^2$. The possible orthogonal components have the following types of arguments,
\begin{equation}\label{eq:five-cartan-types}
W^3,\qquad \af W^2,\qquad \af^2W,\qquad \af^3,\qquad \af^2\af^*.
\end{equation}
where, for example, $\af W^2$ means one argument in $\af$ and two in $W$, together with all permutations. A component with two $\af^*$ arguments vanishes by the cotangent bracket. If one $\af^*$ argument and one $W$ argument occur, the base bracket lies in $\n$ and is annihilated by $\af^*$. The last component in \eqref{eq:five-cartan-types} is $\tau(a_i,a_j,\xi)=\xi(\pr_\af[a_i,a_j])$ for $\xi\in\af^*$. It records the bracket of the reductive quotient $\g/\n$.

Define vectors in $W$ by
\begin{equation}\label{eq:horizontal-bracket-vectors}
v_{ij}=\pr_\n[a_i,a_j]+H(a_i,a_j,\cdot)|_\n\in\n\oplus\n^*.
\end{equation}
Set $H_\af=H|_{\Lambda^3\af}$. The first four Cartan components are determined by $\nu_0$, $T_i$, $v_{ij}$ and $H_\af$, respectively. Explicitly, $\tau|_{W^3}=q(\nu_0(\cdot,\cdot),\cdot)$, $\tau(a_i,w,z)=\langle J_WT_iw,z\rangle$, and $\tau(a_i,a_j,w)=\langle J_Wv_{ij},w\rangle$.

\begin{proposition}\label{prop:cartan-decomposition}
With the notation above,
\begin{equation}\label{eq:cartan-decomposition}
\begin{split}
4\tr(M_\nu\Ecal)={}&\langle(\Theta(\beta)-sI)\nu_0,\nu_0\rangle
+3\sum_i\langle[\beta,T_i],T_i\rangle\\
&+3\sum_{i,j}\langle Cv_{ij},v_{ij}\rangle
+2s|H_\af|^2.
\end{split}
\end{equation}
The sum over $i,j$ is ordered. Every term on the right-hand side is nonnegative. If $\tr(M_\nu\Ecal)=0$, then $v_{ij}=0$ for every $i,j$ and $H|_{\Lambda^3\af}=0$.
\end{proposition}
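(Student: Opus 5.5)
The plan is to write $4\tr(M_\nu\Ecal)$ through the moment identity \eqref{eq:moment-identity} as $\langle\Theta(\Ecal)\nu,\nu\rangle$. Since $q$ identifies $\nu$ with the Cartan three-form $\tau$ and $\Ecal+J\Ecal J=2sI$, the covariant action of $\Ecal$ on $\tau$ should be computed rather than the action on $\nu$. Precisely, $\Theta(A)\nu$ corresponds under $q$ to $\bigl(\rho(A)\tau\bigr)+\tau(\cdot,\cdot,(A+JA^*J)\,\cdot)$. For symmetric $\Ecal$ this gives
\[
\langle\Theta(\Ecal)\nu,\nu\rangle=\Bigl\langle\sum_{k}\tau(\ldots,\Ecal'_k\cdot,\ldots),\tau\Bigr\rangle
\]
with a weight operator $\Ecal'$ acting diagonally on slots. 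Concretely, I expect $\langle\Theta(\Ecal)\nu,\nu\rangle=-\sum_{\text{slots}}\langle \tau(\ldots,\Ecal\cdot,\ldots),\tau\rangle+2s|\tau|^2$. Here $2s|\tau|^2$ comes from $J\Ecal J=2sI-\Ecal$ in the output slot, converting $-\Ecal$ on two inputs and $+\Ecal$ on the output into $-\Ecal$ on three slots plus $2s$. The first step is to fix this formula and its sign carefully, since $\Ecal$ is not in $\mathfrak{so}(q)$.

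Next, the pairing splits along the five orthogonal types \eqref{eq:five-cartan-types}, because $\Ecal$ is block diagonal for $\h=\af\oplus W\oplus\af^*$ and preserves each type. Each type is then evaluated separately.

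\emph{Type $W^3$.} The eigenvalues of $C=\beta+sI$ give weight $2s-\sum(\text{three }C\text{-weights})$ on $\nu_0$. Rewriting in terms of $\beta$ via $J_W\beta J_W=-\beta$ yields $\langle(\Theta(\beta)-sI)\nu_0,\nu_0\rangle$; the $-3s+2s=-s$ bookkeeping produces the $-sI$.

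\emph{Type $\af W^2$.} The $\af$ slot has weight $0$, so the three ordered positions and the two $W$ slots give $3\bigl(2s\|T_i\|^2-2\langle CT_i,T_i\rangle_{\text{sym}}\bigr)$. Using $T_i\in\mathfrak{so}(q_W)$ and $J_WCJ_W=2s-C$, I expect this to reduce to $3\langle[\beta,T_i],T_i\rangle$.

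\emph{Type $\af^2W$.} One slot weighted by $C$ together with the $2s$ term yields, after the same $J$-manipulation, $3\langle Cv_{ij},v_{ij}\rangle$ summed over ordered pairs.

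\emph{Type $\af^3$.} All weights are zero, so only $2s|H_\af|^2$ remains.

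\emph{Type $\af^2\af^*$.} The $\af^*$ slot carries weight $2s$ and the total is $2s-2s=0$; this matches the statement that the reductive quotient bracket contributes zero.

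Nonnegativity then follows from earlier results. The first two terms are nonnegative by Lemma~\ref{lem:quadratic-git-inequalities}, using $T_i\in\Der(\nu_0)\cap\mathfrak{so}(q_W)$ from \eqref{eq:quotient-ray}. The third is nonnegative by $C>0$ from Proposition~\ref{prop:quadratic-positive-operator}, or trivially when $C=I$ in the zero-bracket case \eqref{eq:zero-bracket-choice}, where the first two terms vanish anyway. The last term is nonnegative since $s>0$.

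If the total vanishes, each term vanishes, and positivity of $C$ and of $s$ gives $v_{ij}=0$ and $H_\af=0$.

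The main obstacle I expect is the exact bookkeeping of the full-contraction constants, namely the factors $3$ arising from the permutations of one distinguished slot among three. The other delicate point is reducing $2s\|T\|^2-2\langle CT,T\rangle$-type expressions to $\langle[\beta,T],T\rangle$ via $J_W\beta=-\beta J_W$ and $T^*J_W=-J_WT$. I would verify that reduction first on the $\af W^2$ block.
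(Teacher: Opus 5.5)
Your proposal is correct and follows the paper's proof essentially step for step: the identity $q((\Theta(\Ecal)\nu)(X,Y),Z)=2s\tau(X,Y,Z)+(\rho(\Ecal)\tau)(X,Y,Z)$ obtained from $\Ecal+J\Ecal J=2sI$, the separate evaluation of the five Cartan types, and the same nonnegativity and equality arguments, including the zero-bracket branch. The one place to be careful is the $\af W^2$ block: the two-form $\tau(a_i,\cdot,\cdot)$ is represented by $J_WT_i$, so its contribution is $2s\|T_i\|^2-2\tr(CT_i^*T_i)$, equivalently $2sJ_WT_i-CJ_WT_i-J_WT_iC=J_W[\beta,T_i]$; if you instead read your intermediate term $\langle CT_i,T_i\rangle$ as $\tr(CT_iT_i^*)$, you would get $-\langle[\beta,T_i],T_i\rangle$, i.e.\ the opposite sign.
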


\begin{proof}
The relation $\Ecal^*J+J\Ecal=2sJ$ gives
\begin{equation}\label{eq:cartan-conformal-action}
q((\Theta(\Ecal)\nu)(X,Y),Z)
=2s\tau(X,Y,Z)+(\rho(\Ecal)\tau)(X,Y,Z).
\end{equation}
We pair \eqref{eq:cartan-conformal-action} with $\tau$ and use \eqref{eq:moment-identity}. The operator $\Ecal$ preserves $\af$, $W$ and $\af^*$, so its insertion action preserves the five orthogonal components. They can therefore be evaluated independently.

On $W^3$, the scalar part of $C=\beta+sI$ contributes $-3s$ through insertion, and the conformal term contributes $2s$. Since $\beta$ is $q$-skew, the remaining tensor corresponds to $(\Theta(\beta)-sI)\nu_0$. This gives the first term of \eqref{eq:cartan-decomposition}.

On $\af W^2$, the two-form $\tau(a_i,\cdot,\cdot)$ is represented by the skew-adjoint operator $J_WT_i$. The conformal and insertion terms give $2sJ_WT_i-CJ_WT_i-J_WT_iC=J_W[\beta,T_i]$, since $\beta J_W=-J_W\beta$. Multiplication by $J_W$ preserves the Hilbert--Schmidt inner product. There are three positions for the $\af$ argument, so the full contraction is $3\sum_i\langle[\beta,T_i],T_i\rangle$.

On $\af^2W$, the one-form action is $2sI-C=sI-\beta$. The relation $(sI-\beta)J_W=J_WC$ converts the pairing to $\langle Cv_{ij},v_{ij}\rangle$. There are three positions for the $W$ slot, and the ordered sum already counts both orders of the horizontal slots. This gives the third term of \eqref{eq:cartan-decomposition}.

On $\af^3$, all insertion terms vanish because $\Ecal|_\af=0$. The remaining contribution is $2s|H_\af|^2$.

On $\af^2\af^*$, the insertion weights are $0+0+2s$. Thus the insertion contributes $-2s$, which cancels the $2s$ conformal term in \eqref{eq:cartan-conformal-action}. This component contributes zero. These five calculations prove \eqref{eq:cartan-decomposition}.

If $\nu_0\ne0$, the first two terms are nonnegative by Lemma~\ref{lem:quadratic-git-inequalities}, and $C>0$ by Proposition~\ref{prop:quadratic-positive-operator}. If $\nu_0=0$, the first two terms vanish and the last two are nonnegative by \eqref{eq:zero-bracket-choice}. Strict positivity of $C$ and $s$ gives the asserted equality consequences.
\end{proof}

When $\nu_0=0$, the identities needed for Proposition~\ref{prop:global-square} reduce to
\begin{equation}\label{eq:zero-quotient-traces}
\|\Ecal\|^2=2m+4r,\qquad
\langle\Fcal,\Ecal\rangle=c(2m+4r),\qquad
\tr(\beta L_0)=0.
\end{equation}
Thus \eqref{eq:extended-operator-pairing} holds with $s=1$, although the nonzero-stratum formula $\tr(\beta L_0)=c$ does not. The Cartan identity \eqref{eq:cartan-decomposition} becomes
\begin{equation}\label{eq:zero-quotient-cartan}
4\tr(M_\nu\Ecal)=3\sum_{i,j}\|v_{ij}\|^2+2|H_{\af}|^2,
\qquad H_\af=H|_{\Lambda^3\af}.
\end{equation}
Both terms in \eqref{eq:zero-quotient-cartan} are nonnegative. These identities include $W=0$ and do not require the quotient $\g/\n$ to be abelian, since its Cartan component has zero contribution. This verifies the complete equality argument in the zero-quotient case directly.

\begin{proof}[Proof of Theorem~\ref{thm:harmonicity}]
Lemma~\ref{lem:invariant-scalar} proves the steady assertion. For $\lambda<0$, Propositions~\ref{prop:global-square} and \ref{prop:cartan-decomposition} imply
\begin{equation}\label{eq:global-equality}
\Fcal=\frac cs\Ecal,\qquad v_{ij}=0,\qquad H|_{\Lambda^3\af}=0.
\end{equation}
The two summands in \eqref{eq:horizontal-bracket-vectors} lie in the independent subspaces $\n$ and $\n^*$. Thus \eqref{eq:global-equality} gives
\begin{equation}\label{eq:general-horizontal-vanishing}
\pr_\n[\af,\af]=0,\qquad H(\af,\af,\g)=0.
\end{equation}
In the original splitting $\g\oplus\g^*$, we have
\begin{equation}\label{eq:corrected-derivation-blocks}
\mathcal K=\begin{pmatrix}
D+\ad_U&0\\
-K_\eta+K_{\iota_UH}&2cI-D-(\ad_U)^*
\end{pmatrix}.
\end{equation}
For $A\in\af$, equations \eqref{eq:extended-operator} and \eqref{eq:global-equality} give $\Fcal A=0$. The lower-left block of $\Fcal=\Sym\mathcal K$ in \eqref{eq:corrected-derivation-blocks} is $(-K_\eta+K_{\iota_UH})/2$. Its value on $A$ vanishes, which yields
\begin{equation}\label{eq:horizontal-defect-vanishing}
\eta(A,Y)=H(U,A,Y)=0\qquad(A\in\af,\ Y\in\g).
\end{equation}
The second equality uses $U\in\af$ and \eqref{eq:general-horizontal-vanishing}. Hence $\eta\in\Lambda^2\n^*$ and $K_\eta$ vanishes on $\af$. Since $B$ is self-adjoint and $B\n=0$, its image lies in $\af$. Thus $K_\eta B=0$ and $K_\eta^*K_\eta B=0$. The harmonicity criterion \cite[Theorem~3.3]{ChChZh2026b}, recalled in \eqref{eq:killing-criterion}, proves $\eta=0$.

Proposition~\ref{prop:nonunimodular-expanding} now gives harmonicity for every non-unimodular soliton. If $\g$ is solvable and nonnilpotent, Lemma~\ref{lem:horizontal} gives $\lambda\le0$, so the assertion just proved applies. Harmonicity in the nilpotent case is \cite[Theorem~1.1]{ChChZh2026a}. It also follows here from $B=0$ and \eqref{eq:killing-criterion}, for every $\lambda$. This includes the abelian and zero-dimensional cases.
\end{proof}

For $H=0$, the reductive complement in the next corollary is established in \cite[Proposition~4.1 and Theorem~4.6(i)]{LaLa2014}. Its solvable specialization, $[\af,\af]=0$, is the standardness assertion of \cite[Theorem~4.8(ii)]{La2011}. We obtain both conclusions below together with the torsion vanishing condition.

\begin{corollary}\label{cor:expanding-reductive-complement}
For an expanding algebraic generalized Ricci soliton on any real metric Lie algebra, the orthogonal complement $\af=\n^\perp$ is a reductive subalgebra and $H(\af,\af,\g)=0$. If $\g$ is solvable, then
\begin{equation}\label{eq:standard}
[\af,\af]=0,\qquad H(\af,\af,\g)=0.
\end{equation}
\end{corollary}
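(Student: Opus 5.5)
We take as input the equality consequences \eqref{eq:global-equality} and \eqref{eq:general-horizontal-vanishing} from the proof of Theorem~\ref{thm:harmonicity}. The torsion assertion $H(\af,\af,\g)=0$ is exactly the second identity in \eqref{eq:general-horizontal-vanishing}, so nothing further is needed for it. For the subalgebra property, the first identity gives $\pr_\n[\af,\af]=0$. Since $\af=\n^\perp$ and $\g=\af\oplus\n$ orthogonally, this says $[\af,\af]\subset\af$, so $\af$ is a subalgebra.

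For reductivity, we use Lemma~\ref{lem:general-nilradical}: the quotient $\g/\n$ is reductive. The restriction of the projection $\g\to\g/\n$ to $\af$ is a linear isomorphism, since $\af$ is a vector-space complement of $\n$. Once $\af$ is a subalgebra, this map is a Lie algebra homomorphism. Indeed, for $A,B\in\af$ the class of $[A,B]$ in $\g/\n$ is the quotient bracket of their classes. Thus $\af\simeq\g/\n$ as Lie algebras, and $\af$ is reductive. Note that the $\af^2\af^*$ Cartan component, which carries $\pr_\af[\af,\af]$, contributed zero in Proposition~\ref{prop:cartan-decomposition}. This is consistent with $[\af,\af]$ being allowed to be nonzero in the general case.

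For the solvable case, Lemma~\ref{lem:solvable-preliminaries} gives $[\g,\g]\subset\n$. Hence $[\af,\af]\subset\af\cap\n=0$. Equivalently, $\af\simeq\g/\n$ is abelian, being a reductive quotient of a solvable algebra; by Lemma~\ref{lem:general-nilradical}, the quotient is the center $\mathfrak r/\n$ with $\mathfrak r=\g$. The torsion identity is inherited from the general statement.

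There is no substantial obstacle here; the work was done in Propositions~\ref{prop:global-square} and \ref{prop:cartan-decomposition}. The only point requiring care is to check that the equality argument in the proof of Theorem~\ref{thm:harmonicity} also covers the zero-quotient branch \eqref{eq:zero-bracket-choice}, including $W=0$. There \eqref{eq:zero-quotient-cartan} still forces $v_{ij}=0$ and $H_\af=0$, and when $m=0$ the statement is trivial because $\af=\g$ is then reductive.
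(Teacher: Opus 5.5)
Your proposal is correct and is essentially the paper's proof: $\pr_\n[\af,\af]=0$ from \eqref{eq:general-horizontal-vanishing} makes $\af$ a subalgebra isomorphic to the reductive quotient $\g/\n$ of Lemma~\ref{lem:general-nilradical}, $H(\af,\af,\g)=0$ is read off from the same display, and in the solvable case the quotient is abelian (your $[\af,\af]\subset\af\cap\n=0$ is the same step). One small precision: when $m=0$ only the reductivity is automatic, while the torsion claim there says $H=0$ and still relies on \eqref{eq:zero-quotient-cartan} with $W=0$, as your preceding clause notes.
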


\begin{proof}
Equation~\eqref{eq:general-horizontal-vanishing} shows that $\af$ is a subalgebra. It maps isomorphically onto the reductive quotient $\g/\n$ in Lemma~\ref{lem:general-nilradical}. For a solvable algebra this quotient is abelian, giving \eqref{eq:standard}.
\end{proof}

The positive coefficient in \eqref{eq:global-square} uses $\lambda<0$. Its sign reverses for shrinking solitons, so the equality argument gives no general shrinking conclusion. Proposition~\ref{prop:bi-invariant-harmonicity} proves harmonicity for every soliton constant under the compact semisimple and bi-invariant hypotheses.

\section{Solvable structure and normal extensions}\label{sec:structure}\label{sec:extensions}
In this section, we prove the solvable structure theorem and its converse, then apply them to torsion classes and explicit extensions.

In the remaining sections, $\g$ is solvable unless a statement specifies otherwise. We prove the structural assertion of Theorem~\ref{thm:structure} by using the adjoint representation of the twisted cotangent Lie algebra. We apply the commutator argument of \cite[Lemma~4.7]{La2011} to this algebra. The following identity retains $\eta=\delta H$ and therefore does not require harmonicity.

\begin{lemma}\label{lem:courant-normality}
Suppose that \eqref{eq:soliton} holds with $\lambda=-c<0$ and that
\begin{equation}\label{eq:normality-hypotheses}
 [\af,\af]=0,\qquad H(\af,\af,\g)=0.
\end{equation}
For $A\in\af$, set $T=\ad_A^\nu$. Then
\begin{equation}\label{eq:courant-normality-square}
 0=\frac16\|\Theta(T^*)\nu\|^2
   +2|\iota_A\eta|^2.
\end{equation}
Consequently, $T^*\in\Der(\h)$ and $[T,T^*]=0$.
\end{lemma}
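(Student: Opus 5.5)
The plan is to adapt the commutator argument of \cite[Lemma~4.7]{La2011} to the twisted cotangent algebra $(\h,\nu)$. I will evaluate the single trace $\tr(\Fcal[T,T^*])$ in two ways: once through the curvature formula \eqref{eq:corrected-curvature}, which yields the moment term $\|\Theta(T^*)\nu\|^2$, and once through the derivation $\mathcal K=L+\ad_U^\nu$, which yields the $\eta$ term. Equating the two expressions gives \eqref{eq:courant-normality-square}.

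\emph{First evaluation.} Since $\mathfrak{so}(q)$ is closed under transpose and $T=\ad_A^\nu\in\mathfrak{so}(q)$, the commutator $[T,T^*]$ is symmetric and lies in $\mathfrak{so}(q)$. Hence $P_q[T,T^*]=[T,T^*]$ and $\tr[T,T^*]=0$. Pairing \eqref{eq:corrected-curvature} with $[T,T^*]$ therefore gives
\[
\tr(\Fcal[T,T^*])=\tfrac23\tr(M_\nu[T,T^*])-\tfrac12\tr(B_\nu[T,T^*]).
\]
By \eqref{eq:moment-identity} and $\Theta(A)^*=\Theta(A^*)$,
\[
4\tr(M_\nu[T,T^*])=\|\Theta(T^*)\nu\|^2-\|\Theta(T)\nu\|^2=\|\Theta(T^*)\nu\|^2,
\]
because $T$ is an inner derivation. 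The Killing term must then be removed. By \eqref{eq:double-killing}, it equals $\tr(B[\ad_A,\ad_A^*])$ up to a factor. Since $\g$ is solvable, Lemma~\ref{lem:solvable-preliminaries} gives $B\n=0$ and $\operatorname{im}B\subset\af$. The operator $\ad_A$ maps $\g$ into $\n$, and $\ad_A$ annihilates $\af$ because $[\af,\af]=0$. Together these should force $\tr(B\,\ad_A\ad_A^*)=0$ and $\tr(B\,\ad_A^*\ad_A)=0$. I will check these two vanishings block by block in the splitting $\af\oplus\n$.

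\emph{Second evaluation.} Because $[T,T^*]$ is symmetric, $\tr(\Fcal[T,T^*])=\tr(\mathcal K[T,T^*])=\tr([\mathcal K,T]T^*)$. Since $\mathcal K$ is a derivation, $[\mathcal K,T]=\ad^\nu_{\mathcal KA}$. From the block form \eqref{eq:corrected-derivation-blocks} and the hypotheses \eqref{eq:normality-hypotheses}, several terms of $\mathcal KA$ vanish:
\begin{itemize}[label={}]
\item $DA=0$, by Lemma~\ref{lem:solvable-preliminaries};
\item $[U,A]=0$, since $U\in\af$ and $\af$ is abelian;
\item $H(U,A,\cdot)=0$, by the hypothesis $H(\af,\af,\g)=0$.
\end{itemize}
What remains is the covector $\mathcal KA=-K_\eta A\in\g^*$, which is $\pm\iota_A\eta$ up to the musical identification. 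Thus
\[
\tr(\Fcal[T,T^*])=\langle\ad^\nu_{\xi},\ad^\nu_A\rangle,\qquad \xi=-K_\eta A.
\]
For a covector $\xi$, the adjoint $\ad^\nu_\xi$ has only a lower-left block, namely the coadjoint term $\zeta\mapsto$ evaluation against $\ad$. The adjoint $\ad_A^\nu$ has the lower-left block $K_{\iota_AH}$ together with diagonal blocks, so only the lower-left blocks pair. This pairing should express $\langle\ad^\nu_\xi,\ad^\nu_A\rangle$ as a negative multiple of $|\iota_A\eta|^2$, possibly plus a cross term of the form $\langle\xi,\cdot\rangle$ built from $H(A,\cdot,\cdot)$ and $\eta$. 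If such a cross term appears, I intend to remove it with the torsion equation $d\eta=-2cH-\rho(D)H$ contracted with $A$, together with $dA^\flat=0$ from Lemma~\ref{lem:solvable-preliminaries}.

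\emph{Conclusion and main obstacle.} Equating the two evaluations gives $\frac16\|\Theta(T^*)\nu\|^2+(\text{positive})\,|\iota_A\eta|^2=0$, which is \eqref{eq:courant-normality-square} once the constants are fixed. Both terms then vanish, so $\Theta(T^*)\nu=0$, that is, $T^*\in\Der(\h)$. Consequently $\tr(M_\nu[T,T^*])=0$, and I will deduce $[T,T^*]=0$ by showing that $\|[T,T^*]\|^2=\tr([T,T^*]T T^*)-\tr([T,T^*]T^*T)$ vanishes: since both $T$ and $T^*$ are derivations, $[T,T^*]$ is a symmetric derivation, and one can repeat Lauret's argument that it is orthogonal to $T$. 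I expect the main obstacle to be the second evaluation, specifically getting the exact coefficient $2$ on $|\iota_A\eta|^2$ and showing that no cross term with $H$ survives. This requires the norm conventions \eqref{eq:norm-convention} and the normalization of $q$, which is twice the standard pairing. The Killing cancellation is a secondary concern; if the block argument for it fails, I would instead use the equality $\Fcal=(c/s)\Ecal$ from \eqref{eq:global-equality} to compute the first side directly.
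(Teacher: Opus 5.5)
\textbf{There is a genuine gap.} Your architecture matches the paper's: pair the corrected curvature with $Q=[T,T^*]$, then use the moment identity on one side and $[\mathcal K,T]=\ad^\nu_{\mathcal KA}$ on the other. Working with $\Fcal=\Sym\mathcal K$ instead of $P=\Sym L-cI$ is equivalent bookkeeping. It absorbs the term $\tr(\Sym(\ad_U^\nu)Q)$, which the paper kills separately via $[\ad_U^\nu,T]=\ad^\nu_{[U,A]_\nu}=0$. Your first evaluation is correct, and so is $\mathcal KA=-\zeta$ with $\zeta=\iota_A\eta$.

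The gap is the step you flag as the main obstacle, and it is not a matter of fixing constants. Put $\alpha=\iota_AH$. Since $[-\zeta,Y]_\nu(Z)=-\zeta([Y,Z])=d\zeta(Y,Z)$, the only block of $\ad^\nu_{-\zeta}$ is the lower-left block $K_{d\zeta}$. The lower-left block of $T$ is $K_\alpha$. Hence
\[
\tr(\Fcal Q)=\tr\bigl(\ad^\nu_{-\zeta}T^*\bigr)=\tr(K_{d\zeta}K_\alpha^*)=\langle d\zeta,\alpha\rangle=2\langle\zeta,d^\dagger\alpha\rangle_\wedge .
\]
No multiple of $|\zeta|^2$ appears at this stage. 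The whole expression is the cross term between $\eta$ and $\iota_AH$, and its sign is exactly what the conclusion needs.

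The missing idea is the identity $d^\dagger(\iota_AH)=-\iota_A\eta$.
\begin{itemize}
\item Since $A\perp[\g,\g]$, the form $A^\flat$ is closed.
\item Taking exterior adjoints of $d(A^\flat\wedge\cdot)+(A^\flat\wedge\cdot)d=0$ gives $d^\dagger\iota_A=-\iota_Ad^\dagger$.
\item With $\delta=d^\dagger+\iota_U$ and $H(U,A,\cdot)=0$, this gives $d^\dagger\alpha=-\iota_A(\eta-\iota_UH)=-\zeta$.
\end{itemize}
So $\tr(\Fcal Q)=-2|\zeta|^2$, which is \eqref{eq:courant-normality-square} with the coefficient $2$.

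Your proposed remedy, contracting the torsion equation with $A$, does not do this. That equation is already used up in the derivation property of $\mathcal K$. Applying it again gives $d\zeta=\rho(\ad_A)\eta+2c\,\alpha+\rho(D)\alpha$, which brings back $\alpha$-terms instead of producing $-|\zeta|^2$. The closedness of $A^\flat$ that you list is the right ingredient, but it must be applied to the codifferential acting on $H$.

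Two minor corrections:
\begin{itemize}
\item The upper-left block of $Q$ is $[\ad_A,\ad_A^*]-K_\alpha^*K_\alpha$, not just $[\ad_A,\ad_A^*]$. Still $\tr(B_\nu Q)=0$, because $Q\af=0$ and $B_\nu$ has image in $\af$. Here $Q\af=0$ follows from $T\af=0$ and $T\h\subset\n\oplus\n^*\perp\af$.
\item The last step needs no Lauret-type argument. For the same reason $T^*A=0$, so $[T^*,T]=\ad^\nu_{T^*A}=0$ once $T^*$ is a derivation.
\end{itemize}
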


\begin{proof}
Fix $A\in\af$ and write
\begin{equation}\label{eq:normality-operators}
 Q=[T,T^*],\qquad \alpha=\iota_AH,\qquad
 \zeta=\iota_A\eta.
\end{equation}
The invariance of $q$ implies that $T$ and $T^*$ are $q$-skew. Hence $Q$ is both symmetric and $q$-skew. Since $T$ is a derivation, $\Theta(T)\nu=0$. Applying \eqref{eq:moment-identity} to $Q$ gives
\begin{equation}\label{eq:normality-moment}
 4\tr(M_\nu Q)
 =\langle[\Theta(T),\Theta(T^*)]\nu,\nu\rangle
 =\|\Theta(T^*)\nu\|^2.
\end{equation}
Here we have used $\Theta(T)^*=\Theta(T^*)$ for the full bracket inner product. The curvature identity \eqref{eq:curvature} therefore yields
\begin{equation}\label{eq:normality-curvature-trace}
 \tr(PQ)=\frac16\|\Theta(T^*)\nu\|^2
 -\frac12\tr(B_\nu Q)
 -\tr\bigl(\Sym(\ad_U^\nu)Q\bigr).
\end{equation}

We next compute the same trace from the soliton derivation. Lemma~\ref{lem:solvable-preliminaries} gives $DA=0$. Thus \eqref{eq:double-derivation} gives $P=\Sym L-cI$ and $LA=-\zeta$, where $\zeta$ is viewed as an element of $\g^*\subset\h$. Since $L$ is a derivation, $[L,T]=\ad_{-\zeta}^\nu$. Hence
\begin{equation}\label{eq:normality-soliton-trace}
 \tr(PQ)=\tr(LQ)
 =\tr\bigl([L,T]T^*\bigr)
 =\langle d\zeta,\alpha\rangle.
\end{equation}
For the last equality, the only nonzero block of $\ad_{-\zeta}^\nu$ is its lower-left block $K_{d\zeta}$, since $d\zeta(X,Y)=-\zeta([X,Y])$. The lower-left block of $T$ is $K_\alpha$, so the trace is $\tr(K_{d\zeta}K_\alpha^*)=\langle d\zeta,\alpha\rangle$. This is the full contraction pairing of two-forms.

The one-form $A^\flat$ is closed because $A\perp[\g,\g]$. Taking exterior adjoints of
$d(A^\flat\wedge\cdot)+(A^\flat\wedge\cdot)d=0$ gives
$d^\dagger\iota_A+\iota_A d^\dagger=0$. Since contractions anticommute and $\delta=d^\dagger+\iota_U$, we also have $\delta\iota_A+\iota_A\delta=0$. Moreover, $U\in\af$ and \eqref{eq:normality-hypotheses} imply $\iota_U\alpha=0$. It follows that
\begin{equation}\label{eq:normality-codifferential}
 d^\dagger\alpha=\delta\alpha=-\iota_A\eta=-\zeta.
\end{equation}
The full contraction pairing of two-forms is twice the exterior pairing. Using \eqref{eq:norm-convention} and the exterior adjoint $d^\dagger$, equations \eqref{eq:normality-soliton-trace} and \eqref{eq:normality-codifferential} give
\begin{equation}\label{eq:normality-defect-trace}
 \tr(PQ)=2\langle d\zeta,\alpha\rangle_\wedge
 =2\langle\zeta,d^\dagger\alpha\rangle_\wedge=-2|\zeta|^2.
\end{equation}

It remains to eliminate the last two terms in \eqref{eq:normality-curvature-trace}. Condition \eqref{eq:normality-hypotheses} gives $T\af=0$ and
$T\h\subset\n\oplus\n^*$. Therefore $T^*\af=0$ and $Q\af=0$. Since $B_\nu$ vanishes on $W\oplus\af^*$ and maps $\af$ into itself, $\tr(B_\nu Q)=0$. Also
\begin{equation}\label{eq:normality-modular-commutator}
 [\ad_U^\nu,T]=\ad_{[U,A]_\nu}^\nu=0,
\end{equation}
because $U,A\in\af$ and both the bracket and the two-horizontal torsion vanish. By cyclicity of the trace,
$\tr(\Sym(\ad_U^\nu)Q)=\tr([\ad_U^\nu,T]T^*)=0$.
Combining \eqref{eq:normality-curvature-trace} with \eqref{eq:normality-defect-trace} proves \eqref{eq:courant-normality-square}.

All terms in \eqref{eq:courant-normality-square} are nonnegative, so $\Theta(T^*)\nu=0$. Thus $T^*$ is a derivation. Since $T^*A=0$, its derivation identity gives
$[T^*,T]=[T^*,\ad_A^\nu]=\ad_{T^*A}^\nu=0$.
\end{proof}

\begin{proof}[Proof of the structural assertion of Theorem~\ref{thm:structure}]
There is nothing to prove when $\af=0$. Suppose that $\af\ne0$. Equation~\eqref{eq:horizontal} gives $\lambda\leq0$. If $\lambda=0$, it also gives $\Sym(\ad_A)=0$ and $\iota_AH=0$ for every $A\in\af$. Since $\ad_A\g\subset\n$, its adjoint annihilates $\af$. The skew-symmetry of $\ad_A$ then implies $\ad_A\af=0$. Thus $[\af,\af]=0$, and each $\ad_A$ is normal.

Suppose that $\lambda=-c<0$. By \eqref{eq:standard}, the hypotheses of Lemma~\ref{lem:courant-normality} hold. Fix $A\in\af$ and write $V=\ad_A$ and $\alpha=\iota_AH$. In the original splitting $\h=\g\oplus\g^*$,
\begin{equation}\label{eq:normality-blocks}
 T=\begin{pmatrix}V&0\\ K_\alpha&-V^*\end{pmatrix},
 \qquad
 T^*=\begin{pmatrix}V^*&K_\alpha^*\\0&-V\end{pmatrix}.
\end{equation}
The upper-left block of $[T,T^*]=0$ is
\begin{equation}\label{eq:normality-positive-block}
 [V,V^*]-K_\alpha^*K_\alpha=0.
\end{equation}
Taking the trace gives $\|K_\alpha\|^2=0$. Hence $\iota_AH=0$ and $[\ad_A,\ad_A^*]=0$. Since $A$ was arbitrary, $H\in\Lambda^3\n^*$ and all the horizontal adjoint maps are normal. Finally, the maps $\ad_A|_\n$ commute because $\af$ is abelian. This proves the structural assertion.
\end{proof}

The metric on the complement is constrained by the symmetric parts of these commuting normal derivations. For $H=0$ and $\lambda<0$, the following formula is \cite[Theorem~4.8(iv)]{La2011}.

\begin{corollary}\label{cor:horizontal-metric}
Under the hypotheses of Theorem~\ref{thm:structure}, put $V_A=\ad_A|_\n$ for $A\in\af$. Then
\begin{equation}\label{eq:horizontal-metric-condition}
 -\lambda\langle A,B\rangle
 =\tr\bigl(\Sym(V_A)\Sym(V_B)\bigr)
 \qquad(A,B\in\af).
\end{equation}
In particular, $\lambda\leq0$ if $\af\ne0$. In the steady case every $V_A$ is skew-symmetric.
\end{corollary}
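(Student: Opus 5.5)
The approach is to polarize the horizontal identity \eqref{eq:horizontal} of Lemma~\ref{lem:horizontal}, and then use the structural conclusions of Theorem~\ref{thm:structure} to remove the torsion term and restrict the trace to $\n$. We may assume $\af\ne0$.

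First, by Theorem~\ref{thm:structure} we have $H(\af,\g,\g)=0$, so $\iota_XH=0$ for every $X\in\af$. Equation~\eqref{eq:horizontal} therefore reduces to
\[
\lambda\|X\|^2=-\|\Sym(\ad_X)\|^2 \qquad (X\in\af).
\]
This already gives $\lambda\le0$ whenever $\af\ne0$.

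Second, we identify $\|\Sym(\ad_X)\|^2$ with $\|\Sym(V_X)\|^2$. By Lemma~\ref{lem:solvable-preliminaries}, $\ad_X\g\subset\n$. By Theorem~\ref{thm:structure}, $[\af,\af]=0$, so $\ad_X\af=0$. In the splitting $\g=\af\oplus\n$, the operator $\ad_X$ therefore has $V_X$ as its $\n\n$ block and all other blocks zero. Consequently $\Sym(\ad_X)=0\oplus\Sym(V_X)$ and $\|\Sym(\ad_X)\|^2=\tr(\Sym(V_X)^2)$. This yields the diagonal case of \eqref{eq:horizontal-metric-condition}.

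Third, both sides of \eqref{eq:horizontal-metric-condition} are symmetric bilinear forms in $(A,B)$, because $X\mapsto V_X$ is linear. We polarize by applying the diagonal identity to $A+B$, $A$ and $B$, which gives the full formula. In the steady case $\lambda=0$, the diagonal identity forces $\Sym(V_A)=0$, so every $V_A$ is skew-symmetric.

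The only delicate point is the block reduction in the second step. It relies on both $[\af,\af]=0$ and $\ad_X\g\subset\n$; without these, $\Sym(\ad_X)$ would have off-diagonal contributions. Both facts are supplied by Theorem~\ref{thm:structure} and Lemma~\ref{lem:solvable-preliminaries}, so I expect no genuine obstacle.
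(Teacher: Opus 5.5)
Your proposal is correct and matches the paper's proof: you use $H(\af,\g,\g)=0$ to remove the torsion term from \eqref{eq:horizontal}, use $[\af,\af]=0$ together with $\ad_A\g\subset\n$ to reduce $\|\Sym(\ad_A)\|^2$ to $\|\Sym(V_A)\|^2$, and then polarize. Your explicit block check simply spells out the paper's remark that $\ad_A$ vanishes on $\af$.
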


\begin{proof}
By the structural assertion of Theorem~\ref{thm:structure}, $\iota_AH=0$ and $\ad_A$ vanishes on $\af$. Equation~\eqref{eq:horizontal} reduces to
$-\lambda|A|^2=\|\Sym(V_A)\|^2$. Polarization gives \eqref{eq:horizontal-metric-condition}.
\end{proof}

We next determine the sign of the soliton constant and give the precise reconstruction statement. This extends the classical construction and characterization in \cite[Proposition~4.3 and Theorem~4.8]{La2011}. The main point is that normal derivations preserving the torsion automatically commute with the nilsoliton derivation. This also ensures that the mean-curvature correction does not change the torsion weight.

\begin{theorem}\label{thm:extension}
Let $(\n,\langle\cdot,\cdot\rangle_\n,H)$ be a generalized nilsoliton with $\Ric_\n-H^2/4=\lambda I+D_\n$ and $\lambda<0$. Let $\af\subset\Der(\n)$ be an abelian subalgebra consisting of normal endomorphisms, and write $S(A)=\Sym A$. Extend $H$ by zero to $\g=\af\ltimes\n$. There is a positive-definite inner product on $\af$ for which the orthogonal extension satisfies \eqref{eq:soliton} with constant $\lambda$ if and only if
\begin{equation}\label{eq:extension-conditions}
\rho(A)H=0\quad(A\in\af),\qquad S|_\af\text{ is injective}.
\end{equation}
This inner product is given by
\begin{equation}\label{eq:extension-metric}
\langle A,B\rangle_\af=-\lambda^{-1}\tr\bigl(S(A)S(B)\bigr).
\end{equation}
If $U\in\af$ is defined by $\langle U,A\rangle_\af=\tr A$, the soliton derivation is
\begin{equation}\label{eq:extension-derivation}
D_\g=0|_\af\oplus\bigl(D_\n-S(U)\bigr)|_\n.
\end{equation}
The nilradical of $\g$ is $\n$. Conversely, every expanding algebraic generalized Ricci soliton on a solvable Lie algebra is obtained by this construction from its induced metric and three-form on the nilradical.
\end{theorem}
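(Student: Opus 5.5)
The plan is to verify \eqref{eq:soliton} on $\g=\af\ltimes\n$ block by block in the orthogonal splitting $\g=\af\oplus\n$. The converse direction should then follow directly from the results already proved.

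\emph{Step 1: upgrade each normal $A\in\af$ to a derivation whose adjoint is also a derivation.} For $A\in\af$, work on the nilpotent quadratic double $W=\n\oplus\n^*$ with bracket $\nu_0$ twisted by $H$. Put $T_A=\operatorname{diag}(A,-A^*)$. The hypothesis $\rho(A)H=0$ says exactly that $T_A\in\Der(\nu_0)\cap\mathfrak{so}(q_W)$. Normality of $A$ gives
\[
[T_A,T_A^*]=\operatorname{diag}([A,A^*],[A^*,A])=0.
\]
By \eqref{eq:moment-identity} and $\Theta(T_A)\nu_0=0$, we get $4\tr(M_{\nu_0}[T_A,T_A^*])=\|\Theta(T_A^*)\nu_0\|^2$. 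Hence $\Theta(T_A^*)\nu_0=0$. So $A^*$ is a derivation of $\n$ with $\rho(A^*)H=0$, and therefore $S(A)$ and $\Skew A$ are derivations of $\n$ preserving $H$. This is the same commutator argument as in Lemma~\ref{lem:courant-normality}.

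\emph{Step 2: show $[D_\n,A]=0$.} Since $\n$ is a generalized nilsoliton, harmonicity \cite{ChChZh2026a} and Propositions~\ref{prop:cotangent-curvature}--\ref{prop:cotangent-derivation} make the double $L_0$ a $q$-conformal derivation with symmetric $L_0$. So $\nu_0$ is, up to scaling, a critical point of $\|m_q\|^2$, and its label $\beta$ is a multiple of $L_0-cI$. Apply Lemma~\ref{lem:quadratic-git-inequalities} to both derivations $T_A$ and $T_A^*$:
\[
\langle[\beta,T_A],T_A\rangle\ge0,\qquad \langle[\beta,T_A^*],T_A^*\rangle\ge0.
\]
The second expression equals $-\sum_r r\|T_r\|^2$, where the $T_r$ are the $\ad\beta$-eigencomponents of $T_A$. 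Together with the first, this forces $[\beta,T_A]=0$, and hence $[D_\n,A]=0$. I expect this to be the main obstacle. The delicate points are identifying $\beta$ with the nilsoliton derivation, where the lower-left block vanishes because $\eta=0$, and confirming the equality case of the lemma at a critical point. If this route fails, I would instead use the uniqueness of the nilsoliton derivation under the stabilizer of $(\mu,H)$.

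\emph{Step 3: curvature of the extension.} Use \eqref{eq:ricci-decomposition} with $U\in\af$ and $\af$ abelian. On $\n$ this gives $\Ric_\g|_\n=\Ric_\n-S(U)$, because the terms $\sum_i[A_i,A_i^*]$ vanish by normality. On $\af$ it gives $\Ric_\g(A,B)=-\tr(S(A)S(B))$. The mixed block $\Ric_\g(A,X)$ should vanish by $\tr(M_\n E)=0$ applied to the derivations from Step~1. Since $H\in\Lambda^3\n^*$, $H^2$ lives on $\n$. Therefore $R=\lambda I+D_\g$ holds if and only if the $\af$-block equals $\lambda\langle\cdot,\cdot\rangle$. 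This is precisely the metric \eqref{eq:extension-metric}, which is positive definite exactly when $S|_\af$ is injective. The resulting derivation is \eqref{eq:extension-derivation}; it is a derivation of $\g$ by Steps~1--2, since $S(U)$ commutes with $\af$.

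\emph{Step 4: torsion equation and converse.} $dH=0$ on $\g$ because $\rho(A)H=0$. Moreover, $\delta_\g H=d^\dagger_\n H+\iota_UH=0$ by nilsoliton harmonicity. Finally,
\[
\rho(D_\g)H=\rho(D_\n)H=2\lambda H,
\]
because $\rho(S(U))H=0$ by Step~1. Conversely, let $\g$ be an expanding solvable soliton. Theorem~\ref{thm:structure}, Corollary~\ref{cor:horizontal-metric} and harmonicity give $[\af,\af]=0$, $H\in\Lambda^3\n^*$, normal $\ad_A|_\n$ and the metric formula. Restricting the equations to $\n$ gives a generalized nilsoliton with the same $\lambda$. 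Injectivity of $S$ follows from \eqref{eq:horizontal-metric-condition}. Finally, $\n$ is the nilradical because $\ad_A$ has a nonzero symmetric part for $A\ne0$, so it is not nilpotent.
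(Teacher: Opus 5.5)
Your outline is correct and follows the paper's architecture. Step~1 is the first half of Lemma~\ref{lem:normal-stabilizer}, run on the double with $T_A=\operatorname{diag}(A,-A^*)$ instead of with $\pi=\Theta\oplus\rho$ on $v=(\mu,H/\sqrt3)$; the two are equivalent. Steps~3--4 are \eqref{eq:extension-ricci}, \eqref{eq:extension-differentials} and \eqref{eq:extension-weight}. Your converse goes through Theorem~\ref{thm:structure} and Corollary~\ref{cor:horizontal-metric}, as in the paper.

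The real difference is Step~2, and your GIT argument is valid. For a nilsoliton $\eta=0$, so $\nu_0$ is a critical point of $\|m_q\|^2$ with $m_q(\nu_0)$ a positive multiple of $L_0-cI$. Moreover, $\langle[\beta,T_A],T_A\rangle=\tr(\beta[T_A,T_A^*])=0$ by normality, so the equality case of Lemma~\ref{lem:quadratic-git-inequalities} gives $[\beta,T_A]=0$ directly. However, this route needs a B\"ohm--Lafuente fact the paper never records: at a critical point the adapted label may be taken to be $m_q(\nu_0)$, so that the lemma applies to this $\beta$. It also needs separate treatment of $\nu_0=0$.

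The route is also unnecessary. Once Step~1 gives $\pi(A)v=\pi(A^*)v=0$, the identity \eqref{eq:joint-moment} yields $4\tr([R_\n,A]B)=\langle\pi([A,B])v,v\rangle=0$ for every $B$. Hence $[R_\n,A]=[D_\n,A]=0$. That is the paper's proof, so what you expected to be the main obstacle is a two-line consequence of Step~1.

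Three points need repair.

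\emph{First, the mixed Ricci block.} It equals $\Ric_\g(A,X)=-\tfrac12\tr(A^*\ad_X|_\n)$, and orthogonality of $M_\mu$ to derivations does not control this trace. What kills it is that $A^*$ is a derivation by Step~1, so it preserves the lower central series of $\n$, while $\ad_X|_\n$ strictly lowers it. Thus $A^*\ad_X|_\n$ is nilpotent and traceless. The paper uses Lie's theorem on $\R A^*\ltimes\n$ for the same conclusion.

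\emph{Second, the nilradical in the ``only if'' direction.} There you need $\n$ to be the nilradical before invoking Corollary~\ref{cor:horizontal-metric}, and before using Lemma~\ref{lem:solvable-preliminaries} to get $D|_\af=0$. That corollary is exactly where injectivity of $S$ comes from, so you cannot use $S(A)\ne0$ to identify the nilradical. Use instead that a nonzero normal operator is not nilpotent, together with Lie's theorem. In a common triangular basis the diagonal of $\ad_{A+X}|_\n$ is that of $A$, so no element $A+X$ with $A\ne0$ lies in the nilradical. This also covers the elements $A+X$, not just $\ad_A$.

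\emph{Third, two unstated facts.} The claim $[S(U),A]=0$ uses that commuting normal operators commute with each other's adjoints, for instance by simultaneous unitary diagonalization; you should state this. In the converse, the induced nilsoliton derivation is $D|_\n+S(U)$, not the restriction $D|_\n$.
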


The transpose of a normal derivation is again a derivation by \cite[Remark~4.10]{LaLa2014}. For $H=0$, its commutation with $\Ric_\n$ follows from \cite[Remark~2.5]{LaLa2014}. The next lemma includes the three-form in the same argument.

\begin{lemma}\label{lem:normal-stabilizer}
Let $(\n,\mu,\langle\cdot,\cdot\rangle)$ be a nilpotent metric Lie algebra, let $H\in\Lambda^3\n^*$, and put $R_\n=\Ric_\n-H^2/4$. If $A\in\Der(\n)$ is normal and $\rho(A)H=0$, then
\begin{equation}\label{eq:normal-stabilizer}
A^*\in\Der(\n),\qquad \rho(A^*)H=0,\qquad [R_\n,A]=0.
\end{equation}
In particular, both $S(A)=\Sym A$ and $K(A)=\Skew A$ are derivations preserving $H$. If $R_\n=\lambda I+D_\n$, then $[D_\n,A]=0$.
\end{lemma}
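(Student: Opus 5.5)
The plan is to transport the statement to the twisted cotangent algebra of $\n$, where the stabilizer of $H$ becomes a stabilizer of a bracket. Let $\h_\n=\n\oplus\n^*$ carry the bracket $\nu_\n$ of \eqref{eq:cotangent-bracket} twisted by $H$, with the product metric and the form $q$. Put $\hat A=\operatorname{diag}(A,-A^*)$. The two hypotheses $A\in\Der(\n)$ and $\rho(A)H=0$ together say exactly that $\hat A\in\Der(\nu_\n)$. This holds because the lower-left block vanishes, so the covector component of the defect \eqref{eq:double-derivation-defect} reduces to $\rho(A)H$. Moreover $\hat A\in\mathfrak{so}(q)$, $\hat A^*=\operatorname{diag}(A^*,-A)$, and normality of $A$ gives $[\hat A,\hat A^*]=\operatorname{diag}([A,A^*],-[A^*,A])=0$.

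\textbf{Step 1: the transpose is a derivation.} Mimic the normality argument of Lemma~\ref{lem:courant-normality}. Set $Q=[\hat A,\hat A^*]=0$ and apply \eqref{eq:moment-identity} to $\nu_\n$ together with $\Theta(\hat A)\nu_\n=0$. This gives
\[
0=4\tr(M_{\nu_\n}Q)=\langle[\Theta(\hat A),\Theta(\hat A^*)]\nu_\n,\nu_\n\rangle=\|\Theta(\hat A^*)\nu_\n\|^2,
\]
using $\Theta(\hat A)^*=\Theta(\hat A^*)$. Hence $\hat A^*\in\Der(\nu_\n)$. Reading off the blocks of this derivation, the $\n$-block gives $A^*\in\Der(\n)$ and the covector component gives $\rho(A^*)H=0$. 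It follows that $S(A)$ and $K(A)$ are derivations preserving $H$.

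\textbf{Step 2: commutation with $R_\n$.} Since $\n$ is nilpotent, $B=0$ and $U=0$. Proposition~\ref{prop:cotangent-curvature} applied to $\h_\n$ then gives $P=\tfrac23P_q(M_{\nu_\n})$, whose diagonal blocks are $\pm R_\n$. The moment map $M_{\nu_\n}$ is equivariant under the compact group $K=\mathrm O(q)\cap\mathrm O(\h_\n)$. The operator $\hat K=\operatorname{diag}(K(A),K(A))$ is skew and lies in $\mathfrak{so}(q)$, and by Step 1 it is a derivation of $\nu_\n$. Therefore $\exp(t\hat K)$ is an orthogonal automorphism of $\nu_\n$, so it commutes with $M_{\nu_\n}$; it also commutes with $J$, hence with $P_q(M_{\nu_\n})$. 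Taking the upper-left block yields $[R_\n,K(A)]=0$.

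The symmetric part needs more care, and this is the main obstacle. I would first try the same equivariance for the full $\mathrm O(q)$ action on $\mathfrak p_q$-components of the moment map. More concretely, $S(A)$ is a symmetric derivation of $\n$ preserving $H$, and a direct computation with \eqref{eq:ricci-decomposition} gives $\tr(\Ric_\n[S(A),E])=0$ for every symmetric $E$. The cleanest route is the real analytic one. The operator $\exp(tS(A))$ is an automorphism of $\n$ preserving $H$, so pulling back the metric identifies $(\n,\langle\exp(tS)\cdot,\exp(tS)\cdot\rangle,H)$ with the original data. Differentiating the naturality of $R$ under this isometry gives
\[
\frac{d}{dt}R_{g_t}=-[S,R_\n]\quad\text{(up to sign)}.
\]
The variation of $R$ along the conformal-derivation direction $\operatorname{diag}(S,-S)$ is then computed by the moment-map formula, since $\nu_\n$ is fixed by this $\mathfrak p_q$-direction. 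If $\Theta(X)\nu_\n=0$ for some $X\in\mathfrak p_q$, then the gradient of $\|\nu_\n\|^{-2}\langle m_q,\cdot\rangle$ along $X$ vanishes, and the standard Kirwan-type identity $\tfrac{d}{dt}m_q(e^{tX}\nu)=\dots$ forces $[m_q(\nu_\n),X]=0$. Concretely, for $Y\in\mathfrak{so}(q)$ skew we have
\[
\langle m_q(\nu_\n),[X,Y]\rangle\|\nu_\n\|^2=\langle\Theta([X,Y])\nu_\n,\nu_\n\rangle,
\]
and this vanishes because $\Theta(X)\nu_\n=0=\Theta(X^*)\nu_\n$. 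Hence $m_q$ is orthogonal to $[X,\mathfrak k]$, which is equivalent to $[m_q,X]=0$. Applying this with $X=\operatorname{diag}(S,-S)$ and taking the upper-left block gives $[R_\n,S(A)]=0$. Combined with the skew part, this yields $[R_\n,A]=0$.

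Finally, if $R_\n=\lambda I+D_\n$, then $[D_\n,A]=[R_\n,A]=0$.
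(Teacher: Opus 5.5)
Your proof is correct, but it takes a different route from the paper. You lift everything to the twisted cotangent double and split $A$ into $K(A)$ and $S(A)$; the paper stays on $\n$. It encodes the data as $v=(\mu,H/\sqrt3)$ in the representation $\pi=\Theta\oplus\rho$ and checks directly that $4\tr(R_\n B)=\langle\pi(B)v,v\rangle$ for every $B\in\operatorname{End}(\n)$. It then notes that $\pi(A)$ is normal, so $\pi(A)v=0$ forces $\pi(A^*)v=0$. One line, $4\tr([R_\n,A]B)=\langle\pi([A,B])v,v\rangle=0$ for all $B$, then gives $[R_\n,A]=0$ with no decomposition of $A$.

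Your Step~1 is the same normal-operator argument, with $\nu_\n$ and $\hat A$ in place of $v$ and $\pi(A)$. Since $Q=0$, the moment identity is superfluous there; normality of $\Theta(\hat A)$ suffices.

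Your Step~2 is also sound. The skew part follows from orthogonal equivariance of $M_{\nu_\n}$. For the symmetric part, take $X=\widehat{S(A)}\in\mathfrak p_q$ and $Y\in\mathfrak{so}(q)\cap\mathfrak{so}(\h_\n)$. Your computation $\langle m_q(\nu_\n),[X,Y]\rangle\|\nu_\n\|^2=\langle\Theta(Y)\nu_\n,\Theta(X^*)\nu_\n\rangle=0$ correctly yields $[m_q(\nu_\n),X]=0$. By \eqref{eq:projected-double-moment}, its upper-left block is a positive multiple of $[R_\n,S(A)]$.

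The split is unnecessary, though. The same identity with $X=\hat A$ and arbitrary $Y\in\operatorname{End}(\h_\n)$ gives $[M_{\nu_\n},\hat A]=0$ directly. Then $[P_q(M_{\nu_\n}),\hat A]=0$ follows, because $J\hat AJ=-\hat A^*$ and $[M_{\nu_\n},\hat A^*]=-[M_{\nu_\n},\hat A]^*$. This is exactly the paper's argument transported to the double.

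What your route buys is a view of the lemma as a commutation statement for the $\mathrm O(q)$ moment map of Section~\ref{sec:quadratic-git}. It also makes visible that the hypotheses say exactly $\hat A\in\Der(\nu_\n)$. What it costs is reliance on the block normalization of Proposition~\ref{prop:cotangent-curvature} to return to $R_\n$. That reliance is harmless even though the lemma does not assume $dH=0$, since the diagonal-block computation is purely tensorial.

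You should delete the intermediate attempts in the symmetric part. The identity $\tr(\Ric_\n[S(A),E])=0$ for symmetric $E$ holds for every symmetric operator, because $[S(A),E]$ is skew and $\Ric_\n$ is symmetric, so it carries no information. The naturality computation only reproduces $R_{g_t}=e^{-tS}R_\n e^{tS}$ and gives no independent evaluation of the left-hand side, so it is circular. Only the final ``Concretely'' computation proves the claim. There is also a harmless sign slip: the lower-right block of $[\hat A,\hat A^*]$ is $[A^*,A]$.
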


\begin{proof}
Let $\pi=\Theta\oplus\rho$ be the representation on the direct sum of bracket tensors and three-forms, with their full contraction inner products, and put $v=(\mu,H/\sqrt3)$. Nilpotency gives $\Ric_\n=M_\mu$. The moment pairing in \cite[Proposition~4.13, equation~(31)]{FuLaSt2024}, restricted to the diagonal action, gives
\begin{equation}\label{eq:joint-moment}
4\tr(R_\n B)=\langle\pi(B)v,v\rangle
\qquad(B\in\operatorname{End}(\n)).
\end{equation}
Indeed, $\langle\rho(B)H,H\rangle=-3\tr(H^2B)$, so $\langle\pi(B)v,v\rangle=4\tr(M_\mu B)-\tr(H^2B)$. This verifies the normalization in \eqref{eq:joint-moment}. The representation satisfies $\pi(B)^*=\pi(B^*)$. Since $[A,A^*]=0$, the operator $\pi(A)$ is normal. The hypotheses give $\pi(A)v=0$, so $\pi(A^*)v=0$ as well. This proves the first two assertions of \eqref{eq:normal-stabilizer}.

For arbitrary $B\in\operatorname{End}(\n)$, equation \eqref{eq:joint-moment} now yields
\begin{equation}\label{eq:normal-stabilizer-commutator}
\begin{aligned}
4\tr([R_\n,A]B)
&=\langle\pi([A,B])v,v\rangle\\
&=\langle\pi(B)v,\pi(A^*)v\rangle
-\langle\pi(B)\pi(A)v,v\rangle=0.
\end{aligned}
\end{equation}
The trace pairing on endomorphisms is nondegenerate, so \eqref{eq:normal-stabilizer-commutator} implies $[R_\n,A]=0$. Taking linear combinations of $A$ and $A^*$ gives the assertions about $S(A)$ and $K(A)$. The last assertion follows from $D_\n=R_\n-\lambda I$.
\end{proof}

We record the curvature and differential formulas used in both directions. Let $\af\subset\Der(\n)$ be abelian and consist of normal operators preserving $H$. Give $\af$ any positive-definite inner product and form the orthogonal semidirect product $\g=\af\ltimes\n$. Thus $[A,X]=AX$, $[\af,\af]=0$, and the bracket on $\n$ is unchanged. Write $A_1,\ldots,A_r$ for an orthonormal basis of $\af$ and $a^1,\ldots,a^r$ for the corresponding dual forms. The mean-curvature vector is the element $U\in\af$ defined by $\langle U,A\rangle=\tr A$. For a form $\omega$ on $\n$, extended by zero on horizontal slots, the Chevalley--Eilenberg differential and the geometric codifferential satisfy
\begin{equation}\label{eq:extension-differentials}
d_\g\omega=d_\n\omega+\sum_i a^i\wedge\rho(A_i)\omega,
\qquad \delta_\g\omega=d_\n^\dagger\omega.
\end{equation}
The first identity follows by evaluating on vertical vectors and on one horizontal vector. For the second, the differential does not decrease the number of horizontal slots, so its exterior adjoint on a vertical form is $d_\n^\dagger$. The mean-curvature term vanishes because $\iota_U\omega=0$.

The Ricci formulas in \cite[Section~4, equation~(25)]{La2011} reduce to
\begin{equation}\label{eq:extension-ricci}
\begin{aligned}
\langle\Ric_\g A,B\rangle&=-\tr\bigl(S(A)S(B)\bigr),\qquad
\langle\Ric_\g A,X\rangle=0,\\
\Ric_\g|_\n&=\Ric_\n-S(U),
\qquad A,B\in\af,\quad X\in\n.
\end{aligned}
\end{equation}
To verify these formulas, substitute the semidirect-product bracket into \eqref{eq:moment-operator} and \eqref{eq:ricci-decomposition}. The vertical block before imposing normality is $\Ric_\n+\frac12\sum_i[A_i,A_i^*]-S(U)$, and the commutators vanish. The mixed block is $-\frac12\tr(A^*\ad_X|_\n)$. By Lemma~\ref{lem:normal-stabilizer}, $A^*$ is a derivation. Lie's theorem simultaneously triangularizes the action of the solvable Lie algebra $\R A^*\ltimes\n$ on $\n_\mathbb C$. The operator $\ad_X|_\n$ is nilpotent, so it is strictly upper triangular in this basis and the mixed trace vanishes. The horizontal block is $-\frac12\tr(A^*B)-\frac12\tr(AB)$, which is the first expression in \eqref{eq:extension-ricci}.

In this semidirect product, $\n$ is the full nilradical. Indeed, Lie's theorem simultaneously triangularizes the adjoint action of $\g_\mathbb C$ on $\n_\mathbb C$. For $A\in\af$ and $X\in\n$, the diagonal of $\ad_{A+X}|_\n$ is the diagonal of $A$, since $\ad_X|_\n$ is nilpotent. If $A\ne0$, normality implies that $A$ has a nonzero eigenvalue. On the other hand, an element of the nilradical acts nilpotently on every invariant subspace of that nilpotent ideal, including $\n$. Thus $A+X$ cannot belong to the nilradical. The reverse inclusion holds because $\n$ is a nilpotent ideal. This argument does not use $\rho(A)H=0$.

We now apply these formulas to the structural decomposition already proved in Theorem~\ref{thm:structure}. Generalized nilsolitons with nonzero bracket or torsion are expanding by \cite[Proposition~7.7]{FuLaSt2024}. For solvable algebras, the nonzero-torsion sign conclusion is \cite[Theorem~4.2]{Mu2026}, and the nonflat torsion-free conclusion follows from \cite[Proposition~4.6]{La2011}. In the steady case, Lemma~\ref{lem:invariant-scalar} gives a Bismut--Ricci-flat metric, whose flatness and vanishing torsion also follow from \cite[Remark~2.17 and Corollary~3.3]{CoFrGa2024}. The following argument derives these conclusions directly from the nilradical and includes the flat cases.

\begin{corollary}\label{cor:expanding-solvable}
Let $(\g,\langle\cdot,\cdot\rangle,H)$ be an algebraic generalized Ricci soliton on a solvable Lie algebra, with constant $\lambda$ and derivation $D$. If $H\ne0$ or the corresponding left-invariant metric is nonflat, then $\lambda<0$. If $\lambda=0$, then $H=0$, the metric is flat and $D=0$. If $\lambda>0$, then $\g$ is abelian, $H=0$ and $D=-\lambda I$.
\end{corollary}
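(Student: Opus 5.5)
The plan is to split according to whether $\af=\n^\perp$ is zero, that is, whether $\g$ is nilpotent. Everything rests on the structural assertion of Theorem~\ref{thm:structure}, which is already proved for all values of $\lambda$, together with Lemma~\ref{lem:horizontal}, Lemma~\ref{lem:invariant-scalar} and \eqref{eq:horizontal-metric-condition}. Since the three assertions of the corollary are mutually consistent, it suffices to prove two things. First, $\lambda\ge0$ forces $H=0$ and flatness. Second, $\lambda>0$ forces $\g$ abelian with $D=-\lambda I$.

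\emph{Case $\af\ne0$.} Corollary~\ref{cor:horizontal-metric} gives $\lambda\le0$, so only $\lambda=0$ needs treatment. Then \eqref{eq:horizontal-metric-condition} makes every $V_A=\ad_A|_\n$ skew-symmetric, and Theorem~\ref{thm:structure} gives $[\af,\af]=0$ and $H\in\Lambda^3\n^*$. Lemma~\ref{lem:invariant-scalar} gives $D=0$ and $R=0$, so $\Ric=H^2/4\ge0$. Restricting to $\n$, the skewness of $V_A$ together with \eqref{eq:extension-ricci} (whose derivation is purely algebraic and uses only normality, here skewness) gives $\Ric_\g|_\n=\Ric_\n$, since $S(U)=0$. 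The same formula gives $\Ric_\g|_\af=0$.

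Hence $\Ric_\n=H^2/4|_\n$. I now intend to use nilpotent facts. Take the trace: $\scal_\n=-\frac14\|\mu_\n\|^2\le0$, while $\tr H^2\ge0$. Therefore $\mu_\n=0$ and $H=0$. So $\n$ is abelian and $\g=\af\ltimes\n$ with $\af$ acting by commuting skew maps. This is Milnor's flat case (skew $\ad_A$ with $[\af,\af]=0$), and flatness can be checked directly from the Levi-Civita formula $\nabla_A=\ad_A$, $\nabla_X=0$. The point to verify carefully is that \eqref{eq:extension-ricci} applies without the expanding hypothesis. It does, since it is a curvature computation for an orthogonal semidirect product with normal derivations.

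\emph{Case $\af=0$, so $\g=\n$ is nilpotent.} Harmonicity holds by Theorem~\ref{thm:harmonicity}, so $\delta H=0$. Here I would use the nilpotent moment identities. $D$ is a symmetric derivation, so $\tr(M_\mu D)=0$. Also $\tr D\ge0$? Rather, I pair the metric equation with $D$ and with $I$. From $\Ric=M_\mu$ and \eqref{eq:moment-identity}, $\tr\Ric=-\frac14\|\mu\|^2$. Lemma~\ref{lem:invariant-scalar} gives $2\lambda(\scal-\frac1{12}|H|^2)=2\|R\|^2\ge0$, and the factor $\scal-\frac1{12}|H|^2=-\frac14\|\mu\|^2-\frac1{12}|H|^2$ is $\le0$.

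So $\lambda\ge0$ forces $\|R\|^2=0$ whenever $\lambda>0$ or the factor vanishes. More precisely, for $\lambda>0$ the left side is $\le0$, so both sides vanish: $R=0$, $\mu=0$ and $H=0$. Then $R=\lambda I+D=0$ gives $D=-\lambda I$, and $\g$ is abelian. For $\lambda=0$, Lemma~\ref{lem:invariant-scalar} gives $R=0$, so $\scal_\n=\frac14|H|^2$ (using $\tr R=0$). The sign argument then gives $\mu=0$ and $H=0$, hence flatness.

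Combining both cases: if $\lambda\ge0$ then $H=0$ and the metric is flat, which is the contrapositive of the first claim. If $\lambda>0$, the first case is excluded ($\af\ne0$ forces $\lambda\le0$), and the second case gives the abelian conclusion. The main obstacle is the $\lambda=0$, $\af\ne0$ subcase, where one must show that the nilradical bracket and the torsion vanish. I would handle it by the trace argument above, after confirming that the vertical Ricci block equals $\Ric_\n$.
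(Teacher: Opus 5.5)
Your proof is correct, but it is organized differently from the paper's.

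\textbf{The paper's route.} It uses Theorems~\ref{thm:harmonicity} and~\ref{thm:structure} together with Lemma~\ref{lem:normal-stabilizer} to show that, for every value of $\lambda$, the restriction to $\n$ is a generalized nilsoliton with the same constant. Its derivation is $D_\n=D|_\n+S(U)$. The nilsoliton identity \eqref{eq:nilsoliton-sign} then forces $\lambda<0$ whenever $(\mu_\n,H)\ne(0,0)$. Only the degenerate case $\mu_\n=0$, $H=0$ is then split according to whether $\af=0$.

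\textbf{Your route.} You split on $\af$ first. When $\af\ne0$, the horizontal identity leaves only $\lambda=0$. There you combine $R=0$ from Lemma~\ref{lem:invariant-scalar}, the skewness of the $V_A$, and the vertical block of \eqref{eq:extension-ricci} to get $\Ric_\n=\tfrac14H^2$ on $\n$, and the trace then kills $\mu_\n$ and $H$. When $\af=0$, you apply Lemma~\ref{lem:invariant-scalar} directly. For nilpotent $\g$ this is \eqref{eq:nilsoliton-sign} up to the nonnegative term $\tfrac12|\delta H|^2$.

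\textbf{What your route buys.} It avoids Lemma~\ref{lem:normal-stabilizer} and the torsion-weight computation on $\n$. It can also be made independent of the GIT part of the paper:
\begin{enumerate}
\item $\lambda\le0$ for $\af\ne0$ already follows from \eqref{eq:horizontal}, without Corollary~\ref{cor:horizontal-metric}.
\item The steady structural assertion is elementary.
\item The $|\delta H|^2$ term in Lemma~\ref{lem:invariant-scalar} sits on the nonnegative side, so your appeal to Theorem~\ref{thm:harmonicity} in the nilpotent case is superfluous.
\end{enumerate}
Your opening reduction to ``two things'' omits the steady conclusion $D=0$, but you do supply it from Lemma~\ref{lem:invariant-scalar} in both cases.

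\textbf{What the paper's route buys.} It extracts the induced nilsoliton equations $R_\n=\lambda I+D_\n$ and $\rho(D_\n)H=2\lambda H$ for every $\lambda$, including $\lambda<0$, which your argument never examines. This is exactly the input reused in the converse part of Theorem~\ref{thm:extension}.
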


\begin{proof}
Let $\n$ be the nilradical and $\af=\n^\perp$. By Theorem~\ref{thm:harmonicity} and the structural assertion of Theorem~\ref{thm:structure}, $\g=\af\ltimes\n$, $\af$ is abelian, $H\in\Lambda^3\n^*$, and the operators $V_A=\ad_A|_\n$ are commuting and normal. Their action is faithful, since an element of $\af$ acting trivially on $\n$ would be central and hence belong to $\n$. We identify $\af$ with its image in $\Der(\n)$. Equation~\eqref{eq:extension-differentials} and harmonicity give $d_\n H=d_\n^\dagger H=0$ and $\rho(V_A)H=0$ for all $A\in\af$.

Write $S(U)=\Sym(V_U)$ for the mean-curvature vector $U\in\af$. Lemma~\ref{lem:normal-stabilizer} shows that $S(U)$ is a symmetric derivation of $\n$ preserving $H$. Since $D$ is symmetric and preserves $\n$, the endomorphism
\begin{equation}\label{eq:induced-nilsoliton}
D_\n=D|_\n+S(U)
\end{equation}
is a symmetric derivation of $\n$. The vertical block of \eqref{eq:extension-ricci} gives $R_\n:=\Ric_\n-H^2/4=\lambda I+D_\n$. The harmonic torsion equation and $\rho(S(U))H=0$ give $\rho(D_\n)H=2\lambda H$. Thus the induced data on $\n$ form a generalized nilsoliton with the same constant, without any assumption on its sign.

Let $\mu$ be the bracket of $\n$. Nilpotency and \eqref{eq:moment-identity} imply $\tr(\Ric_\n D_\n)=0$ and $\tr\Ric_\n=-\|\mu\|^2/4$. The insertion identity $\langle\rho(D_\n)H,H\rangle=-3\tr(H^2D_\n)$ gives $\tr(H^2D_\n)=-2\lambda|H|^2/3$. Consequently,
\begin{equation}\label{eq:nilsoliton-sign}
\begin{aligned}
\tr R_\n&=-\tfrac14\bigl(\|\mu\|^2+|H|^2\bigr),\\
\|R_\n\|^2
&=\lambda\tr R_\n+\tr(R_\n D_\n)
=-\frac{\lambda}{4}\left(\|\mu\|^2+\frac13|H|^2\right).
\end{aligned}
\end{equation}
If $(\mu,H)\ne(0,0)$, the first identity in \eqref{eq:nilsoliton-sign} shows that $R_\n\ne0$, and the second gives $\lambda<0$.

It remains to consider $\mu=0$ and $H=0$. If $\af=0$, then $\g$ is abelian and the metric equation gives $D=-\lambda I$. Suppose that $\af\ne0$. Equation~\eqref{eq:horizontal-metric-condition} implies $\lambda\le0$. When $\lambda=0$, every $V_A$ is skew-adjoint. Both $\af$ and $\n$ are now abelian, and the Koszul formula gives
\begin{equation}\label{eq:flat-solvable-connection}
\nabla_{A+X}(B+Y)=V_A Y
\qquad(A,B\in\af,\ X,Y\in\n).
\end{equation}
Since $[\g,\g]\subset\n$ and \eqref{eq:flat-solvable-connection} gives $\nabla_X=0$ for $X\in\n$, the curvature is $\operatorname{Rm}(A+X,B+Y)(C+Z)=[V_A,V_B]Z=0$. Thus the metric is flat, and the soliton equation gives $D=0$. These alternatives prove every assertion.
\end{proof}

The flat decomposition in the steady case agrees with Milnor's characterization of flat left-invariant metrics \cite[Theorem~1.5]{Mi1976}. It also gives the following restriction under complete solvability.

\begin{corollary}\label{cor:steady-completely-solvable}
A completely solvable Lie algebra admits a steady algebraic generalized Ricci soliton if and only if it is abelian. Every such solution has $H=D=0$.
\end{corollary}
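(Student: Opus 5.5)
The plan is to prove the two directions separately, using Corollary~\ref{cor:expanding-solvable} and the flat decomposition from its proof.

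\emph{Abelian algebras.} Suppose $\g$ is abelian. Then the data $H=0$, $D=0$, $\lambda=0$ satisfy \eqref{eq:soliton}: the Ricci tensor vanishes, $dH=0$ holds trivially, and $\delta H=0$. So $\g$ admits a steady soliton.

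\emph{Steady implies abelian.} Suppose $\g$ is completely solvable and carries a steady algebraic generalized Ricci soliton $(H,D)$ with $\lambda=0$. Corollary~\ref{cor:expanding-solvable} gives $H=0$, $D=0$ and flatness of the metric. Its proof also gives $\g=\af\ltimes\n$ with both $\n$ and $\af$ abelian. To see that $\n$ is abelian, note that with $\lambda=0$ the sign argument in \eqref{eq:nilsoliton-sign} forces $(\mu,H)=(0,0)$. Moreover, each $V_A=\ad_A|_\n$ is skew-adjoint by Corollary~\ref{cor:horizontal-metric}.

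\emph{The key step.} We now show that complete solvability forces $V_A=0$. By complete solvability, every $\ad_X$ has only real eigenvalues. A skew-adjoint operator has purely imaginary spectrum and is diagonalizable over $\mathbb C$. Hence each $V_A$ is diagonalizable with all eigenvalues zero, so $V_A=0$. Therefore $[\af,\n]=0$. Combined with $[\af,\af]=0$ and $[\n,\n]=0$, this shows that $\g$ is abelian.

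The only point needing care is that the eigenvalues of $V_A$ are among those of $\ad_A$. This holds because $\n$ is $\ad_A$-invariant. Alternatively, one can avoid the decomposition: $\ad_A$ has block form with $\ad_A|_\af=0$, since $[\af,\af]=0$.

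\emph{Conclusion.} Every steady solution on such an algebra has $H=D=0$, again by Corollary~\ref{cor:expanding-solvable}.
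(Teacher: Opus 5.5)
Your proof is correct and follows the paper's argument. You take from the proof of Corollary~\ref{cor:expanding-solvable} that a steady solution has $H=D=0$, abelian $\af$ and $\n$, and skew-adjoint $V_A$, then use the real spectra forced by complete solvability to get $V_A=0$, with the converse given by the trivial data on an abelian algebra. Your extra remarks, that the spectrum of $V_A$ lies in that of $\ad_A$ because $\n$ is invariant and that normality gives diagonalizability, make explicit what the paper compresses into one sentence.
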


\begin{proof}
By the proof of Corollary~\ref{cor:expanding-solvable}, a steady solution has $H=D=0$, both $\af$ and $\n$ are abelian, and each $V_A=\ad_A|_\n$ is skew-adjoint. Complete solvability makes its eigenvalues real, so $V_A=0$ for every $A\in\af$. Thus $\g$ is abelian. Conversely, every positive-definite inner product on an abelian Lie algebra gives a steady solution with $H=D=0$.
\end{proof}

\begin{remark}\label{rem:flat-gaussian}
The flat torsion-free exception is necessary. On an abelian metric Lie algebra with $H=0$, the endomorphism $D=-\lambda I$ is a symmetric derivation for every $\lambda\in\R$, and both equations in \eqref{eq:soliton} hold. On the simply connected group, these give the Gaussian expanding, steady and shrinking presentations of the Euclidean metric. In \eqref{eq:self-similar-family}, the automorphisms cancel the scaling, so the resulting flow is stationary. For a nonabelian flat metric Lie algebra with $H=0$, the metric equation still gives $D=-\lambda I$, but the derivation identity forces $\lambda[X,Y]=0$ for all $X,Y\in\g$. Hence only $\lambda=0$ is possible for that Lie algebra presentation.
\end{remark}

\begin{proof}[Proof of Theorem~\ref{thm:extension}]
Suppose first that \eqref{eq:extension-conditions} holds. Since $\lambda<0$ and $S|_\af$ is injective, equation \eqref{eq:extension-metric} defines a positive-definite inner product. Lemma~\ref{lem:normal-stabilizer} gives $[D_\n,A]=0$ for every $A\in\af$, and $S(U)$ is a symmetric derivation preserving $H$. A commuting family of normal operators is simultaneously unitarily diagonalizable after complexification. Consequently, $[A,B^*]=0$ for $A,B\in\af$, and hence $[S(U),A]=0$. Therefore \eqref{eq:extension-derivation} defines a symmetric derivation of $\g$.

Equations \eqref{eq:extension-metric} and \eqref{eq:extension-ricci} give $\Ric_\g-H^2/4=\lambda I+D_\g$. The nilsoliton has $d_\n H=d_\n^\dagger H=0$ by \cite[Theorem~1.1]{ChChZh2026a}. Equation \eqref{eq:extension-differentials} and the condition $\rho(A)H=0$ therefore give $d_\g H=\delta_\g H=0$. Finally,
\begin{equation}\label{eq:extension-weight}
\rho(D_\g)H
=\rho(D_\n)H-\rho(S(U))H
=2\lambda H.
\end{equation}
This proves both soliton equations.

Conversely, suppose that the orthogonal semidirect product in the statement satisfies \eqref{eq:soliton} with constant $\lambda$. Closedness gives $\rho(A)H=0$ by \eqref{eq:extension-differentials}. The nilradical is $\n$ by the preceding argument. Corollary~\ref{cor:horizontal-metric} therefore gives \eqref{eq:extension-metric}, which forces $S|_\af$ to be injective. The metric equation and \eqref{eq:extension-ricci} then determine $D_\g$ as in \eqref{eq:extension-derivation}.

For the final assertion, start with any expanding algebraic generalized Ricci soliton on a solvable Lie algebra. Theorem~\ref{thm:harmonicity} and the structural assertion of Theorem~\ref{thm:structure} give an orthogonal decomposition $\g=\af\ltimes\n$ with $\af$ abelian, $H\in\Lambda^3\n^*$, and commuting normal actions $A\mapsto\ad_A|_\n$. This action is faithful. Otherwise a nonzero element of $\af$ would be central and hence belong to $\n$. We identify $\af$ with its image in $\Der(\n)$.

The proof of Corollary~\ref{cor:expanding-solvable} shows that the induced data on $\n$ form a generalized nilsoliton with derivation \eqref{eq:induced-nilsoliton}, and that $\rho(A)H=0$ for every $A\in\af$. Equations~\eqref{eq:extension-metric} and~\eqref{eq:extension-derivation} recover the original metric and derivation. This also proves the reconstruction assertion of Theorem~\ref{thm:structure}.
\end{proof}

\begin{corollary}\label{cor:extension-existence}
Fix generalized nilsoliton data $(\n,\langle\cdot,\cdot\rangle,H,\lambda)$ with $\lambda<0$, and set
\begin{equation}\label{eq:symmetric-torsion-stabilizer}
\mathfrak p_H=\{S\in\Der(\n):S=S^*,\ \rho(S)H=0\}.
\end{equation}
For $r\ge1$, an extension in Theorem~\ref{thm:extension} with $\dim\af=r$ exists if and only if $\mathfrak p_H$ contains an $r$-dimensional subspace of pairwise commuting operators. In particular, a nonzero complement exists if and only if $\mathfrak p_H\ne\{0\}$.
\end{corollary}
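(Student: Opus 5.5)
The plan is to derive both directions from Theorem~\ref{thm:extension} and Lemma~\ref{lem:normal-stabilizer}, working with the map $A\mapsto S(A)$.

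\emph{Necessity.} Suppose an extension with $\dim\af=r$ exists, with $\af\subset\Der(\n)$ abelian, normal, $\rho(A)H=0$, and $S|_\af$ injective. By Lemma~\ref{lem:normal-stabilizer}, each $S(A)$ is a symmetric derivation preserving $H$, so $S(A)\in\mathfrak p_H$. Injectivity shows that $S(\af)$ is $r$-dimensional. It remains to see that these operators commute pairwise. A commuting family of normal operators is simultaneously unitarily diagonalizable after complexification, as already used in the proof of Theorem~\ref{thm:extension}. Hence $A$ commutes with $B^*$ for all $A,B\in\af$, and therefore $[S(A),S(B)]=0$.

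\emph{Sufficiency.} Let $\mathfrak s\subset\mathfrak p_H$ be an $r$-dimensional subspace of pairwise commuting operators, and take $\af=\mathfrak s$. This is an abelian subalgebra of $\Der(\n)$. Its elements are symmetric, hence normal, and satisfy $\rho(A)H=0$. The map $S$ restricts to the identity on $\af$, so it is injective. Thus condition \eqref{eq:extension-conditions} holds, and Theorem~\ref{thm:extension} produces the extension. The inner product on $\af$ is \eqref{eq:extension-metric}, which is positive definite because $\lambda<0$.

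\emph{The nonzero case.} A nonzero complement requires a one-dimensional subspace, and any single operator commutes with itself. So the case $r=1$ reduces to $\mathfrak p_H\ne\{0\}$.

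The only step needing any care is the commutation $[S(A),S(B)]=0$ in the necessity direction. It follows from simultaneous diagonalization of commuting normal operators (Fuglede's theorem in finite dimensions). Everything else is a direct citation of the earlier results.
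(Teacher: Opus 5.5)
Your proposal is correct and follows the paper's proof essentially step for step. In one direction, Lemma~\ref{lem:normal-stabilizer} and injectivity of $S|_\af$ give an $r$-dimensional subspace $S(\af)\subset\mathfrak p_H$, which is commuting because commuting normal operators commute with each other's adjoints. Conversely, a commuting subspace of $\mathfrak p_H$ consists of symmetric, hence normal, derivations satisfying \eqref{eq:extension-conditions}, so Theorem~\ref{thm:extension} applies.
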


\begin{proof}
For any admissible $\af$, Lemma~\ref{lem:normal-stabilizer} gives $S(\af)\subset\mathfrak p_H$. The symmetric parts commute because the original operators commute and are normal. Injectivity of $S|_\af$ gives $\dim S(\af)=\dim\af$. Conversely, any commuting subspace of $\mathfrak p_H$ satisfies \eqref{eq:extension-conditions}, so Theorem~\ref{thm:extension} applies.
\end{proof}

For fixed nilsoliton data, the admissible actions in Theorem~\ref{thm:extension} can be compared through the orthogonal stabilizer. For $H=0$ and symmetric actions, this equivalence is \cite[Proposition~5.3]{La2011}. Let
\begin{equation}\label{eq:extension-compact-group}
K_H=\{k\in\Aut(\n)\cap\mathrm O(\n):k^*H=H\}.
\end{equation}
For fixed $(\n,\langle\cdot,\cdot\rangle,H,\lambda)$, two admissible subalgebras $\af_1,\af_2$ give orthogonally isomorphic Lie algebras with three-form if and only if $\af_2=k\af_1k^{-1}$ for some $k\in K_H$. Indeed, such a conjugation preserves the trace metric \eqref{eq:extension-metric} and extends to the required isomorphism. Conversely, an orthogonal Lie algebra isomorphism preserving the three-form preserves the nilradical and its orthogonal complement. Its restriction $k$ to $\n$ belongs to $K_H$, and the bracket identity gives the stated conjugation. This classifies the extensions of the fixed nilsoliton data.

The skew-symmetric parts affect the Lie algebra presentation but can be removed from the simply connected metric and three-form. For ordinary solvsolitons, this follows from \cite[equation~(37) and Corollary~4.10]{La2011}. We verify below that the same change also preserves the three-form.

\begin{proposition}\label{prop:extension-isometry}
In Theorem~\ref{thm:extension}, replace the action of each $A\in\af$ by $S(A)$ and retain the metric on $\af$. The simply connected groups associated with these two semidirect products are isometric by a diffeomorphism preserving the extended three-form. In particular, every simply connected expanding algebraic generalized Ricci soliton in the theorem has a completely solvable presentation.
\end{proposition}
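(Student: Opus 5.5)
The plan is to use the classical Azencott--Wilson / Lauret modification trick. Decompose each $A\in\af$ as $A=S(A)+K(A)$. By Lemma~\ref{lem:normal-stabilizer}, both $S(A)$ and $K(A)$ are derivations of $\n$ preserving $H$. Since $\af$ is an abelian family of normal operators, the operators $S(A),K(B)$ ($A,B\in\af$) all pairwise commute, by simultaneous unitary diagonalization. Let $G_1=\af\ltimes_{A}N$ and $G_2=\af\ltimes_{S(A)}N$ be the two simply connected groups, both with underlying manifold $\af\times N$ via exponential coordinates on the abelian factor. The skew parts integrate to a homomorphism
\[
\kappa:\af\to K_H,\qquad \kappa(A)=\exp(K(A)),
\]
with values in orthogonal automorphisms of $\n$ (hence of $N$) preserving $H$. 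Commutativity gives $\exp(A)=\exp(S(A))\kappa(A)$, with the two factors commuting.

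First, consider the map
\[
\Phi:G_2\to G_1,\qquad (a,n)\mapsto (a,n).
\]
Rather than proving directly that $\Phi$ is an isometry, I would use the standard equivalent formulation: $G_1$ and $G_2$ both embed as simply transitive subgroups of $\mathrm{Isom}(G_1)$. Concretely, the semidirect product $(\af\ltimes_{S}N)\rtimes\kappa(\af)$ acts on $G_1$. Here $\kappa(\af)$ acts by the isometric automorphisms $\mathrm{id}_\af\times\kappa(A)$. These are automorphisms of $G_1$ because $K(A)$ commutes with every $B\in\af$, so $\kappa(A)$ commutes with $\exp(B)$. They are isometries because $\kappa(A)$ is orthogonal on $\n$ and the identity on $\af$, and the metric on $\g$ is the orthogonal sum. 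Second, I check that the left translation by $(a,1)\in G_1$, composed with the automorphism $\kappa(a)^{-1}$, gives a transformation $L'_a$. The set of such $L'_a$ together with left translations by $N$ forms a subgroup isomorphic to $G_2$ that acts simply transitively. Its orbit map at the identity is an isometry $G_2\to G_1$ once we know that the pulled-back metric at $e$ is the given inner product on $\af\oplus\n$; this holds because the derivative of the orbit map at $e$ sends $A\mapsto A$ and $X\mapsto X$.

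Third, for the three-form: left-invariant $H$ on $G_1$ is invariant under left translations, and also under the automorphisms $\kappa(a)$. This follows since $\kappa(a)^*H=H$ on $\n$, and $H$ vanishes on horizontal slots, which $\kappa$ fixes. Hence $H$ is invariant under the whole group generated, in particular under the copy of $G_2$. Its pullback to $G_2$ is therefore left-invariant, with the same value at $e$, namely the extended form $H$. This proves the isometry preserves the three-form. The final assertion then follows: $S(A)$ is symmetric, so it has real eigenvalues, and $\af\ltimes_S\n$ is completely solvable. Moreover, by Theorem~\ref{thm:extension}, with the same $S$-values and hence the same metric \eqref{eq:extension-metric}, it is itself an admissible extension.

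The main obstacle is the bookkeeping in the second step: verifying that the transformations $L'_a$ compose according to the $S$-twisted law. The required identity is
\[
\kappa(a)^{-1}\exp(b)\kappa(a)=\exp(b)\quad\text{on }N,
\qquad \exp(b)=\exp(S(b))\kappa(b).
\]
This holds because the factors commute. I would do this at the Lie algebra level instead: the vector fields $A-K(A)$ (with $K(A)$ the derivation vector field) and $X\in\n$ span a Lie algebra of Killing fields isomorphic to $\af\ltimes_S\n$, and I would integrate it.
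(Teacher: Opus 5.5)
Your argument is correct, but it is organized differently from the paper's. The paper proves directly the statement you chose to bypass, namely that the identity map of $\af\times N$ is an isometry. Left translation identifies $(v,X)\in T_{(a,x)}(\af\times N)$ with $(v,e^{-a}(dL_{x^{-1}})_xX)$ at the identity. The single identity $|e^{-a}Y|=|e^{-S(a)}Y|$, which holds because $e^{-K(a)}$ is orthogonal and commutes with $e^{-S(a)}$, then finishes the metric part. The three-forms agree for the same reason, since $e^{a}$ and $e^{S(a)}$ both preserve $H$.

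You instead realize $G_2$ as a simply transitive group of isometries of $G_1$ through the modified translations $L'_a=L_{(a,e)}\circ\kappa(a)^{-1}$. This is the Azencott--Wilson (or Gordon--Wilson) modification picture. Your bookkeeping closes at the group level without Killing fields. One computes $L'_a(b,y)=(a+b,\Psi_a(y))$, where $\Psi_a$ is the automorphism of $N$ with $d\Psi_a=e^{a}e^{-K(a)}=e^{S(a)}$. Hence $L_{(0,x)}\circ L'_a$ is exactly left multiplication by $(a,x)$ in $G_2$. Your orbit map at the identity is therefore the identity of $\af\times N$, which is the same map the paper checks.

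The paper's route is shorter and needs no equivariance argument for the orbit map. Yours explains why the isometry exists: both groups sit inside $G_1\rtimes\kappa(\af)$, which preserves the metric and $H$. It also makes the three-form assertion a consequence of invariance under that larger group, and it is the viewpoint that persists for more general modifications of simply transitive isometry groups.

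If you follow the Lie-algebra alternative in your last sentence, note two points. Left translations are generated by right-invariant fields, not by $A$ itself. You would also still have to show that the integrated group acts simply transitively. The group-level verification above is therefore the cleaner choice.
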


\begin{proof}
Let $N$ be the simply connected group with Lie algebra $\n$. Use $\af\times N$ as coordinates for both semidirect products, with multiplication $(a,x)(b,y)=(a+b,x\Phi_a(y))$. The differential of $\Phi_a$ at the identity of $N$ is $e^a$ for the original action and $e^{S(a)}$ for the symmetric action. The symmetric and skew-symmetric parts of the commuting normal family commute with each other. Lemma~\ref{lem:normal-stabilizer} shows that both parts are derivations preserving $H$.

For $v\in\af$ and $X\in T_xN$, left translation identifies the tangent vector $(v,X)$ at $(a,x)$ with $(v,e^{-a}(dL_{x^{-1}})_xX)$ at the identity. Hence its squared norm in the original metric is
\begin{equation}\label{eq:extension-isometry-metric}
|v|^2+\bigl|e^{-a}(dL_{x^{-1}})_xX\bigr|^2
=|v|^2+\bigl|e^{-S(a)}(dL_{x^{-1}})_xX\bigr|^2.
\end{equation}
Here $e^{-K(a)}$ is orthogonal and commutes with $e^{-S(a)}$. The right-hand side is the metric for the symmetric action. Moreover, $e^a$ and $e^{S(a)}$ both preserve $H$, so the extended three-forms also agree in these coordinates. The identity map on $\af\times N$ is therefore the required isometry. The symmetric action has real eigenvalues, and adding an inner nilpotent action does not change the diagonal in a simultaneous triangularization. Thus the second Lie algebra is completely solvable.
\end{proof}

As in \cite[Remark~4.5]{La2011}, Theorem~\ref{thm:extension} permits a flat abelian nilradical with $H=0$, any chosen $\lambda<0$ and $D_\n=-\lambda I$.

The identity $H(\af,\g,\g)=0$ gives an obstruction to prescribed invariant cohomology classes. All cohomology groups below are computed in the invariant Chevalley--Eilenberg complex. For nilpotent Lie algebras, the nontriviality of the class of nonzero soliton torsion appears in \cite[Proposition~6.5]{ChChZh2026b}. The restriction to the nilradical gives the following solvable version.

\begin{proposition}\label{prop:cohomology-obstruction}
Let $\g$ be solvable with nilradical $\n$, and let $\operatorname{res}\colon H^3_{\mathrm{CE}}(\g)\to H^3_{\mathrm{CE}}(\n)$ be the restriction map. If $(g,H)$ is an algebraic generalized Ricci soliton and $\operatorname{res}[H]=0$, then $H=0$. Consequently, no nonzero class in $\ker\operatorname{res}$ is represented by the torsion of such a soliton. If $H^3_{\mathrm{CE}}(\n)^{\g/\n}=0$, every algebraic generalized Ricci soliton on $\g$ is an ordinary solvsoliton.
\end{proposition}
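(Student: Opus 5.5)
The plan is to use the structure results already proved: $H\in\Lambda^3\n^*$ and harmonicity. Together with nontriviality of soliton torsion on the nilradical, these should give the claim.

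First I dispose of the non-expanding cases. If $\lambda\ge0$, Corollary~\ref{cor:expanding-solvable} gives $H=0$ directly, so there is nothing to prove. So assume $\lambda<0$. By Theorem~\ref{thm:harmonicity} and the structural assertion of Theorem~\ref{thm:structure}, we have $\g=\af\ltimes\n$, $H(\af,\g,\g)=0$ and $\delta H=0$. By the proof of Corollary~\ref{cor:expanding-solvable}, the restriction $\gamma=H|_{\Lambda^3\n}$ is the torsion of a generalized nilsoliton on $\n$ with constant $\lambda$. Moreover, \eqref{eq:extension-differentials} gives $d_\n\gamma=d_\n^\dagger\gamma=0$.

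Suppose $\operatorname{res}[H]=0$, that is, $\gamma=d_\n\beta$ for some $\beta\in\Lambda^2\n^*$. Then
\[
|\gamma|^2=\langle d_\n\beta,\gamma\rangle=3\langle\beta,d_\n^\dagger\gamma\rangle=0
\]
by \eqref{eq:norm-convention}. Hence $\gamma=0$. Since $H$ vanishes whenever one slot lies in $\af$, it follows that $H=0$. (Equivalently, one could cite \cite[Proposition~6.5]{ChChZh2026b}, but the Hodge argument is self-contained.) The statement that no nonzero class in $\ker\operatorname{res}$ is represented by soliton torsion is then immediate: a representing $H$ would vanish, so the class would be zero.

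For the final assertion, I would show that $\operatorname{res}[H]$ lies in $H^3_{\mathrm{CE}}(\n)^{\g/\n}$. The action of $\g$ on $H^3_{\mathrm{CE}}(\n)$ through $\rho(\ad_X|_\n)$ factors through $\g/\n$, because inner derivations of $\n$ act trivially on cohomology by Cartan's formula. The restriction of a closed $\g$-form is $\g$-invariant in cohomology, again by Cartan's formula $\rho(\ad_X)\omega=-d\iota_X\omega$ when $d\omega=0$. So the restriction map lands in the invariants. If these invariants vanish, then $\operatorname{res}[H]=0$, hence $H=0$ by the first part, and the soliton reduces to $\Ric=\lambda I+D$, an ordinary solvsoliton.

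The main subtlety is this invariance step. It has to be checked with the correct sign convention for $\rho$, and one must make sure the argument applies in all cases, including $\lambda\ge0$. Those cases are already handled above, since $H=0$ there anyway.
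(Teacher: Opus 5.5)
Your proposal is correct and follows the paper's proof. The paper likewise gets $d_\n H=d_\n^\dagger H=0$ from verticality and coclosedness, kills $H|_\n=d_\n\beta$ by pairing it with $H$, and uses the fact that inner derivations act trivially on $H^3_{\mathrm{CE}}(\n)$.

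There are three minor differences.
\begin{itemize}
\item The paper needs no case split, since Theorems~\ref{thm:harmonicity} and~\ref{thm:structure} already hold for every $\lambda$ in the solvable case.
\item The paper obtains invariance at the form level, $\rho(\ad_A|_\n)H=0$, directly from \eqref{eq:extension-differentials}.
\item Your general Cartan-formula argument for invariance also works, but the sign should read $\rho(\ad_X)\omega=d\iota_X\omega$ for closed $\omega$ (compare the proof of Lemma~\ref{lem:invariant-scalar}). This correction is harmless, since only exactness is used.
\end{itemize}
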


\begin{proof}
Set $\af=\n^\perp$. By Theorems~\ref{thm:harmonicity} and \ref{thm:structure}, the form $H$ is vertical and $\delta_\g H=0$. Equation~\eqref{eq:extension-differentials} gives $d_\n H=d_\n^\dagger H=0$. If $\operatorname{res}[H]=0$, write $H|_\n=d_\n B$. Taking the exterior inner product gives
\begin{equation}\label{eq:exact-torsion-vanishing}
\langle H,H\rangle_{\Lambda^3\n^*}
=\langle d_\n B,H\rangle_{\Lambda^3\n^*}
=\langle B,d_\n^\dagger H\rangle_{\Lambda^2\n^*}=0.
\end{equation}
Thus $H=0$ on $\g$, proving the first two assertions.

The adjoint action of $\g$ on $\n$ induces an action on $H^3_{\mathrm{CE}}(\n)$. For $X\in\n$, Cartan's formula gives $\rho(\ad_X)=d_\n\iota_X+\iota_Xd_\n$, so inner derivations act trivially on cohomology. The action therefore factors through $\g/\n$. Since $d_\g H=0$ and $H$ is vertical, equation~\eqref{eq:extension-differentials} gives $\rho(\ad_A|_\n)H=0$ for every $A\in\af$. Hence $[H|_\n]\in H^3_{\mathrm{CE}}(\n)^{\g/\n}$. The last assertion follows from the first.
\end{proof}

\begin{example}\label{ex:cohomology-obstruction}
Let $\g=\mathfrak{aff}(\R)\oplus\R^2$ have basis $(A,X,Z_1,Z_2)$ and the single nonzero bracket $[A,X]=X$. In the dual basis $(a,x,z^1,z^2)$, the only nonzero differential is $dx=-a\wedge x$. Thus
\begin{equation}\label{eq:affine-cohomology}
H^3_{\mathrm{CE}}(\g)=\R[a\wedge z^1\wedge z^2],
\qquad \n=\operatorname{span}\{X,Z_1,Z_2\}.
\end{equation}
Indeed, $a\wedge x\wedge z^i=-d(x\wedge z^i)$, while $x\wedge z^1\wedge z^2$ is not closed. The remaining closed basis three-form is not exact. Its restriction to $\n$ vanishes, so Proposition~\ref{prop:cohomology-obstruction} excludes every nonzero class in \eqref{eq:affine-cohomology}. In particular, every algebraic generalized Ricci soliton on this Lie algebra has zero torsion. This obstruction concerns invariant cohomology. The simply connected group is diffeomorphic to $\R^4$ and has trivial third de Rham cohomology.
\end{example}

The next torsion-free family is classical and appears, up to scaling and a change of basis, in \cite[Section~6.1]{La2011}. We record its curvature and Killing operator for use in the subsequent example.

\begin{example}\label{ex:rotating-einstein}
Fix $a\geq0$ and $b\ne0$. Let $\mathfrak s_{a,b}$ have an orthonormal basis $e_0,e_1,e_2$ and nonzero brackets
\begin{equation}\label{eq:rotating-einstein-brackets}
[e_0,e_1]=ae_1+be_2,
\qquad
[e_0,e_2]=-be_1+ae_2.
\end{equation}
The ideal $\mathfrak v=\operatorname{span}\{e_1,e_2\}$ is abelian, and the action of $e_0$ on $\mathfrak v$ is
$A=\left(\begin{smallmatrix}a&-b\\ b&a\end{smallmatrix}\right)$.
Thus $A$ is normal, $\operatorname{Sym}A=aI$, and $[A,A^*]=0$. Formula \eqref{eq:extension-ricci} gives
\begin{equation}\label{eq:rotating-einstein-curvature}
\operatorname{Ric}(e_0)=-\operatorname{tr}\bigl((\operatorname{Sym}A)^2\bigr)e_0=-2a^2e_0,
\qquad
\operatorname{Ric}|_{\mathfrak v}
=\tfrac12[A,A^*]-(\operatorname{tr}A)\operatorname{Sym}A
=-2a^2I.
\end{equation}
The mixed Ricci components vanish. Therefore $H=0$, $\lambda=-2a^2$, and $D=0$ satisfy both algebraic soliton equations. All closedness and coclosedness conditions are automatic for $H=0$.

Since the eigenvalues of $A$ are $a\pm ib$, the Lie algebra is not completely solvable. If $a>0$, this is an expanding Einstein solvsoliton. If $a=0$, the metric is flat and the soliton is steady. In particular, the latter case gives the standard flat metric on the simply connected group with Lie algebra $\mathfrak e(2)$. The Killing operator is
$B=\operatorname{diag}(2(a^2-b^2),0,0)$, so it has a negative direction whenever $|b|>a$.
\end{example}

\begin{example}\label{ex:rotating-nonzero-torsion}
Fix $a>0$ and $b\ne0$. Take the orthogonal direct product
$\mathfrak g=\mathfrak s_{a,b}\oplus\mathfrak z$, where $\mathfrak z$ is abelian with orthonormal basis $z_1,z_2,z_3$. Let $z^1,z^2,z^3$ denote the corresponding dual forms, extended by zero on $\mathfrak s_{a,b}$, and set
\begin{equation}\label{eq:rotating-torsion-data}
H=\frac{2a}{\sqrt3}\,z^1\wedge z^2\wedge z^3,
\qquad
\lambda=-2a^2,
\qquad
D=0|_{\mathfrak s_{a,b}}\oplus\frac{4a^2}{3}I|_{\mathfrak z}.
\end{equation}
We verify the complete soliton system. The forms $z^j$ are closed, so $dH=0$. Since $\mathfrak z$ is central, $(d\beta)(z_1,z_2,z_3)=0$ for every $\beta\in\Lambda^2\mathfrak g^*$. Therefore $H$ is orthogonal to $d(\Lambda^2\mathfrak g^*)$. Hence $d^\dagger H=0$. The mean-curvature vector is $U=2ae_0$, so $\iota_UH=0$ as well. It follows that $\delta H=d^\dagger H+\iota_UH=0$.

By the product structure and \eqref{eq:rotating-einstein-curvature},
\begin{equation}\label{eq:rotating-torsion-tensors}
\operatorname{Ric}=(-2a^2I)|_{\mathfrak s_{a,b}}\oplus0|_{\mathfrak z},
\qquad
H^2=0|_{\mathfrak s_{a,b}}\oplus\frac{8a^2}{3}I|_{\mathfrak z}.
\end{equation}
The factor $2$ in the contraction of a unit volume three-form is included in the second identity. Subtracting $H^2/4$ gives
\begin{equation}\label{eq:rotating-torsion-metric-equation}
\operatorname{Ric}-\tfrac14H^2
=(-2a^2I)|_{\mathfrak s_{a,b}}\oplus\left(-\frac{2a^2}{3}I\right)|_{\mathfrak z}
=\lambda I+D.
\end{equation}
The endomorphism $D$ is a symmetric derivation because it vanishes on the nonabelian direct factor and is scalar on the central abelian factor. Each slot of $H$ has $D$-weight $4a^2/3$. Consequently,
\begin{equation}\label{eq:rotating-torsion-form-equation}
\rho(D)H=-4a^2H=2\lambda H,
\qquad
 d\delta H=0=2\lambda H-\rho(D)H.
\end{equation}
Thus \eqref{eq:rotating-torsion-data} defines an expanding algebraic generalized Ricci soliton with nonzero harmonic torsion.

Its nilradical is the abelian ideal
$\mathfrak n=\operatorname{span}\{e_1,e_2,z_1,z_2,z_3\}$.
The orthogonal complement is $\mathbb Re_0$, whose action on $\mathfrak n$ is $A\oplus0$ and is normal. The three-form $H$ vanishes whenever one argument lies in $\mathfrak z^\perp$, where $\mathfrak z\subset\mathfrak n$ is central. The Lie algebra is not completely solvable, and its Killing operator again has a negative direction if $|b|>a$. These examples satisfy the original equations directly, including the geometric codifferential in the non-unimodular case.
\end{example}

The following four-dimensional models include those of \cite[Propositions~6.2 and~6.4]{Mu2026}. They illustrate the normal extension criterion and the difference between normal and symmetric actions.

\begin{example}\label{ex:volume-extension}
Let $\n=\operatorname{span}\{x,y,z\}$ be abelian with the displayed basis orthonormal. Fix $p>0$ and $b\in\R$, and put $H=2p\,x^*\wedge y^*\wedge z^*$. Then $H^2=8p^2I_\n$, so the nilsoliton equations hold with $\lambda=-6p^2$ and $D_\n=4p^2I_\n$. Define a derivation $A$ by
\begin{equation}\label{eq:volume-extension-action}
Ax=px+by,\qquad Ay=-bx+py,\qquad Az=-2pz.
\end{equation}
The operator $A$ is normal and has trace zero. Hence it preserves the volume form $H$, and $\tr(S(A)^2)=6p^2$. The trace metric \eqref{eq:extension-metric} makes $A$ a unit vector in $\af=\R A$, and $U=0$. Theorem~\ref{thm:extension} gives a four-dimensional soliton with
\begin{equation}\label{eq:volume-extension-soliton}
\Ric_\g=(-6p^2I)|_\af\oplus0|_\n,
\qquad D_\g=0|_\af\oplus4p^2I|_\n.
\end{equation}
The torsion is harmonic and $\rho(D_\g)H=-12p^2H=2\lambda H$. If $b\ne0$, the adjoint spectrum is not real. The Killing operator has the single possibly nonzero entry $B(A,A)=6p^2-2b^2$, which is negative for $b^2>3p^2$. The center of the extended Lie algebra is zero because $A$ is invertible on $\n$. Thus nonzero torsion and a negative Killing direction can also occur on a Lie algebra with zero center.
\end{example}

\section{Rank and moduli of symmetric extensions}\label{sec:rank-moduli}
In this section, we derive a sharp rank bound and classify symmetric extensions of fixed generalized nilsoliton data.

\begin{theorem}\label{thm:sharp-rank}
Let $(\g,g,H)$ be a solvable algebraic generalized Ricci soliton with $H\ne0$, let $\n$ be its nilradical, and put $m=\dim\n$ and $\af=\n^\perp$. Then
\begin{equation}\label{eq:sharp-rank-bound}
\dim\af\le m-1,\qquad \dim\n\ge\frac{\dim\g+1}{2}.
\end{equation}
Equality holds if and only if, in orthonormal coordinates on $\n$ and after identifying $\af$ with its adjoint action on $\n$, the data are
\begin{equation}\label{eq:sharp-rank-model}
\begin{gathered}
\n=\R^m\text{ abelian},\qquad H=h\,e^1\wedge e^2\wedge e^3,\qquad h>0,\\
\af=\mathfrak d_0:=\{\operatorname{diag}(a_1,\ldots,a_m):a_1+a_2+a_3=0\},\\
\lambda=-\tfrac32h^2,\qquad
\langle A,B\rangle_\af=\frac{2}{3h^2}\tr(AB).
\end{gathered}
\end{equation}
For every $m\ge3$, the model \eqref{eq:sharp-rank-model} is a soliton of dimension $2m-1$. In particular, every equality case has symmetric horizontal action and is completely solvable.
\end{theorem}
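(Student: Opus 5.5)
Since $H\ne0$, Corollary~\ref{cor:expanding-solvable} gives $\lambda<0$. Theorem~\ref{thm:extension} then presents the soliton as $\g=\af\ltimes\n$, with the following properties: $\af\subset\Der(\n)$ is abelian and consists of normal operators, $H\in\Lambda^3\n^*$ satisfies $\rho(A)H=0$ for $A\in\af$, and $S|_\af$ is injective. By Corollary~\ref{cor:extension-existence}, $S(\af)$ is an $r$-dimensional space of pairwise commuting elements of $\mathfrak p_H$, where $r=\dim\af$. Commuting symmetric operators are simultaneously diagonalizable. So I would fix an orthonormal basis $e_1,\dots,e_m$ of $\n$ in which every $S\in S(\af)$ is $\operatorname{diag}(a_1,\dots,a_m)$. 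In this basis, $\rho(S)H=0$ says exactly that $a_i+a_j+a_k=0$ whenever $H_{ijk}=H(e_i,e_j,e_k)\ne0$. Since $H\ne0$, at least one such triple of distinct indices exists, and it gives a nontrivial linear constraint. Hence $r\le m-1$, and then $\dim\g=m+r\le2m-1$ gives the second inequality in \eqref{eq:sharp-rank-bound}.

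For the equality case, assume $r=m-1$. Then $S(\af)$ is a hyperplane of the diagonal matrices, cut out by a single linear form up to scale. Every nonzero component $H_{ijk}$ imposes the form $a_i+a_j+a_k$, and distinct triples give non-proportional forms, so there is exactly one triple. After relabelling and changing signs of basis vectors, $H=h\,e^1\wedge e^2\wedge e^3$ with $h>0$, and $S(\af)=\mathfrak d_0$.

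Next I would show that $\n$ is abelian. Each element of $\mathfrak d_0$ is a derivation. So a nonzero structure constant $\langle[e_i,e_j],e_k\rangle$ would impose the constraint $a_k=a_i+a_j$, or $a_j=0$ if $k=i$. None of these forms is proportional to $a_1+a_2+a_3$, so $\mathfrak d_0$ could not satisfy it, a contradiction.

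It remains to show that the skew parts vanish. Since the family is normal and commuting, each $K(A)$ commutes with every element of $S(\af)=\mathfrak d_0$, as in the proof of Theorem~\ref{thm:extension}. For $m\ge3$, $\mathfrak d_0$ contains a diagonal matrix with pairwise distinct entries. Therefore $K(A)$ is diagonal, and being skew it is zero. Thus $\af=\mathfrak d_0$ acts symmetrically, and the algebra is completely solvable.

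To determine $\lambda$, I would use that $\n$ is abelian, so $\Ric_\n=0$. Also $H^2=2h^2\operatorname{diag}(1,1,1,0,\dots,0)$. Hence $D_\n=-\lambda I-\tfrac{h^2}{2}\operatorname{diag}(1,1,1,0,\dots)$. The weight condition $\rho(D_\n)H=2\lambda H$ reads $3\lambda+\tfrac32h^2=2\lambda$, which gives $\lambda=-\tfrac32h^2$. The horizontal metric \eqref{eq:extension-metric} becomes $\tfrac{2}{3h^2}\tr(AB)$. Conversely, for each $m\ge3$ this abelian nilsoliton and $\af=\mathfrak d_0$ satisfy \eqref{eq:extension-conditions}: $\mathfrak d_0$ consists of commuting symmetric derivations preserving $H$, and $S$ is the identity on it, hence injective. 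So Theorem~\ref{thm:extension} yields a soliton of dimension $2m-1$.

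The main obstacle I expect is the equality characterization. I must make sure that the uniqueness of the triple and the abelianity of $\n$ are genuinely forced by $\dim S(\af)=m-1$, including the degenerate bracket cases $k\in\{i,j\}$, and that the normal skew parts are removed by the commutation argument.
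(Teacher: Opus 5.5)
Your proposal is correct and follows the paper's proof essentially step by step. The shared steps are:
\begin{itemize}
\item simultaneously diagonalizing $S(\af)$, which places it in a proper hyperplane;
\item showing that the triple is unique when $\dim\af=m-1$;
\item using the derivation weights of $\mathfrak d_0$ to force $\mu=0$;
\item killing the skew parts by a regular element of $\mathfrak d_0$;
\item reading off $\lambda=-\tfrac32h^2$ from the torsion weight equation, before invoking Theorem~\ref{thm:extension} for the converse.
\end{itemize}
The only cosmetic difference is how you show that the bracket constraint $a_k-a_i-a_j$ (or $a_j$) is not proportional to $a_1+a_2+a_3$: you use its sign pattern, whereas the paper uses the fact that its integer coefficients sum to $-1$.
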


We now prove the dimension bound and determine all equality cases.

\begin{proof}[Proof of Theorem~\ref{thm:sharp-rank}]
Since $H\ne0$, Corollary~\ref{cor:expanding-solvable} gives $\lambda<0$. Use Theorem~\ref{thm:extension} to identify $\af$ with its faithful normal action on $\n$. In particular, $H\in\Lambda^3\n^*$ and $m\ge3$. By Lemma~\ref{lem:normal-stabilizer}, $\mathfrak b:=S(\af)$ consists of commuting symmetric derivations preserving $H$, and $\dim\mathfrak b=\dim\af$. Choose an orthonormal basis $e_1,\ldots,e_m$ simultaneously diagonalizing $\mathfrak b$. If $H(e_i,e_j,e_k)\ne0$ with $i<j<k$, then $\rho(B)H=0$ for $B=\operatorname{diag}(b_1,\ldots,b_m)\in\mathfrak b$ gives $b_i+b_j+b_k=0$. Thus $\mathfrak b$ lies in a proper hyperplane of the diagonal algebra, so $\dim\af\le m-1$. Since $\dim\g=m+\dim\af$, the two inequalities in \eqref{eq:sharp-rank-bound} are equivalent.

Suppose that $\dim\af=m-1$. Relabel the basis so that $H(e_1,e_2,e_3)\ne0$. Then $\mathfrak b=\mathfrak d_0$. Any other nonzero coefficient $H(e_i,e_j,e_k)$ with $i<j<k$ would force the linear functional $b_i+b_j+b_k$ to vanish on $\mathfrak d_0$. Its coefficient vector must therefore be a multiple of $(1,1,1,0,\ldots,0)$. Since both coefficient vectors have three entries equal to one and all others zero, the triples agree. Changing the sign of one basis vector if necessary gives $H=h\,e^1\wedge e^2\wedge e^3$ with $h>0$.

Write $\mu_{ij}^k=\langle[e_i,e_j],e_k\rangle$. Because every element of $\mathfrak d_0$ is a derivation, a nonzero coefficient $\mu_{ij}^k$ would imply
\begin{equation}\label{eq:rank-equality-bracket-weight}
b_k-b_i-b_j=0\qquad
\text{for all }(b_1,\ldots,b_m)\text{ with }b_1+b_2+b_3=0.
\end{equation}
The coefficient vector on the left has integer entries with sum $-1$. If it were a multiple of $(1,1,1,0,\ldots,0)$, that multiple would be $-1/3$, which is impossible. Equation \eqref{eq:rank-equality-bracket-weight} therefore forces $\mu=0$.

It remains to recover the original normal action, including its skew-symmetric parts. Commuting normal operators commute with one another's adjoints. Hence $K(A)$ commutes with every element of $S(\af)=\mathfrak d_0$ for each $A\in\af$. The matrix $\operatorname{diag}(1,2,-3,4,\ldots,m)$ belongs to $\mathfrak d_0$ and has distinct eigenvalues, with the last entries omitted when $m=3$. Any commuting matrix is diagonal in this basis, so its skew-symmetric centralizer is zero. Therefore $K(A)=0$ for every $A$, and the original action is symmetric and equals $\mathfrak d_0$.

Let $P$ be the orthogonal projection onto $\operatorname{span}\{e_1,e_2,e_3\}$. Since $\mu=0$ and $H=h\,e^1\wedge e^2\wedge e^3$, one has $R_\n=-h^2P/2$. The induced nilsoliton derivation is $D_\n=R_\n-\lambda I$. Its torsion weight equation gives $3(h^2/2+\lambda)=2\lambda$, so
\begin{equation}\label{eq:rank-equality-nilsoliton}
\lambda=-\tfrac32h^2,\qquad
D_\n=h^2P+\tfrac32h^2(I-P).
\end{equation}
The horizontal metric in \eqref{eq:sharp-rank-model} follows from \eqref{eq:extension-metric}.

Conversely, for any $m\ge3$ and $h>0$, the abelian nilpotent data with $H=h\,e^1\wedge e^2\wedge e^3$ satisfy the nilsoliton equations with \eqref{eq:rank-equality-nilsoliton}. Every element of $\mathfrak d_0$ is symmetric and preserves $H$. Theorem~\ref{thm:extension} therefore gives precisely \eqref{eq:sharp-rank-model}, with full nilradical $\R^m$. For $A\in\mathfrak d_0$, we have $\tr(PA)=0$, so the trace metric gives $U=\tfrac32h^2(I-P)\in\mathfrak d_0$. The full soliton derivation is therefore zero on $\af$ and equals $h^2P$ on $\n$. This also verifies the model directly, including $m=3$, when $U=0$.
\end{proof}

For classical solvsolitons, Will gives the Grassmannian quotient and identifies its fibres through a finite Weyl group in \cite[Section~3, equation~(5)]{Wi2011}. That construction applies in every dimension. We extend it to the simultaneous stabilizer of the nilpotent bracket and the three-form. We use $\mathfrak p_H$ and $K_H$ from \eqref{eq:symmetric-torsion-stabilizer} and \eqref{eq:extension-compact-group}. The nilsoliton data and $\lambda$ are fixed, and the equivalence is orthogonal Lie algebra isomorphism preserving $H$.

\begin{theorem}\label{thm:all-rank-moduli}
Fix generalized nilsoliton data $(\n,\langle\cdot,\cdot\rangle,H,\lambda)$ with $\lambda<0$. Let $\mathfrak t$ be a maximal abelian subspace of $\mathfrak p_H$, and put $r=\dim\mathfrak t$. Define
\begin{equation}\label{eq:extension-weyl-group}
W_H=N_{K_H}(\mathfrak t)/Z_{K_H}(\mathfrak t),
\end{equation}
where $N_{K_H}(\mathfrak t)$ preserves $\mathfrak t$ under conjugation and $Z_{K_H}(\mathfrak t)$ fixes it pointwise. Then $W_H$ is finite. For each $0\le k\le r$, the $k$-dimensional symmetric extensions in Theorem~\ref{thm:extension}, modulo orthogonal Lie algebra isomorphisms preserving the three-form, are parametrized by
\begin{equation}\label{eq:all-rank-moduli}
\operatorname{Gr}_k(\mathfrak t)/W_H.
\end{equation}
With the quotient topology induced by the commuting subspaces of $\operatorname{Gr}_k(\mathfrak p_H)$, this parametrization is a homeomorphism. The moduli space is compact and has covering dimension $k(r-k)$.
\end{theorem}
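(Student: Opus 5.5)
The plan is to reduce to the standard compact Lie group picture for the pair $(K_H,\mathfrak p_H)$. First I would show that $\mathfrak k_H\oplus\mathfrak p_H$ is a reductive Lie algebra stable under transpose. Here $\mathfrak k_H$ denotes the skew derivations preserving $H$, which is the Lie algebra of $K_H$. Concretely, I would consider $\mathfrak g_H=\{A\in\Der(\n):\rho(A)H=0\}$ and the moment-map identity \eqref{eq:joint-moment} for $v=(\mu,H/\sqrt3)$. The stabilizer of $v$ in $\mathrm{GL}(\n)$ is an algebraic group with Lie algebra $\mathfrak g_H$. Because $v$ is a critical point of the moment map for the diagonal action, namely $R_\n=\lambda I+D_\n$ with $D_\n$ a derivation preserving $H$ up to the scaling, the Kempf--Ness / Marian type result says that the stabilizer of a critical point is self-adjoint. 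Alternatively, I would adapt Lemma~\ref{lem:normal-stabilizer} to show that $\mathfrak g_H^*=\mathfrak g_H$; the identity $\pi(A^*)v=0$ for $A$ in the centralizer of $D_\n$ should suffice. Then $\mathfrak g_H\cap\mathfrak z(D_\n)$ is a transpose-stable real algebraic Lie algebra, with Cartan decomposition $\mathfrak k_H\oplus\mathfrak p_H$ after intersecting with the commutant of $D_\n$. Lemma~\ref{lem:normal-stabilizer} shows that every element of $\mathfrak p_H$ commutes with $D_\n$, so nothing is lost by this intersection. The group $K_H$ is compact, since it is closed in $\mathrm O(\n)$.

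Second, I would invoke the classical theorem that all maximal abelian subspaces of $\mathfrak p$ in a Cartan decomposition are conjugate under the identity component of the compact group. Every commuting subspace of $\mathfrak p_H$ then lies in a maximal one. Consequently every $k$-dimensional commuting subspace is $K_H$-conjugate to an element of $\operatorname{Gr}_k(\mathfrak t)$. Combined with the classification of extensions up to $K_H$-conjugacy recorded after Corollary~\ref{cor:extension-existence}, and with Proposition~\ref{prop:extension-isometry} restricted to symmetric actions, this gives a surjection $\operatorname{Gr}_k(\mathfrak t)\to\{\text{extensions}\}/\!\sim$. For injectivity modulo $W_H$, suppose $\mathfrak b_1,\mathfrak b_2\subset\mathfrak t$ and $k\mathfrak b_1k^{-1}=\mathfrak b_2$. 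Then $\mathfrak t$ and $k\mathfrak t k^{-1}$ are both maximal abelian subspaces of the centralizer $\mathfrak z_{\mathfrak p_H}(\mathfrak b_2)$, which is again a Cartan $\mathfrak p$-part of a transpose-stable reductive subalgebra. Conjugating within $Z_{K_H}(\mathfrak b_2)$, I would obtain $k'\in N_{K_H}(\mathfrak t)$ with the same action on $\mathfrak b_1$. This is the standard Weyl-group argument, and it is the step I expect to be the main obstacle. It requires care because $K_H$ may be disconnected, and the centralizer argument must be carried out within $K_H$, not its identity component.

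Third, finiteness of $W_H$ follows because $N_{K_H}(\mathfrak t)$ is a compact Lie group whose Lie algebra equals that of $Z_{K_H}(\mathfrak t)$. Indeed, $X\in\mathfrak k_H$ with $[X,\mathfrak t]\subset\mathfrak t$ gives $\langle[X,T],T'\rangle$ skew and, via $\tr([X,T]T)=0$ and simultaneous diagonalization, forces $[X,\mathfrak t]=0$. For the topology, the map $\operatorname{Gr}_k(\mathfrak t)\to(\text{commuting subspaces of }\operatorname{Gr}_k(\mathfrak p_H))/K_H$ is a continuous bijection from the compact quotient $\operatorname{Gr}_k(\mathfrak t)/W_H$. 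The target is Hausdorff, being a quotient by a compact group of a closed subset of a compact space. Hence the map is a homeomorphism. The finite quotient preserves covering dimension, so the moduli space has dimension $\dim\operatorname{Gr}_k(\mathfrak t)=k(r-k)$.
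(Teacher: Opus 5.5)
Your proposal is correct. Its skeleton matches the paper's proof: conjugacy of maximal abelian subspaces of $\mathfrak p_H$ under $K_H$, the centralizer trick for injectivity modulo $W_H$, equality of the Lie algebras of $N_{K_H}(\mathfrak t)$ and $Z_{K_H}(\mathfrak t)$, and the argument that a continuous bijection from a compact space to a Hausdorff space is a homeomorphism.

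The real difference is how the reductive structure is obtained. The paper works at group level. It forms $L=\{l\in\mathrm{GL}(\n):l\cdot v=v,\ lR_\n=R_\n l\}$ and proves the polar decomposition $L=K_H\exp(\mathfrak p_H)$ by convexity of $t\mapsto\|e^{t\pi(S)}v\|^2$. It then quotes Proposition~A.1 of \cite{BoLa2020} for $L$ and for the centralizers $L_{\mathfrak b}$. You stay at Lie-algebra level and use the classical conjugacy theorem for the compact connected groups $(K_H)_0$ and $Z_{K_H}(\mathfrak b)_0$.

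Your route is more elementary. Maximizing $k\mapsto\tr(kX'k^{-1}X)$ over the compact group, with $X\in\mathfrak t$, $X'\in\mathfrak t'$ regular, already gives conjugacy. The step you flag as the main obstacle is not one. The correcting element $c$ lies in the identity component of $Z_{K_H}(\mathfrak b_2)$, while your $k$ may lie in any component of $K_H$, so disconnectedness causes no trouble. The paper's route buys a group-level real reductive structure that plugs directly into the cited GIT framework.

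One statement in your first step is false, though you do not end up relying on it. The stabilizer of a critical point is not self-adjoint in general, and $\mathfrak g_H^*\ne\mathfrak g_H$ in general. For the Heisenberg nilsoliton with $H=0$, the map $e_1\mapsto e_3$ is a derivation whose transpose is not.

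What is true is that the part commuting with $R_\n$ is transpose-stable. For $A\in\mathfrak g_H$ one has $\|\pi(A^*)v\|^2=4\tr([R_\n,A]A^*)$, which vanishes when $[A,R_\n]=0$. That is the object you actually use, so the argument survives.

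Even this moment-map input is unnecessary for the Lie-theoretic part. The space $\mathfrak k_H\oplus\mathfrak p_H=\mathfrak g_H\cap\mathfrak g_H^*$ is a transpose-stable subalgebra for any $(\mu,H)$. Brackets of skew and symmetric elements of the Lie algebra $\mathfrak g_H$ land in the correct summands. The nilsoliton hypothesis enters only through the identification of the moduli space with $\mathcal C_k/K_H$.
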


We now prove Theorem~\ref{thm:all-rank-moduli}. The polar decomposition reduces the classification of extensions of every dimension to a finite group action on a Grassmannian.

\begin{proof}[Proof of Theorem~\ref{thm:all-rank-moduli}]
Put $v=(\mu,H/\sqrt3)$ and consider the closed group
\begin{equation}\label{eq:extension-reductive-group}
L=\{l\in\mathrm{GL}(\n):l\cdot v=v,\ lR_\n=R_\n l\}.
\end{equation}
The action is change of basis on the bracket and the three-form, with differential $\pi=\Theta\oplus\rho$ as in Lemma~\ref{lem:normal-stabilizer}. For $l\in L$, write $l=qe^S$, where $q$ is orthogonal and $S=S^*$. Since $R_\n$ is symmetric and commutes with $l$, it also commutes with $l^*$, and hence with both polar factors. Set $f(t)=\|e^{t\pi(S)}v\|^2$. Equation~\eqref{eq:joint-moment} gives $f'(0)=8\tr(R_\n S)$. At $t=1$, the vector is $q^{-1}\cdot v$. Orthogonal equivariance gives its curvature operator as $q^{-1}R_\n q=R_\n$, so the same formula applies to $f'(1)$. Since $\pi(S)$ is self-adjoint, the convexity calculation of \cite[Lemma~5.1]{BoLa2020} yields
\begin{equation}\label{eq:extension-polar-convexity}
f'(0)=f'(1)=8\tr(R_\n S),\qquad
f''(t)=4\|\pi(S)e^{t\pi(S)}v\|^2\ge0.
\end{equation}
Integrating the nonnegative second derivative in \eqref{eq:extension-polar-convexity} from $0$ to $1$ gives $\pi(S)v=0$. Therefore $e^{tS}\cdot v=v$ for every $t\in\R$. Since $[S,R_\n]=0$, we have $e^{tS}\in L$ for every $t$, and hence $S\in\operatorname{Lie}(L)$. Also $q\cdot v=v$. Lemma~\ref{lem:normal-stabilizer} shows that the symmetric part of $\operatorname{Lie}(L)$ is exactly $\mathfrak p_H$. Orthogonal equivariance of $R_\n$ shows that $L\cap\mathrm O(\n)=K_H$. Consequently,
\begin{equation}\label{eq:extension-reductive-polar}
L=K_H\exp(\mathfrak p_H).
\end{equation}
Thus $L$ is a real reductive group in the sense of \cite[Section~1]{BoLa2020}. These assertions include $v=0$, when $R_\n=0$, $L=\mathrm{GL}(\n)$, and $\mathfrak p_H=\Sym(\n)$. By \cite[Proposition~A.1]{BoLa2020}, all maximal abelian subspaces of $\mathfrak p_H$ are $K_H$-conjugate. In particular, $r$ is the largest dimension of a commuting subspace of $\mathfrak p_H$.

We first prove that the group in \eqref{eq:extension-weyl-group} is finite. Suppose that $X$ belongs to the Lie algebra of $N_{K_H}(\mathfrak t)$. Then $X^*=-X$ and $[X,A]\in\mathfrak t$ for every $A\in\mathfrak t$. For $A,B\in\mathfrak t$, we have
\begin{equation}\label{eq:extension-normalizer-lie}
\langle[X,A],B\rangle
=\tr\bigl(X[A,B]\bigr)=0.
\end{equation}
Here the inner product is the Hilbert--Schmidt inner product. Taking $B=[X,A]$ gives $[X,A]=0$. Hence the normalizer and centralizer have the same Lie algebra. Their quotient is compact and has zero-dimensional Lie algebra, so it is finite.

Set
\begin{equation}\label{eq:commuting-extension-space}
\mathcal C_k=\{\af\in\operatorname{Gr}_k(\mathfrak p_H):[\af,\af]=0\}.
\end{equation}
By the classification following \eqref{eq:extension-compact-group}, the required moduli space is $\mathcal C_k/K_H$. Every $\af\in\mathcal C_k$ is contained in a maximal abelian subspace of $\mathfrak p_H$. Equation \eqref{eq:extension-reductive-polar} and \cite[Proposition~A.1]{BoLa2020} therefore show that the inclusion $\operatorname{Gr}_k(\mathfrak t)\subset\mathcal C_k$ induces a continuous surjection from \eqref{eq:all-rank-moduli} onto $\mathcal C_k/K_H$.

It remains to identify its fibers. Suppose that $\af,\mathfrak b\subset\mathfrak t$ and $q\af q^{-1}=\mathfrak b$ for some $q\in K_H$. Consider the centralizer
\begin{equation}\label{eq:extension-subspace-centralizer}
L_{\mathfrak b}=\{l\in L:lB=Bl\text{ for every }B\in\mathfrak b\}.
\end{equation}
Since the operators in $\mathfrak b$ are symmetric, the polar factors of any $l\in L_{\mathfrak b}$ also commute with $\mathfrak b$. Thus $L_{\mathfrak b}$ is closed and satisfies the same real reductivity condition. Its orthogonal subgroup is $Z_{K_H}(\mathfrak b)$ and its symmetric Lie algebra part consists of the elements of $\mathfrak p_H$ commuting with $\mathfrak b$.

Both $\mathfrak t$ and $q\mathfrak t q^{-1}$ are maximal abelian subspaces of this symmetric part. Applying \cite[Proposition~A.1]{BoLa2020} to $L_{\mathfrak b}$ gives $c\in Z_{K_H}(\mathfrak b)$ such that
\begin{equation}\label{eq:extension-centralizer-correction}
cq\mathfrak t q^{-1}c^{-1}=\mathfrak t,
\qquad
cq\af q^{-1}c^{-1}=\mathfrak b.
\end{equation}
Hence $cq\in N_{K_H}(\mathfrak t)$, and $\af$ and $\mathfrak b$ lie in the same $W_H$-orbit. This proves injectivity.

The commuting condition in \eqref{eq:commuting-extension-space} is closed, so $\mathcal C_k$ is compact. The quotient $\mathcal C_k/K_H$ is Hausdorff because $K_H$ is compact. The continuous bijection above is therefore a homeomorphism. Finally, the quotient projection from $\operatorname{Gr}_k(\mathfrak t)$ is a continuous open surjection, with each fibre containing at most $|W_H|$ points. Both spaces are compact metrizable, hence paracompact Hausdorff. Their covering dimensions agree by \cite[Theorem~4.1]{Na1960}, giving $k(r-k)$. This argument permits ineffective actions and includes $k=0$ and $k=r$.
\end{proof}

\begin{example}\label{ex:compact-stabilizer}
Let $\n=\R^7$ be abelian with its standard inner product, and use the notation $e^{ijk}=e^i\wedge e^j\wedge e^k$. Consider the standard $G_2$ three-form of \cite[Section~2.1, equation~(2.1)]{Br2006},
\begin{equation}\label{eq:compact-stabilizer-form}
H=e^{123}+e^{145}+e^{167}+e^{246}-e^{257}-e^{347}-e^{356}.
\end{equation}
Each index occurs in three summands and distinct contractions are orthogonal, so $H^2=6I$. Since $\n$ is abelian, $dH=\delta H=0$. The nilsoliton equations hold with $\lambda=-9/2$ and $D_\n=3I$, since $\Ric_\n-H^2/4=-3I/2=\lambda I+D_\n$ and $\rho(3I)H=-9H$.

For a symmetric matrix $S=(s_{ij})$, the standard norm identity in \cite[Section~2.6]{Br2006} takes the following form in our insertion convention. Let $S^\flat(X,Y)=\langle SX,Y\rangle$. If $i_H$ denotes Bryant's map on symmetric two-tensors, then $\rho(S)H=-i_H(S^\flat)/2$. Write $S=S_0+(\tr S)I/7$, with $\tr S_0=0$. The identities $i_H(\langle\cdot,\cdot\rangle)=6H$ and $\|i_H(S_0^\flat)\|_\wedge^2=8\|S_0\|^2$, together with orthogonality of the two components and $\|H\|_\wedge^2=7$, give
\begin{equation}\label{eq:compact-stabilizer-direct}
\|\rho(S)H\|_\wedge^2
=2\sum_i s_{ii}^2+4\sum_{i<j}s_{ij}^2+\left(\sum_i s_{ii}\right)^2
=2\|S\|^2+(\tr S)^2.
\end{equation}
Thus \eqref{eq:compact-stabilizer-direct} gives $\mathfrak p_H=0$. This also follows from the classical fact that the full linear stabilizer of $H$ is the compact group $G_2\subset\mathrm{SO}(7)$, as described in \cite[Sections~2.1--2.2]{Br2006}. By Corollary~\ref{cor:extension-existence}, these data admit no extension in Theorem~\ref{thm:extension} with $\dim\af>0$. Equivalently, they cannot occur as the full induced nilradical data of an algebraic generalized Ricci soliton on a nonnilpotent solvable Lie algebra. This obstruction concerns extensions with $\n$ remaining the nilradical.
\end{example}

\section*{Acknowledgements}
This work was supported by the National Natural Science Foundation of China (12571025 and 12131012) and the Natural Science Research of Jiangsu Education Institutions of China (No. 23KJB110016).

\medskip
\noindent\textbf{AI Assistance.} Generative AI tools (OpenAI ChatGPT Astra and Codex) were used under the authors' direction to assist with mathematical exploration, drafting, literature searches, and internal review of this work. The authors are responsible for independently checking the mathematical arguments and references in the final manuscript, make all final decisions concerning its contents, and take full responsibility for the paper.

\end{document}